\documentclass{amsart}

\usepackage[english]{babel}
\usepackage{array,booktabs,float}
\usepackage{amsmath,amssymb,amsfonts}
\usepackage{amsthm}
\usepackage{bm}
\usepackage{amscd}
\usepackage{mathtools}
\usepackage{mathrsfs}
\usepackage{pb-diagram}
\usepackage{booktabs}

\theoremstyle{definition}%定理環境中のアルファベットを立体で表示
\newtheorem{definition}{Definition}[section]%定理番号を 節.番号 にする
\newtheorem*{definition*}{Definition}
\newtheorem{theorem}[definition]{Theorem}
\newtheorem*{theorem*}{Theorem}
\newtheorem{proposition}[definition]{Proposition}
\newtheorem*{proposition*}{Proposition}
\newtheorem{lemma}[definition]{Lemma}
\newtheorem*{lemma*}{Lemma}
\newtheorem{remark}[definition]{Remark}
\newtheorem*{remark*}{Remark}

\newtheorem*{example*}{Example}

\newtheorem*{problem*}{Problem}
\newtheorem{corollary}[definition]{Corollary}
\newtheorem*{corollary*}{Corollary}

%自然数
\newcommand{\Z}{\mathbb{Z}}%整数
%有理数
\newcommand{\R}{\mathbb{R}}%実数
\newcommand{\C}{\mathbb{C}}%複素数
\newcommand{\q}{\mathbb{H}}
\newcommand{\ind}{\text{ind}}
\newcommand{\Slash}[1]{{\ooalign{\hfil/\hfil\crcr$#1$}}}

\begin{document}

\title[Localization of a $KO^{\ast}(\text{pt})$-valued index ]{Localization of a $KO^{\ast}(\text{pt})$-valued index and the orientability of the $Pin^-(2)$ monopole moduli space}
\author{Jin Miyazawa}
\address{Graduate School of Mathematical Sciences, the University of Tokyo, 3-8-1 Komaba, Meguro, Tokyo 153-8914, Japan}
\email{miyazawa@ms.u-tokyo.ac.jp}
%\author{Jin Miyazawa}\thanks{\texttt{miyazawa@ms.u-tokyo.ac.jp}}
%\date{\today}
\maketitle
\begin{abstract}
It is known that the Dirac index of a $Spin^c$ structure is localized to the characteristic submanifold. 
We introduce the notion of $G^{\pm}(n,s^+,s^-)$ structure on a manifold as a common generalization of the $Spin^c$ structure and the $H_n(s)$ structure defined by D.~Freed--M.~Hopkins,  
and formulate a version of characteristic submanifold for the $G^{\pm}(n,s^+,s^-)$ structure. We show that the $KO^*(pt)$-valued index associated with the $G^{\pm}(n,s^+,s^-)$ structure is localized to the characteristic submanifold. 
As an application, we give a topological sufficient condition for the moduli space of $Pin^-(2)$ monopoles to be orientable.

\end{abstract}
\tableofcontents
\section{Introduction}%intro
In this paper, we introduce the notion of $G^{\pm}(n,s^+,s^-)$ structure, which is a generalization of the $Spin$ structure, the $Spin^c$ structure, the $Pin^{\pm}$ structure, and the $H_s(n)$ structure due to Freed and Hopkins \cite{FreedHopkins1}. We construct an elliptic differential operator associated with the $G^{\pm}(n,s^+,s^-)$ structure $\mathfrak s$, and we have its index $\ind(\mathfrak s)$ with values in $KO^{s^- -n-s^+}(pt)$. 
The index $\ind(\mathfrak s)$ is a generalization of the Atiyah-Milnor-Singer invariant which is defined by the Dirac type operator with the Clifford action for a $Spin$ manifold. 
The index $\ind(\mathfrak s)$ is also a generalization of the index of the $H_s(n)$ structure with values in $KO^{-n-s}(pt)$ defined by Freed and Hopkins. 

Our main theorem is that the index above is localized to a certain submanifold which is a generalization of characteristic submanifold of the $Spin^c$ structure (Theorem \ref{main}). 
Many people give generalizations of the localization of the index of the $Spin^c$ structure to the characteristic submanifolds. For example, W.~Zhang~\cite{Zh93}, J.L.~Fast--S.~Ochanine~\cite{FO04}, M.~Furuta--Y.~Kametani~\cite{FK00}, S.~Hayashi~\cite{Hayashi}. 
The methods of the localization of the index in the works of \cite{FO04} and in \cite{Hayashi} are to localize the topological index by using the excision theorem of $K$ or $KO$ theory respectively.  
Our method of the localization is based on a version of the Witten deformation, which is introduced by E. Witten \cite{Witten2} in 1982. The Witten deformation is an analytical counterpart of the excision. 

The main application of the main theorem is to give sufficient condition for the $Pin^-(2)$ monopole moduli space to be orientable, which enables us to refine the $Pin^-(2)$ monopole invariant. The $Pin^-(2)$ monopole invariant is a variant of the Seiberg-Witten invariant introduced by N. Nakamura (\cite{Nakamura1}, \cite{Nakamura2}). The orientability of the moduli spaces in gauge theory was originally studied for instanton by Donaldson~(\cite{Donaldson-ori},~\cite{Donaldson2}). 
Donaldson's argument can be applied in the case of the singular instantons introduced by P.B.~Kronheimer--T.S.~Mrowka~\cite{kronheimer2011knot} and in the case of the $U(n)$ instantons introduced by Kronheimer~\cite{kronheimer2005four}. 
In these cases, the moduli spaces are orientable. On the other hand, we show that the $Pin^-(2)$ monopole moduli space may be non-orientable. Strictly speaking, we have an explicit example of a $4$-manifold for which the determinant bundle on the ambient space of the moduli space is non-trivial (Corollary \ref{exexample}). We expect that our new method using the Witten deformation could be applied to other moduli spaces in gauge theory. Recently,  Joyce, Tanaka and Upmeire \cite{Joyce-Tanaka-Upmire} give a new framework to deal with the orientation of moduli spaces. It is an interesting problem to understand the relation between their argument and ours. 

This paper is organized as follows. In Section $2$, we establish our conventions and define the index of the $G^{\pm}(n,s^+,s^-)$ structure. In Section $3$, we first formulate the main theorem and give a proof in the rest of the section. The proof of the analytical detail of  the key localization is postponed to Appendix. In Section $4$, we describe two examples in details: Freed--Hopkins' $H_s(n)$ structure, and the $G^{+}(5,0,4)$ structure which we use in the next section. In Section $5$, as an application, we give a topological sufficient condition for the $Pin^-(2)$ monopole moduli space to be orientable and we give an example of a $4$-manifold for which the determinant bundle on the ambient space which contains the $Pin^-(2)$ monopole moduli space is non-trivial. 

\subsection{Acknowledgements}
This paper is based on the master thesis of the author. The author would like to show his deep appreciation to his adviser Mikio Furuta for helpful suggestions, useful discussions, and continuous encouragement during this work. The theme in this thesis is suggested by him. The author also thanks to Nobuhiro Nakamura for sending the note \cite{Nakamuranote} to him and for for useful discussions. The author thanks to Kiyonori Gomi for noticing the paper of Freed--Hopkins and had a discussion with him. The author thanks to Hokuto Konno and Masaki Taniguchi for helpful suggestions and continuous encouragement. The author was supported by JSPS KAKENHI Grant Number 21J22979 and WINGS-FMSP program at the Graduate school of Mathematical Science, the University of Tokyo.

\section{Definition of the index}
\subsection{The $G^{\pm}(n,s^+,s^-)$ structures} 
Our purpose here is to establish our notations and conventions. We follow the notations of \cite{LM89} for Clifford algebras and follow the definition of \cite{AtiyahSinger1} for $KO$ groups. 

\begin{itemize}\label{conv}

\item Let $Q_r$ denote the standard metric on $\R^r$. Let $Q$ denote the quadratic form on $\R^l \oplus \R^m$ defined by\[Q=Q_l-Q_m. \] We will denote by $Cl_{(l,m)}$ the Clifford algebra generated by $\{ v \in \R^l \oplus \R^m \}$ subject to $v^2=Q(v)$. We abbreviate $Cl_{(n,0)}$ and $Cl_{(0,n)}$ to $Cl_n$ and $Cl_{-n}$ respectively. We write $\epsilon_1, \dots \epsilon_l, e_1, \dots e_m$ for the standard generators of $Cl_{l,m}$. Here $\hat{\otimes}$ denotes the $\Z/2\Z$ graded tensor product. Then there is an isomorphism $Cl_{(l_1, m_1)} \hat{\otimes} Cl_{(l_2, m_2)} \cong Cl_{(l_1+l_2, m_1+m_2)}$.

\item Let $(Cl_{(l,m)})^0$ denote the even part of $Cl_{(l,m)}$ and $(Cl_{(l,m)})^1$ the odd part of $Cl_{(l,m)}$.

\item Let $X$ be a compact hausdorff space. A representative element of $KO^{(b,a)}(X)$ is given by a four-tuple :
\begin{enumerate}
\item The Clifford algebra $Cl_{(b,a)}$.
\item  A separable Hilbert space with a left $Cl_{(b,a)}$ action. 
\item  A continuous map $s \colon X \to \hom(H,H)$ (the topology of $\hom(H,H)$ is the operator norm) such that $s(x)$ is a bounded skew adjoint and Fredholm operator which anti-commutes with the $Cl_{(b,a)}$ action for all $x \in X$.  
\item A map $\epsilon \colon H \to H$ such that $\epsilon^2=1$ and anti-commutes with $s$ and the generators of $Cl_{(b,a)}$. 
\end{enumerate}
We will write $(s,\epsilon,Cl_{(b,a)},H)$ for this four-tuple. 

\item We define $(s,\epsilon,Cl_{(b,a)},H)$ and $(s',\epsilon',Cl_{(b,a)},H')$ are equivalent if they satisfy the following properties:

\begin{itemize}
\item There exist four tuples $(s_0,\epsilon_0, Cl_{(b,a)},H_0)$ and $(s_1,\epsilon_1,Cl_{(b,a)},H_1)$ such that $s_0, s_1$ are isomorphism. 
\item There exists an isometric linear map $f\colon H \oplus H_0 \to H' \oplus H_1$ such that $f \circ (\epsilon \oplus \epsilon_0)f^{-1}=(\epsilon' \oplus \epsilon_1)$ and $f \circ (s \oplus s_0) \circ f^{-1}$ and $s' \oplus s_1$ are homotopic through continuous maps from $X$ to $\hom(H' \oplus H_1, H' \oplus H_1)$ which anti-commute with $\epsilon' \oplus \epsilon_1$ and the $Cl_{(b,a)}$ action. 
\item The homotopy above anti-commutes with $\epsilon' \oplus \epsilon_1$ and Clifford action. 
\end{itemize}

\item If $s$ is an unbounded skew-adjoiont Fredholm operator and 
\[\tilde s= \frac{s}{\sqrt{1+ss^*}}\] 
satisfies the above properties, we write $(s,\epsilon,Cl_{(b,a)},H)$ instead of $(\tilde s,\epsilon,Cl_{(b,a)},H)$. 

\item We will denote by $(\rho_1, V_1)$ the $\Z/2\Z$ graded representation of $Cl_{(1,1)}$ which is given as follows. Let $V_1^0 =V_1^1 =\R, V_1 = V_1^0 \oplus V_1^1$ and \[\rho(\epsilon)=\begin{pmatrix} & 1 \\ 1 & \end{pmatrix}, \; \rho(e)=\begin{pmatrix} & -1 \\ 1 & \end{pmatrix}, \]  where $\epsilon, e (\epsilon^2=1, e^2=-1)$ are the generators of $Cl_{(1,1)}$. 

\item We will denote by $V_n :=V_1^{n\hat{\otimes}}$ and define $(\rho_n, V_n)$ to be the representation introduced by the natural isomorphism $Cl_{(n,n)} \cong Cl_{(1,1)}^{n\hat{\otimes}}$. Let $V^*_n$ denote the dual space of $V_n$ which has a natral right $Cl_{(n,n)}$ module. 
We will denote by $\epsilon^n$ the grading operator. 
\end{itemize}

\begin{lemma}\label{H=H'V}
Assume $a \ge b$. Let $(s,\epsilon,Cl_{(b,a)},H)$ be a representative element of $KO^{(b,a)}(X)$. Then there exists an element $(s',\epsilon',Cl_{(0,a-b)},H')$ of $KO^{(0.a-b)}(X)$ which satisfies the following properties : There is a isomorphism between the Hilbert spaces
\[f \colon H \to H' \hat{\otimes} V^*_{b}\]
such that $s=f \circ s' \otimes \epsilon^b \circ f^{-1}$, $\epsilon=f \circ \epsilon' \otimes \epsilon^b \circ f^{-1}$, $\epsilon_i=f \circ 1 \otimes \epsilon_i \circ f^{-1}$ (for $i=1, \dots, b$), $e_i=f \circ 1 \otimes e_i \circ f^{-1}$ (for $i=1, \dots, b$), and $e_{b+i} = f \circ e'_i \otimes \epsilon^b \circ f^{-1}$ (for $i=1, \dots, a-b$) where 
$e'_{1}, \dots, e'_{a-b}$ are the generators of $Cl_{(0,a-b)}$ and $\epsilon_1, \dots, \epsilon_b, e_1, \dots,e_b, e_{b+1}, \dots, e_{a}$ are the generators of $Cl_{(b,a)}$. 
\end{lemma}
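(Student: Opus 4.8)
The plan is to build the reduction $KO^{(b,a)} \to KO^{(0,a-b)}$ by "absorbing" the $b$ pairs of Clifford generators $(\epsilon_i, e_i)$, $i = 1, \dots, b$, into a tensor factor modeled on $V^*_b$. The key algebraic input is that the pair $(\epsilon_i, e_i)$ generates a copy of $Cl_{(1,1)}$, and $Cl_{(1,1)} \cong \mathrm{End}(V_1)$ (a $2\times 2$ matrix algebra, $\Z/2\Z$-graded) acting irreducibly on $V_1$; iterating, $Cl_{(b,b)} \cong \mathrm{End}(V_b)$ acts irreducibly on $V_b$ with grading $\epsilon^b$. So the first step is to note that $H$ carries a left action of the subalgebra $Cl_{(b,b)} \subset Cl_{(b,a)}$ generated by $\epsilon_1, \dots, \epsilon_b, e_1, \dots, e_b$, hence by the structure of modules over a graded matrix algebra there is a canonical graded isomorphism $f \colon H \xrightarrow{\ \sim\ } H' \hat{\otimes} V^*_b$, where $H' := \mathrm{Hom}_{Cl_{(b,b)}}(V_b, H)$ is the multiplicity space, and the $Cl_{(b,b)}$-action on the right-hand side is $1 \hat\otimes (\text{the right } Cl_{(b,b)}\text{-action on } V^*_b)$. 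This is where I would be careful about graded tensor-product sign conventions, since $V^*_b$ is a \emph{right} module and the formulas for $\epsilon_i, e_i$ in the statement are written with the tensor factors in a specific order; getting the Koszul signs consistent so that $\epsilon_i = f \circ (1 \otimes \epsilon_i) \circ f^{-1}$ etc.\ holds on the nose is the routine-but-delicate part.

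Second, I would transport the remaining structure through $f$. Since $\epsilon^b$ squares to $1$, is $Cl_{(b,b)}$-odd in the appropriate sense, and the generators $e_{b+1}, \dots, e_a$ anti-commute with all of $\epsilon_1, \dots, \epsilon_b, e_1, \dots, e_b$, the operators $f \circ e_{b+i} \circ f^{-1}$ must have the form $e'_i \hat\otimes \epsilon^b$ for uniquely determined operators $e'_i$ on $H'$; the anti-commutation relations $(e'_i)^2 = -1$ and $e'_i e'_j = -e'_j e'_i$ ($i \ne j$) then follow from the corresponding relations among the $e_{b+i}$ together with $(\epsilon^b)^2 = 1$ and the fact that $\epsilon^b$ anti-commutes with each $e_i, \epsilon_i$. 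This equips $H'$ with a left $Cl_{(0,a-b)}$-action. The same bookkeeping applied to $s$ and $\epsilon$ — each of which anti-commutes with all generators of $Cl_{(b,a)}$ — forces $f s f^{-1} = s' \hat\otimes \epsilon^b$ and $f \epsilon f^{-1} = \epsilon' \hat\otimes \epsilon^b$ for operators $s', \epsilon'$ on $H'$.

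Third, I would check that $(s', \epsilon', Cl_{(0,a-b)}, H')$ is a legitimate representative of $KO^{(0,a-b)}(X)$: $\epsilon'$ squares to $1$, $s'$ is skew-adjoint, Fredholm (indeed $s'$ is Fredholm because $s = s' \hat\otimes \epsilon^b$ is, $\epsilon^b$ being invertible), continuous in $x \in X$, and $s'$ anti-commutes with $\epsilon'$ and with each $e'_i$. All of these are immediate consequences of the corresponding properties of $s, \epsilon$ once one uses that $\epsilon^b$ is an invertible operator commuting or anti-commuting appropriately. The inner product on $H'$ is the one making $f$ an isometry (equivalently, the natural inner product on the multiplicity space), under which $s'$ is skew-adjoint. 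I do not expect any serious obstacle here — the only real work is the sign-chasing in the first two steps, and everything else is formal transport of structure along the isomorphism $f$.
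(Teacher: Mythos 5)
Your proposal is correct and rests on the same underlying idea as the paper, namely that $Cl_{(b,b)}\cong\mathrm{End}(V_b)$ as a graded matrix algebra forces $H$ to split off a tensor factor modeled on $V_b^*$, with the remaining data $s,\epsilon,e_{b+1},\dots,e_a$ riding along via the $\epsilon^b$ twist. The concrete route differs, though, in a way worth noting. You realize $H'$ abstractly as the multiplicity space $\mathrm{Hom}_{Cl_{(b,b)}}(V_b,H)$ and then must chase Koszul signs to verify the tensor-product formulas. The paper instead defines $H'$ directly as a subspace of $H$: the simultaneous $+1$-eigenspace of the commuting involutions $\epsilon_1 e_1,\dots,\epsilon_b e_b$ (each $(\epsilon_i e_i)^2=1$, and they pairwise commute). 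Because $s$, $\epsilon$, and $e_{b+1},\dots,e_a$ each anti-commute with both $\epsilon_i$ and $e_i$, they \emph{commute} with every $\epsilon_i e_i$ and hence visibly preserve $H'$, so the operators $s',\epsilon',e'_i$ are literally restrictions; the isomorphism $f$ is then read off from the isotypic decomposition and the sign bookkeeping you flag as delicate never really arises. The two models of $H'$ are canonically identified by evaluating a $Cl_{(b,b)}$-map $V_b\to H$ at a basis vector of the one-dimensional simultaneous $+1$-eigenspace inside $V_b$. Your version is more functorial; the paper's is shorter and makes the preservation of $H'$ by the remaining structure immediate.
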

\begin{proof}
The subspace $H' \subset H$ is given by the intersection of $+1$ - eigenspaces of $\epsilon_1 e_1, \dots, \epsilon_b e_b$. The actions of $s, \epsilon$ and $Cl_{(a-b)}$ preserve $H'$. We write $s',\epsilon',e'_{i}$ for restriction of the actions of them to $H'$. Then the existence of $f$ is straightforward. 
\end{proof}

\begin{definition}
We define a group $G^{\pm}(n,s^+,s^-)$ as 
\[G^{\pm}(n,s^+,s^-)=\{ g \in Cl_{\pm n} \hat{\otimes} Cl_{(s^+, s^-)} \mid g=v_{i_1}\dots v_{i_{2k}}, v_{i_j} \in \R^n \text{or} \;\R^{s^+} \text{or} \; \R^{s^-}, \lvert v_{i_j} \rvert =1  \}. \]
\end{definition}

\begin{definition}
Let 
\[
S(O(n)\times O(s^+)\times O(s^-))=\{ (A,B^+,B^-) \in O(n)\times O(s^+)\times O(s^-) \mid \det A \det B^+ \det B^- =1 \}. 
\]
The two-to-one homomorphism
\[
p \colon G^{\pm}(n,s^+, s^-) \to S(O(n)\times O(s^+)\times O(s^-))
\]
is defined by
\[
p(g)v := gvg^{-1}
\]
for
$g \in G^{\pm}(n,s^+, s^-),  v \in \R^n \oplus \R^{s^+} \oplus \R^{s^-}$. 

We will denote by $p_n, p_{s^+}$ and $p_{s^-}$ the compositions of $p$ with the projections from $S(O(n)\times O(s^+)\times O(s^-))$ to each component $O(n), O(s^+)$ and $O(s^-)$ respectively. 
\end{definition}

\begin{definition}\label{def-G-+}
Let $Y$ be an $n$ dimensional Riemannian manifold. Let us denote by $P_{Y}$ the orthogonal frame bundle of $TY$. 
We call $(\tilde{P},P,\pi,o,E_+, E_-)$ a $G^{\pm}(n,s^+,s^-)$ structure if it satisfies the following properties:
\begin{itemize}
\item $E_{\pm}$ is an $s^{\pm}$ dimensional real vector bundle such that their structure group is $O(s^{\pm})$. We will denote by $P_{E_{\pm}}$ its frame bundle. (Double-sign corresponds. ) 
\item $o$ is an orientation of $TY \oplus E_+ \oplus E_-$. 
\item $P$ is a principal $S(O(n)\times O(s^+)\times O(s^-))$ bundle defined as the subbundle  of $P_{Y} \times_{Y} P_{E_+} \times_Y P_{E_-}$:
\[
P=\{ (f_n,f_+,f_-) \in P_{Y} \times_{Y} P_{E_+} \times_Y P_{E_-} \mid f_n, \; f_+, \; f_- \text{are compatible with the orientation}\;o\;\text{in this order} \}.
\]
 
\item We denote by $\tilde{P}$ a principal $G^{\pm}(n,s^+,s^-)$ bundle and $\pi \colon \tilde{P} \to P$ is a smooth map such which satisfies the following commutative diagram 
\[
\begin{CD}
\tilde{P} @>{\cdot g}>> \tilde{P}  \\
@V{\pi}VV   @V{\pi}VV  \\
P @>>{\cdot p(g)}> P
\end{CD}
\]
for all $g \in G^{\pm}(n,s^+,s^-)$. 
\end{itemize}
\end{definition}

\begin{definition}
Let $(\tilde{P},P,\pi,o,E_+, E_-)$ and $(\tilde{P'},P',\pi',o',E'_+, E'_-)$ be $G^{\pm}(n,s^+,s^-)$ structures on $Y$. We define they are isomorphic if there exist  isomorphisms of principal bundles $f \colon P \to P'$ and $\tilde{f} \colon \tilde{P} \to \tilde{P'}$ such that the following diagram commutes. 
\[
\begin{CD}
\tilde{P} @>{\tilde{f}}>> \tilde{P'} \\
@V{\pi}VV @V{\pi'}VV \\
P @>{f}>> P' \\
@V{p_{n}}VV  @V{p_{n}}VV \\
P_{Y} @>{id}>> P_{Y}
\end{CD}
\]
\end{definition}

\subsection{Definition of spinor bundles}%{指数の定義}

\begin{definition}
A generalized $\Z/2\Z$ graded spinor representation of $G^+(n,s^+,s^-)$ is a pair $(\alpha, S)$ with the following properties: 
\begin{itemize}
\item $S$ is a real vector space with a metric and a $\Z/2\Z$ grading $S=S_0 \oplus S_1$. 
\item $\alpha$ is a representation $\alpha \colon G^{\pm}(n,s^+,s^-) \to O(S)$ such that for all $g \in G^{\pm}(n,s^+,s^-)$, $\alpha(g)$ preserves the $\Z/2\Z$ grading of $S$. 
\item The representation space $S$ has an $G^{\pm}(n,s^+,s^-)$ equivariant products
\begin{align*}
c'_n \colon \R^n \times S \to S, \\
c'_{s^+} \colon \R^{s^+} \times S \to S, \\
c'_{s^-} \colon \R^{s^-} \times S \to S 
\end{align*}
such that they are anti-commutative each other and odd. Here we define the action of $G^{\pm}(n,s^+,s^-)$ on $\R^n, \R^{s^+}, \R^{s^-}$ through the  projections $p_n, p_+, p_-$. 
\item The multiplications above gives a $Cl_{\pm n} \hat{\otimes} Cl_{(s^+, s^-)}$ module structure on $S$. 
\end{itemize}
Moreover, if $S$ has an additional left Clifford action of $Cl_{(b,a)}$ and its generators anti-commutes with $c'_n,c'_{s^+},c'_{s^-}$, we call $(\alpha, S)$ a generalized $\Z/2\Z$ graded spinor representation with $Cl_{(b,a)}$ action. 
\end{definition}

\begin{definition}
We define $(\rho, Cl_{\pm n} \hat{\otimes} Cl_{(s^+, s^-)})$ to be the generalized $\Z/2\Z$ graded spinor representation with left $Cl_{\mp n} \hat{\otimes}Cl_{(s^-,s^+)}$ action as follows:
Let $g \in G^{\pm}(n,s^+,s^-)$ and $\phi \in Cl_{\pm n} \hat{\otimes} Cl_{(s^+, s^-)}$. We define  $\rho(g)\phi:=g\phi$  where the right hand side is a multiplication of $Cl_{\pm n} \hat{\otimes} Cl_{(s^+, s^-)}$. 
The representation $\rho$ preserves the $\Z/2\Z$ grading of $Cl_{\pm n} \hat{\otimes} Cl_{(s^+, s^-)}$. 
We define 
\[
c'_n \colon \R^n \times Cl_{\pm n} \hat{\otimes} Cl_{(s^+, s^-)} \to Cl_{\pm n} \hat{\otimes} Cl_{(s^+, s^-)}
\]
by $c'_n(v)\phi=v\phi$. Replacing $\R^n$ with $\R^{s_{\pm}}$, we define $c'_{s_+}, c'_{s_-}$ similarly. We define the additional left $Cl_{\mp n} \hat{\otimes} Cl_{(s^-, s^+)}$ action by  the following way. Let $\epsilon$ be the $\Z/2\Z$ grading operator of $Cl_{\pm n} \hat{\otimes} Cl_{(s^+, s^-)}$. We define $v \cdot \phi:=(\epsilon \phi)v$ for $v \in \R^n \oplus \R^{s^+} \oplus \R^{s^-}$ where the right hand side is a multiplication of $Cl_{\pm n} \hat{\otimes} Cl_{(s^+, s^-)}$. This defines left $Cl_{\mp n} \hat{\otimes} Cl_{(s^-, s^+)}$ because the right $Cl_{\pm n} \hat{\otimes} Cl_{(s^+, s^-)}$ action is odd and $v$ anti-commutes with $\epsilon$. 
\end{definition}

\begin{definition}
Let $\mathfrak{s}=(\tilde{P},P,\pi,o,E_+, E_-)$ be a $G^{\pm}(n,s^+,s^-)$ structure on $Y$ and  let $(\alpha,S)$ be a generalized $\Z/2\Z$ graded spinor representation of $G^+(n,s^+,s^-)$. 
Then they define the vector bundle 
\[
\Slash{S}=\tilde P \times_{\alpha} S. 
\]
Let $\Slash{S}_0$ and $\Slash{S}_1$ are subbundle of $\Slash{S}$ defined by $S_0$ and $S_1$ respectively. 
We define the Clifford multiplication on $\Slash S$ using $c'_n,c'_{s^+},c'_{s^-}$: 
\[
c_{TY} \colon TY \times \Slash S \to \Slash S, \; c_{E_{\pm}} \colon E_{\pm} \times \Slash S \to \Slash S.
\]
We call $\Slash{S}$ a generalized $\Z/2$ graded Spinor bundle. 
If $(\alpha, S)$ is a generalized $\Z/2\Z$ graded spinor representation with $Cl_{(b,a)}$ action, We call $\Slash{S}$ a generalized $\Z/2$ graded Spinor bundle with $Cl_{(b,a)}$ action. 
\end{definition}

\begin{remark}
If $\Slash{S}$ have a left $Cl_{(b,a)}$ action, we have a right $Cl_{(a,b)}$ action which commutes with $c_{TY}, c_{E_{\pm}}$. Let $\epsilon$ be a $\Z/2\Z$ grading operator and $\phi \in \Slash S$. For a generator $e \in Cl_{(b,a)}$, we define $\phi \cdot e \colon = e \cdot (\epsilon \phi)$. 
\end{remark}

\begin{definition}
Let $\mathfrak{s}=(\tilde{P},P,\pi,o,E_+, E_-)$ be a $G^{\pm}(n,s^+,s^-)$ structure on $Y$. We will denote by $\Slash{\mathfrak{S}} = \Slash{\mathfrak{S}}_0 \oplus \Slash{\mathfrak{S}}_1$ the generalized $\Z/2$ graded Spinor bundle with $Cl_{\mp n}\hat{\otimes}Cl_{(s^-, s^+)}$ action defined by the generalized $\Z/2\Z$ graded spinor representation with $Cl_{\mp n}\hat{\otimes}Cl_{(s^-, s^+)}$ action $(\rho, Cl_{\pm n} \hat{\otimes} Cl_{(s^+, s^-)})$. 
We call $\Slash{\mathfrak S}$ the standard Spinor bundle of $\mathfrak s$. 
\end{definition}

\begin{definition}\label{defineindex}
Let $\mathfrak{s}=(\tilde{P},P,\pi,o,E_+, E_-)$ be a $G^{\pm}(n,s^+,s^-)$ structure on $Y$ and $\Slash{\mathfrak S}$ the standard Spinor bundle of $\mathfrak s$. We define a Dirac type operator $\Slash D$ on $\Gamma(\Slash{\mathfrak S})$ by using the Clifford multiplication $c_{TY}$. We will denote by $\epsilon$ the $\Z/2\Z$ grading operator on $\Slash{\mathfrak S}$. 

Let us denote by $\ind (\mathfrak s)$ the element in $KO^{-n \pm (s^- - s^+)}(pt)$ defined by the representative element
\[(\Slash{D},\epsilon,  Cl_{\mp n} \otimes Cl_{(s^-, s^+)}, L^2(Y,\Slash{\mathfrak S})). \]
We call $\ind (\mathfrak s)$ the index of the $G^{\pm}(n,s^+,s^-)$ structure $\mathfrak s$. 
\end{definition}

\begin{remark}
The index defined above coincides the index of $Spin$ structures when $s^+=s^-=0$ so called Atiyah-Milnor-Singer invariant. This index is a generalization of the $\bmod 2$ index of $Spin$ structures on riemannian surface and the $\hat A$ genus. This is defined explicitely by Lawson-Michaelson in \cite{LM89}, II, \S 7. In some cases, the index coincides with the index of $H_s(n)$ structure introduced in \cite{FreedHopkins1}. 
\end{remark}

\section{Proof of the Main Theorem}
\subsection{Statement of the Main Theorem}
Now we can state the main theorem. 
\begin{theorem}\label{main}[The main theorem]
There exists a homomorphism $f$ such that the following diagram commutes :
\[
\begin{diagram}
\node{\Omega_{n}^{G^+(n,s^+,s^-)}(pt)}\arrow{e,t}{f} \arrow{se}\node{\Omega_{n-s^-}^{G^+(n-s^-, s^+, 0)}(pt)}\arrow{s} \\
\node[2]{KO^{-n-s^+ +s^-}(pt)} 
\end{diagram}
\]
where the morphisms to the $KO$ group are defined by the indecies. 
\end{theorem}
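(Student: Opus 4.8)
The plan is to realize the map $f$ geometrically on the level of cobordism by choosing, for a given closed $G^+(n,s^+,s^-)$-manifold $(Y,\mathfrak s)$, a \emph{characteristic submanifold} $X\subset Y$ whose normal bundle is identified with $E_-|_X$, and then to show that $X$ carries a natural $G^+(n-s^-,s^+,0)$ structure $\mathfrak s'$ whose cobordism class depends only on that of $\mathfrak s$. Concretely, I would take a generic section of the bundle associated to $E_-$ via the structure, let $X$ be its zero set, observe that $TY|_X\cong TX\oplus E_-|_X$, and use this splitting together with the isomorphism $Cl_{\pm n}\hat\otimes Cl_{(s^+,s^-)}\cong Cl_{\pm(n-s^-)}\hat\otimes Cl_{(s^+,0)}\hat\otimes (\text{something involving }E_-)$ to reduce the principal $G^+(n,s^+,s^-)$-bundle $\tilde P|_X$ to a principal $G^+(n-s^-,s^+,0)$-bundle over $X$. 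Well-definedness on cobordism classes follows by applying the same construction to a cobordism $W$ between two such manifolds: the characteristic submanifold with boundary gives a $G^+(n-s^-,s^+,0)$-cobordism, after a standard transversality argument to make the section generic relative to the boundary.

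The commutativity of the triangle, i.e.\ $\ind(\mathfrak s)=\ind(\mathfrak s')$, is exactly the localization statement and is proved by the Witten deformation as announced in the introduction. I would introduce the family of Dirac-type operators $\Slash D_t = \Slash D + t\,c_{E_-}(\sigma)$ where $\sigma$ is the section cutting out $X$ (extended to a section of the relevant bundle using $c'_{s^-}$), regarded as a one-parameter deformation inside the space of skew-adjoint Fredholm operators anti-commuting with the $Cl_{\mp n}\hat\otimes Cl_{(s^-,s^+)}$ action and with $\epsilon$. Since the class in $KO^{-n-s^++s^-}(\mathrm{pt})$ represented by such a four-tuple is a homotopy invariant (by the equivalence relation set up in the conventions of Section 2), $\ind(\mathfrak s)$ is independent of $t$. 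Then, as $t\to\infty$, a Bochner–Weitzenböck/Agmon-estimate argument localizes the kernel near $X$: the spinors concentrate in the zero set of $\sigma$, and the limiting operator is, fiberwise in the normal directions, a harmonic oscillator whose ground state is one-dimensional and whose Clifford module structure accounts precisely for the shift from $Cl_{(s^+,s^-)}$ to $Cl_{(s^+,0)}$ together with the $-s^-$ shift in dimension. This identifies the $t\to\infty$ limit with the representative four-tuple for $\ind(\mathfrak s')$, using Lemma \ref{H=H'V} to match the Clifford modules and Hilbert spaces on the nose.

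The main obstacle, and the part I expect to require genuine work, is the analytic limit $t\to\infty$: one must show that the eigenvalues of $\Slash D_t$ not forced to zero by the Clifford symmetry stay uniformly bounded away from zero (a spectral gap estimate), that the low-lying eigensections decay exponentially away from $X$ with rate $\sim\sqrt t$, and that the induced map on ground states is an isomorphism of $Cl_{\mp(n-s^-)}\hat\otimes Cl_{(s^+,0)}$-modules compatible with all the extra Clifford generators and the grading $\epsilon$. This is the content that the author defers to the Appendix; in the plan I would isolate it as a key lemma asserting that for $t$ large the four-tuple $(\Slash D_t,\epsilon,Cl_{\mp n}\hat\otimes Cl_{(s^-,s^+)},L^2(Y,\Slash{\mathfrak S}))$ is equivalent to $(\Slash D',\epsilon',Cl_{\mp(n-s^-)}\hat\otimes Cl_{(s^+,0)},L^2(X,\Slash{\mathfrak S}'))\hat\otimes(\text{a fixed isomorphism four-tuple})$, and derive Theorem \ref{main} from it. A secondary, more bookkeeping-type obstacle is verifying that the $G^+(n-s^-,s^+,0)$ structure on $X$ is genuinely well defined (independent of the choice of generic section up to isomorphism and cobordism), which again reduces to transversality on a cobordism and the naturality of the reduction of structure group.
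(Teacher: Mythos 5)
Your proposal follows essentially the same route as the paper: $f$ is constructed via a characteristic submanifold $C=h^{-1}(0)$ of a transverse section of $E_-$ with the induced $G^+(n-s^-,s^+,0)$ structure (the paper's Theorem~\ref{loc} and Lemma~\ref{bordism}), and commutativity is proved by the Witten deformation $\Slash D_m=\Slash D + m\,c_-(h)$, localizing to a fiberwise harmonic oscillator on $N(C)$ whose ground state recovers the Clifford module of $\mathfrak s_C$ via Lemma~\ref{H=H'V}, with the spectral-gap and concentration estimates deferred to the Appendix exactly as you anticipate. The sketch is correct and matches the paper's strategy at every step.
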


\begin{remark}
We can find a similar commutative diagram of $G^-$ bordism groups. In this case, the codomain target of $f$ is $\Omega_{n-s^+}^{G^-(n-s^+, 0, s^-)}(pt)$ instead of $\Omega_{n-s^-}^{G^+(n-s^-, s^+, 0)}(pt)$. The proof is parallel to that of the $G^+$ cases. Moreover, we can prove $G^+(n, s^+, s^-) \cong G^-(n, s^-, s^+)$ and hence we will study only the $G^+$ cases. 
\end{remark}

First, we will construct the morphism $f$ in the main theorem in this subssection. Second, we will prove a localization theorem of the indices in subsection \ref{Witten_deformation}. Some analytic lemmas are proved in Appendix. Finally, we prove the commutativity of the diagram using the localization theorem in the final part of this section and we will complete the proof of the main theorem.

\begin{lemma}\label{emb}
Let $\mathfrak{s}=(\tilde{P},P,\pi,o,E_+, E_-)$ be a $G^{\pm}(n,s^+,s^-)$ structure on $Y$. 
Then there exists a canonical $Spin$ structure of the vector bundle 
\[
(\det TY \otimes \det E_+) \oplus TY \oplus E_+ \oplus \det E_- \oplus E_-
\]
introduced by $\mathfrak s$. Conversely, if  
the vector bundle above has an $Spin$ structure, it gives a canonical $G^{+}(n,s^+,s^-)$ structure on $Y$. This is a one-to-one correspondence. 
\end{lemma}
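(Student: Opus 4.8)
The plan is to exhibit the $Spin$ structure explicitly by computing the relevant structure group and comparing it with $G^{+}(n,s^+,s^-)$. First I would analyze the target bundle $W := (\det TY \otimes \det E_+) \oplus TY \oplus E_+ \oplus \det E_- \oplus E_-$. Its structure group is governed by how $S(O(n)\times O(s^+)\times O(s^-))$ acts: a matrix $(A,B^+,B^-)$ acts on $TY$ by $A$, on $E_+$ by $B^+$, on $E_-$ by $B^-$, on $\det TY \otimes \det E_+$ by $\det A \cdot \det B^+$, and on $\det E_-$ by $\det B^-$. Since $\det A \det B^+ \det B^- = 1$, we have $\det A \det B^+ = (\det B^-)^{-1} = \det B^-$ (a sign), so the action on $(\det TY \otimes \det E_+) \oplus \det E_-$ is by a scalar $\varepsilon = \det B^- = \det A \det B^+$ repeated twice, i.e. by $\varepsilon \cdot \mathrm{id}_{\R^2}$. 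Hence the total determinant of the action of $(A,B^+,B^-)$ on $W$ equals $\varepsilon^2 \det A \det B^+ \det B^- = 1$, so the structure group of $W$ reduces to $SO(2+n+s^++1+s^-)$; this is the first step toward a $Spin$ structure and explains the two extra determinant line summands — they are there precisely to kill the orientation ambiguity.

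Second, I would identify the relevant double cover. The key computation is that $G^{+}(n,s^+,s^-) \subset Cl_{n}\,\hat\otimes\, Cl_{(s^+,s^-)}$, and there is a natural algebra embedding $Cl_{n}\,\hat\otimes\, Cl_{(s^+,s^-)} \hookrightarrow Cl_{N}$ for a suitable $N$ where the $Cl_{(s^+,s^-)}$ factor's negative generators $e_i$ (with $e_i^2 = -1$) are absorbed after tensoring with extra generators coming from the two determinant lines. Concretely, writing $\delta^+$ for a generator of (the pullback of) $\det TY \otimes \det E_+$ and $\delta^-$ for one of $\det E_-$, the elements $\delta^- e_i$ for the generators $e_i$ of $\R^{s^-}$ square to $+1$, which is exactly what one needs to land inside $Cl_{N}$ with $N = 2 + n + s^+ + 1 + s^- = n+s^++s^-+3$ — wait, I would recount: the summands are $\det(\cdot)$ ($1$-dim), $TY$ ($n$), $E_+$ ($s^+$), $\det E_-$ ($1$), $E_-$ ($s^-$), so $N = n+s^++s^-+2$. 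I would then check that under $p$ the image of $G^{+}(n,s^+,s^-)$ in $SO(N)$ is exactly the structure group of $W$ computed above, so that $\tilde P$ furnishes a reduction of $P_W$ to the double cover $Spin(N) \to SO(N)$, i.e. a $Spin$ structure on $W$. For the converse, given a $Spin$ structure on $W$, I would split off the distinguished line subbundles $\det TY\otimes\det E_+$ and $\det E_-$ (these are canonically determined by $TY, E_+, E_-$, which are themselves recovered as the complementary summands), recover the orientation $o$ from the $Spin$-orientation of $W$ together with the orientations of the determinant lines, and produce $\tilde P$ as the preimage; the two constructions are mutually inverse essentially by construction, giving the claimed bijection.

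The main obstacle I anticipate is bookkeeping the signs and the $\Z/2\Z$-gradings correctly in the embedding $Cl_{n}\,\hat\otimes\, Cl_{(s^+,s^-)} \hookrightarrow Cl_{N}$: one must use the graded tensor structure (as the excerpt emphasizes $\hat\otimes$) so that generators from different factors anticommute, and one must verify that the specific products $\delta^+$, $\delta^- e_i$, together with the original $\epsilon_j$ (for $\R^{s^+}$) and the generators of $\R^n$, actually generate a subalgebra isomorphic to the relevant $Cl_N$ and that $G^{+}(n,s^+,s^-)$ maps into $Spin(N)$ rather than merely $Pin(N)$ — this is where the constraint $\det A\det B^+\det B^- = 1$ and the evenness condition $g = v_{i_1}\cdots v_{i_{2k}}$ in the definition of $G^{+}$ are used. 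A secondary subtlety is checking that the correspondence is natural, i.e. independent of choices of local trivializations, which amounts to observing that every step is equivariant for the respective structure groups.
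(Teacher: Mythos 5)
Your high-level plan matches the paper's: reduce the structure group of $W$ to $SO(N)$ with $N=n+s^++s^-+2$ (your determinant computation is correct), then exhibit a Clifford-algebra embedding of $Cl_n\hat\otimes Cl_{(s^+,s^-)}$ into $Cl_N$ using the two determinant lines as auxiliary generators, so that the evenness condition in the definition of $G^+$ forces $G^+(n,s^+,s^-)$ to land in $Spin(N)$ rather than $Pin(N)$. The converse direction is handled the same way in both.

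However, your concrete embedding formula is wrong, and not merely a matter of bookkeeping. You propose to send a negative generator $e_i$ of $\R^{s^-}$ to $\delta^- e_i$, a \emph{degree-two} element of $Cl_N$. But in a Clifford algebra, a degree-two element $\delta^- e_i$ \emph{commutes} (not anticommutes) with every degree-one generator $g$ that anticommutes with both $\delta^-$ and $e_i$: indeed $g\,\delta^- e_i = -\delta^- g e_i = \delta^- e_i\, g$. Since generators of $\R^n$, $\R^{s^+}$ (mapped to degree-one elements on your account) must \emph{anti}commute with the images of the $\R^{s^-}$-generators in the graded tensor product, the map is not an algebra homomorphism. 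The sign of the square is also off under the natural readings: if $\delta^-$ and $e_i$ are anticommuting degree-one generators, $(\delta^- e_i)^2 = -(\delta^-)^2(e_i)^2$, which is $-1$ in $Cl_{+N}$, not $+1$. The paper avoids both problems by sending the \emph{positive} generators $\epsilon_i$ of $\R^n\oplus\R^{s^+}$ to the \emph{degree-three} products $e'_0 e_0 e_i$ (using \emph{both} auxiliary generators $e_0\leftrightarrow\det TY\otimes\det E_+$ and $e'_0\leftrightarrow\det E_-$) and leaving the negative generators as degree-one. Degree three is odd, so anticommutativity with the degree-one images is preserved, and $(e'_0 e_0 e_i)^2=(-1)^3(e'_0)^2(e_0)^2(e_i)^2=+1$ as required in $Cl(0,N)$; moreover, a product of $2k$ source unit vectors maps to a product of $2j+2k$ unit vectors for some $j$, hence lands in $Spin(N)$. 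Your approach could be salvaged by replacing $\delta^- e_i$ with the odd, degree-three element $\delta^+\delta^- e_i$, but as written the embedding fails.
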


\begin{proof}Let $\epsilon_1,\dots,\epsilon_{n+s^+}$ is a standard basis of $\R^n \oplus \R^{s^+}$ and $e'_0, e_0, e_1\dots e_{n+s^+ + s^-}$ are the generators of $Cl(0,(n+1)+(s^+ +1)+s^-)$. There is  an embeding $Cl(n+s^+, s^-) \to Cl(0,(n+1)+(s^+ +1)+s^-)$ given by mapping $\epsilon_1,\dots,\epsilon_{n+s^+}$ to $e'_0 e_0 e_1, \dots, e'_0 e_0 e_{n+s^+}$. Restricting this map, we have an embedding $G^+(n,s^+,s^-) \to Spin(1+n+s^+ +1+s^-)$. Let $\pi$ be the projection $Spin(1+n+s^+ +1+s^-) \to SO(1+n+s^+ +1+s^-)$. The image $\pi(G^+(n,s^+,s^-))$ is isomorphic to $S(O(n) \times O(s^+)\times O(s^-))$. This image gives the representation of $S(O(n) \times O(s^+)\times O(s^-))$ whose associated vector bundle is $\det TY \otimes \det E_+ \oplus TY \oplus E_+ \oplus \det E_- \oplus E_-$. Here we define that 
$\langle e_0 \rangle$, 
$ \langle e_1, \dots,e_n \rangle$, 
$\langle e_{n+1}, \dots, e_{n+s^+} \rangle$, 
$\langle e'_0 \rangle$ 
and $\langle e_{n+s^+ +1}, \dots,e_{n+s^+ +s^-}\rangle $ are the representations such that the associated vector bundle of the representations are 
$\det TY \otimes \det E_+$, 
$TY$, 
$E_+$, 
$\det E_-$ 
and $E_-$ respectively. Then we construct a $Spin$ structure as desired. We have the latter half of the statement by reversing the proof above. 
\end{proof}

We will prove some lemmas in a general setting. 
In the following three lemmas, let $M$ be a manifold and $F, E_0, E_1$ be oriented real vector bundles  on $M$ with fiber metrics. 

\begin{lemma}\label{spin0}
An orientation preserving isometry $\alpha \colon E_0 \to E_1$ determines a canonical $Spin$ structure on $E_0 \oplus E_1$.  
\end{lemma}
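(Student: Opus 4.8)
The plan is to interpret the isometry $\alpha$ as a compatible complex structure on $E_0\oplus E_1$ together with a canonical trivialization of the associated complex determinant line bundle, and then to use the fact that an $SU(r)$-reduction of the structure group canonically determines a $Spin$ structure. Concretely, I would first set
\[
J\colon E_0\oplus E_1\longrightarrow E_0\oplus E_1,\qquad J(v,w)=(-\alpha^{-1}w,\ \alpha v);
\]
then $J^2=-\mathrm{id}$, and $J$ is orthogonal because $\alpha$ and $\alpha^{-1}$ are isometries, so $W:=(E_0\oplus E_1,J)$ is a complex vector bundle of complex rank $r:=\operatorname{rank}E_0=\operatorname{rank}E_1$ whose underlying Euclidean bundle is $E_0\oplus E_1$.

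Next I would trivialize $\det_{\mathbb C}W=\Lambda^{r}_{\mathbb C}W$. For an oriented orthonormal frame $(u_1,\dots,u_r)$ of $E_0$, the vectors $(u_1,0),\dots,(u_r,0)$ form a unitary frame of $W$, since $J(u_i,0)=(0,\alpha u_i)$ and hence $(u_1,0),(0,\alpha u_1),\dots,(u_r,0),(0,\alpha u_r)$ is an orthonormal real frame of $E_0\oplus E_1$. The complex volume $(u_1,0)\wedge\cdots\wedge(u_r,0)$ is unchanged under a change of oriented orthonormal frame of $E_0$, because the transition matrix lies in $SO(r)$ and a real matrix has complex determinant equal to its real determinant, here $1$. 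This produces a canonical nowhere-vanishing section of $\det_{\mathbb C}W$, equivalently a reduction of the structure group of the rank-$2r$ bundle $E_0\oplus E_1$ from $SO(2r)$ to $SU(r)$; one checks that the same section arises if one works with $E_1$ and $\alpha$ in place of $E_0$.

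Finally I would promote this $SU(r)$-reduction to a $Spin$ structure. Since $SU(r)$ is simply connected, the restriction over $SU(r)\subset SO(2r)$ of the double covering $Spin(2r)\to SO(2r)$ admits a unique continuous lift $\widetilde{\iota}\colon SU(r)\to Spin(2r)$ with $\widetilde{\iota}(1)=1$; it is automatically a group homomorphism, as $(g,h)\mapsto\widetilde{\iota}(g)\widetilde{\iota}(h)\widetilde{\iota}(gh)^{-1}$ is a continuous $\{\pm1\}$-valued function on the connected group $SU(r)\times SU(r)$ equal to $1$ at the identity. Composing the $SU(r)$-valued transition functions of $E_0\oplus E_1$ with $\widetilde{\iota}$ then yields a principal $Spin(2r)$-bundle double-covering the orthonormal frame bundle of $E_0\oplus E_1$ compatibly with $Spin(2r)\to SO(2r)$, which is the desired $Spin$ structure. (Equivalently and more briefly: $\alpha$ identifies $E_1$ with $E_0$, so $E_0\oplus E_1\cong E_0\otimes_{\mathbb R}\mathbb C$ carries the structure group $SO(r)\hookrightarrow SU(r)\xrightarrow{\widetilde{\iota}}Spin(2r)$, and this reduction is the $Spin$ structure.)

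Every step uses only $\alpha$ and the prescribed metrics and orientations, and each is compatible with pullback and with isomorphisms of the data, so the $Spin$ structure so obtained is canonical. The points that need care — and which I expect to be the only, and rather mild, obstacles — are the orientation bookkeeping in passing to $\det_{\mathbb C}W$ (the underlying orientation of this $Spin$ structure is the complex orientation of $(W,J)$, which differs from $o_{E_0}\oplus o_{E_1}$ by the sign $(-1)^{r(r-1)/2}$ of the perfect-shuffle permutation of $2r$ letters), and the standard fact that a continuous lift through a covering of a homomorphism out of a connected group, normalized at the identity, is again a homomorphism.
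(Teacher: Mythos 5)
Your proof and the paper's rest on the same two facts: the isometry $\alpha$ reduces the structure group of $E_0\oplus E_1$ to a copy of $SO(r)$ sitting diagonally inside $SO(2r)$, and this copy admits a lift to $Spin(2r)$ because $j_*\colon\pi_1(SO(r))\to\pi_1(SO(2r))$ vanishes. The paper does this directly, using the concatenated frame $(f,\alpha(f))$; your detour through the complex structure $J$ and the $SU(r)$-reduction is a (perfectly reasonable) way to \emph{see} why $j_*$ vanishes, since the diagonal $SO(r)$ factors through the simply connected $SU(r)$, and your argument that the lift is automatically a homomorphism is also fine. So the underlying mechanism is identical, not genuinely different.

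The one real gap is the orientation mismatch you yourself flag and then leave open. The $SU(r)$-reduction covers the $SO(2r)$-frame bundle for the \emph{complex} orientation of $(W,J)$, i.e.\ the one determined by interleaved frames $(u_1,\alpha u_1,\dots,u_r,\alpha u_r)$, whereas the lemma is used in Lemma~\ref{orispin} and Theorem~\ref{loc} with the product orientation $o_{E_0}\oplus o_{E_1}$, the one determined by $(u_1,\dots,u_r,\alpha u_1,\dots,\alpha u_r)$. These differ by the sign $(-1)^{r(r-1)/2}$, so for $r\equiv 2,3\pmod 4$ your construction produces a double cover of the \emph{wrong} $SO(2r)$-bundle, and there is no canonical passage from a $Spin$ structure for one orientation to a $Spin$ structure for the opposite one (any such passage requires a choice, e.g.\ of a reflection in $O(2r)\setminus SO(2r)$ to conjugate by). Note this is not a vacuous case: Theorem~\ref{loc} invokes Lemma~\ref{orispin} with $r=s^-+1$, and e.g.\ the $G^+(3,0,2)$ structures used in Section~5 have $s^-=2$, $r=3$. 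The fix is simply to drop the $J$-detour and reduce directly to the block-diagonal $SO(r)\subset SO(2r)$ as the paper does; the lift $\tilde\jmath\colon SO(r)\to Spin(2r)$ is the same one you construct, but it now covers the correct oriented frame bundle.
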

\begin{proof}
Let $n$ be the rank of $E_0$. The structure group of the subbundle 
\[
\{ (f,\alpha(f)) \mid f \text{ is an oriented orthonormal frame of $E_0$}\}
\]
 of the frame bundle of $E_0 \oplus E_1$ is a subgroup of $SO(2n)$ which is the image of the diagonal embedding $j \colon SO(n) \to SO(n) \times SO(n)  \hookrightarrow SO(2n)$. 
The map $j_* \colon \pi_1(SO(n)) \to \pi_1(SO(2n))$ is trivial, and therefore there exists a homomorphism $\tilde j \colon SO(n) \to Spin(2n)$ which covers $j$. 
Let$\{ g_{\alpha \beta}\}$ be the transition functions of $E_0 \oplus E_1$. We define the $Spin$ structure on $E_0 \oplus E_1$ by using $\{ \tilde{j}(g_{\alpha \beta})\}$ as transition functions. It is easy to check that this $Spin$ structure does not depend on how to take transition functions of $E_0 \oplus E_1$. 
\end{proof}

\begin{lemma}\label{spin}
Let $F, F'$ denote oriented real vector bundles on $M$ with metrics. If there are $Spin$ structures on $F \oplus F'$ and $F'$, these $Spin$ structures determine a canonical $Spin$ structure on $F$. 
\end{lemma}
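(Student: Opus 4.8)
The plan is to reduce this to Lemma \ref{spin0} by an auxiliary stabilization trick. Observe first that $F$ is canonically a direct summand of $(F\oplus F')\oplus F'$ up to the obvious isometry, since $(F\oplus F')\oplus F'\cong F\oplus(F'\oplus F')$. So I would first dispose of the case where the second summand has a particularly simple $Spin$ structure, namely $F'\oplus F'$ equipped with the $Spin$ structure coming from Lemma \ref{spin0} applied to the identity isometry $\mathrm{id}\colon F'\to F'$; call this the \emph{standard} $Spin$ structure on $F'\oplus F'$.

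The key algebraic input is the following: given $Spin$ structures on two oriented bundles $G$ and $G'$, there is a canonically induced $Spin$ structure on $G\oplus G'$ (this is the standard additivity of $Spin$ structures, obtained from the homomorphism $Spin(p)\times Spin(q)\to Spin(p+q)$), and conversely, given $Spin$ structures on $G\oplus G'$ and on $G$, one gets a $Spin$ structure on $G'$ by a torsor/difference argument — the set of $Spin$ structures on a fixed bundle is a torsor over $H^1(M;\Z/2)$, and "adding" a fixed $Spin$ structure on $G$ is a bijection between $Spin$ structures on $G'$ and $Spin$ structures on $G\oplus G'$ compatible with it. Granting this, here is the construction: from the given $Spin$ structure on $F'$, build the $Spin$ structure on $F'\oplus F'$ by additivity; compare it with the standard one from Lemma \ref{spin0} to extract a class in $H^1(M;\Z/2)$, or more cleanly, just use additivity in the other direction. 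Concretely, I would take the $Spin$ structure on $F\oplus F'$ (given), add to it the given $Spin$ structure on $F'$ via additivity to get a $Spin$ structure on $(F\oplus F')\oplus F' = F\oplus (F'\oplus F')$; then use Lemma \ref{spin0} to equip $F'\oplus F'$ with its standard $Spin$ structure, and apply the converse (difference) operation to the $Spin$ structure on $F\oplus(F'\oplus F')$ and the standard one on $F'\oplus F'$, obtaining the desired $Spin$ structure on $F$.

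In terms of execution: (1) recall/verify additivity of $Spin$ structures under direct sum, at the level of transition functions using $Spin(p)\times Spin(q)\to Spin(p+q)$; (2) recall that the forgetful map from $Spin$ structures on $G\oplus G'$ to "$Spin$ structures on $G\oplus G'$ restricting compatibly" and the torsor structure over $H^1(M;\Z/2)$ give a well-defined difference/cancellation; (3) assemble as above, using Lemma \ref{spin0} for the standard $Spin$ structure on $F'\oplus F'$; (4) check independence of choices of transition functions, exactly as in the proof of Lemma \ref{spin0}. Alternatively, and perhaps more transparently for a transition-function-based paper, I would note that since $\pi_1(SO(k))\to\pi_1(SO(k+\ell))$ is induced by stabilization and the relevant obstruction is $w_2$, the equation $w_2(F\oplus F') = w_2(F)+w_2(F')$ together with a choice of $Spin$ structures on $F\oplus F'$ and $F'$ pins down lifts of the transition cocycles that are mutually consistent, hence a lift for $F$.

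The main obstacle is the cancellation step: making precise, without hand-waving, that "subtracting" a $Spin$ structure on $F'$ from one on $F\oplus F'$ yields a canonical — not merely existent — $Spin$ structure on $F$. The cleanest route is the $H^1(M;\Z/2)$-torsor formalism: fix any $Spin$ structure $\sigma_F$ on $F$ (it exists since $w_2(F)=w_2(F\oplus F')-w_2(F')=0$ once we know $F\oplus F'$ and $F'$ are $Spin$; strictly one must first argue $F$ is $Spin$-able), then $\sigma_F\oplus(\text{given on }F')$ and the given $Spin$ structure on $F\oplus F'$ differ by a unique class in $H^1(M;\Z/2)$, and we correct $\sigma_F$ by that class. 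One then checks this is independent of the initial choice $\sigma_F$. I expect this bookkeeping, and the verification that the resulting assignment is natural and independent of transition-function choices, to be the only nontrivial part; the rest is formal manipulation of the homomorphisms $Spin(p)\times Spin(q)\to Spin(p+q)$.
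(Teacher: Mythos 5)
Your proposal is correct but takes a genuinely different route from the paper. The paper works directly with transition cocycles: write the lift (to $Spin(n+m)$) of the transition functions $(g_{\alpha\beta}, g'_{\alpha\beta})$ of $F \oplus F'$ as a class $[(\tilde g_{\alpha\beta}, \tilde g'_{\alpha\beta})] \in \bigl(Spin(n)\times Spin(m)\bigr)/(\Z/2\Z)$; because the Spin structure on $F'$ already pins down $\tilde g'_{\alpha\beta}$, the representative $(\tilde g_{\alpha\beta}, \tilde g'_{\alpha\beta})$ is unique, hence $\tilde g_{\alpha\beta}$ is determined, and the cocycle condition for $\{\tilde g_{\alpha\beta}\}$ is inherited from the two given cocycle conditions. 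This is a short, self-contained argument that never invokes the classification of Spin structures as an $H^1(M;\Z/2)$-torsor. Your torsor/difference argument (fix any auxiliary $\sigma_F$, compare $\sigma_F \oplus \sigma_{F'}$ with the given $\sigma_{F\oplus F'}$, correct $\sigma_F$ by the difference class, check independence of $\sigma_F$) is also valid and is the "abstract nonsense" version; it buys conceptual transparency at the cost of having to first establish existence (via $w_2(F)=0$) and well-definedness separately, whereas the paper gets both for free from the cocycle computation.

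One caution: your first plan — equip $F'\oplus F'$ with the standard Spin structure via Lemma \ref{spin0} and then "subtract" it from the one on $F\oplus(F'\oplus F')$ — is circular, since that subtraction step \emph{is} the statement of Lemma \ref{spin} (applied with $F'\oplus F'$ in place of $F'$); Lemma \ref{spin0} gives no independent leverage here. You do notice this tension ("the main obstacle is the cancellation step") and fall back to the torsor formalism, which resolves it, but the initial stabilization detour through $F'\oplus F'$ adds nothing and should be dropped.
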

\begin{proof}
We will denote by $\{ g_{\alpha \beta}\}, \{ g'_{\alpha \beta}\}$ transition functions of $F$ and $F'$ respectively. 
Let $\{ \tilde{g'}_{\alpha \beta}\}$ be the lifts of $\{ g'_{\alpha \beta}\}$ to the $Spin$ group defined by the  $Spin$ structure on $F'$. 
The lift of $(g_{\alpha \beta},  g'_{\alpha \beta})$ defined by the $Spin$ structure of $F \oplus F'$ is expressed by $[(\tilde{g}_{\alpha \beta},\tilde{g'}_{\alpha \beta})] \in Spin(n)\times Spin(m)/(\Z/2\Z)$ where $\tilde{g}_{\alpha \beta}$ is a lift of $g_{\alpha \beta}$. The lift $\tilde{g}_{\alpha \beta}$ is unique because the second component is fixed to be $\tilde{g'}_{\alpha \beta}$. 

The lift of the transition functions $\{ \tilde{g}_{\alpha \beta} \}$ satisfy the cocycle condition because 
$\{ [(\tilde{g}_{\alpha \beta},\tilde{g'}_{\alpha \beta})] \}$ and $\{ \tilde{g'}_{\alpha \beta} \}$ do. We define the $Spin$ structure on $F$ by using $\{ \tilde{j}(g_{\alpha \beta}\}) \}$ as transition functions. Again it is easy to check that this $Spin$ structure does not depend on how to take transition functions of $F$.
\end{proof}
The lemma below is clear from Lemma \ref{spin0} and Lemma \ref{spin}.  
\begin{lemma}\label{orispin}
Let $F \oplus E_0 \oplus E_1$ be a vector bundle with orientation $o$ and $Spin$ structure $\tilde{P}$. We assume that $E_1$ is oriented and there is an isometry $\phi \colon E_0 \to E_1$. We will denote by $o_E$ the orientation of $E_0$. 
Then $F$ has a natural orientation and a $Spin$ structure defined by $o, o_E, \tilde{P}$ and $\phi$. 
\end{lemma}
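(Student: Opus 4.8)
The plan is to deduce the statement by concatenating Lemma \ref{spin0} and Lemma \ref{spin}, as indicated in the remark above. First I would use $\phi$ to transport the orientation $o_E$ of $E_0$ to an orientation $\phi_{*}(o_E)$ of $E_1$. With respect to $o_E$ and $\phi_{*}(o_E)$ the isometry $\phi \colon E_0 \to E_1$ is orientation preserving, so Lemma \ref{spin0} furnishes a canonical $Spin$ structure $\tilde{P}_E$ on $E_0 \oplus E_1$; I will write $o_E \oplus \phi_{*}(o_E)$ for the orientation of $E_0 \oplus E_1$ underlying $\tilde{P}_E$. (If one prefers to keep the originally given orientation of $E_1$ and $\phi$ happens to reverse it, one first composes $\phi$ with a fibrewise reflection of $E_1$; this changes none of the hypotheses.)

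Next I would regard $E_0 \oplus E_1$ as a subbundle of $F \oplus E_0 \oplus E_1$ with complementary bundle canonically identified with $F$. Since the ambient bundle carries the orientation $o$ and the subbundle carries the orientation $o_E \oplus \phi_{*}(o_E)$, the bundle $F$ inherits a well defined orientation $o_F$: the one for which an oriented frame of $F$, followed by an oriented frame of $E_0$ and then its $\phi$-image, forms an oriented frame of $F \oplus E_0 \oplus E_1$. This is the asserted natural orientation of $F$.

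Finally I would invoke Lemma \ref{spin} with $F' := E_0 \oplus E_1$. The hypotheses are met: $F \oplus F' = F \oplus E_0 \oplus E_1$ is oriented by $o$ and carries the $Spin$ structure $\tilde{P}$; the bundle $F'$ is oriented by $o_E \oplus \phi_{*}(o_E)$ and carries the $Spin$ structure $\tilde{P}_E$; and $F$ is oriented by $o_F$. Hence Lemma \ref{spin} outputs a canonical $Spin$ structure on $F$, manifestly a function of $o$, $o_E$, $\tilde{P}$ and $\phi$ alone, which is the claim.

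The only non-formal point, and the one I expect to be the main (small) obstacle, is the orientation bookkeeping linking these three steps: one must check that the orientation of $E_0 \oplus E_1$ feeding into the hypotheses of Lemma \ref{spin} is \emph{exactly} the one used to split $o_F$ off from $o$, so that the cocycle computation in the proof of Lemma \ref{spin} carries over unchanged, and --- if the reflection adjustment of $\phi$ was needed --- that it does not alter the resulting $Spin$ structure up to canonical isomorphism. Everything else is a direct application of Lemmas \ref{spin0} and \ref{spin}.
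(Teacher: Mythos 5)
Your proof is correct and follows exactly the route the paper indicates: the paper states that Lemma \ref{orispin} is clear from Lemma \ref{spin0} and Lemma \ref{spin}, and you have simply spelled out the concatenation of those two lemmas, with the right orientation bookkeeping.
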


We use the lemmas above in our setting.  

\begin{definition}
Let $Y$ be an $n$ dimensional Riemannian manifold and $\mathfrak s =(\tilde{P},P,\pi,o,E_+, E_-)$ be a $G^{+}(n,s^+,s^-)$ structure on $Y$. We will denote by $h \in \Gamma(E_-)$ an transverse section. Let $C=h^{-1}(0)$. We call $C$ a caharacteristic submanifold of $\mathfrak s$.
\end{definition}

The theorem below is a generalization of the theorem that is a exsitence of a canonical $Spin$ structure on a characteristic submanifold of $Spin^c$ structure. 

\begin{theorem}\label{loc}
Let $Y$ be an $n$ dimensional Riemannian manifold and $\mathfrak s =(\tilde{P},P,\pi,o,E_+, E_-)$ be a $G^{+}(n,s^+,s^-)$ structure on $Y$ and $C$ be a characteristic submanifold of $\mathfrak s$. Then we have a natural $G^{+}(n-s^-, s^+, 0)$ structure on $C$.  
\end{theorem}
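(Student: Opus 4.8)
The plan is to reduce the construction of the $G^+(n-s^-,s^+,0)$ structure on $C$ to the $Spin$-structure bookkeeping already packaged in Lemma~\ref{emb} together with Lemma~\ref{orispin}. First I would apply Lemma~\ref{emb} to $\mathfrak s$: the $G^+(n,s^+,s^-)$ structure on $Y$ is equivalent to a $Spin$ structure on the rank-$(1+n+s^++1+s^-)$ bundle
\[
W \;=\; (\det TY\otimes\det E_+)\oplus TY\oplus E_+\oplus\det E_-\oplus E_-
\]
over $Y$. Restrict everything to $C=h^{-1}(0)$. Since $h$ is a transverse section of $E_-$, the normal bundle $\nu$ of $C$ in $Y$ is canonically isometric to $E_-|_C$ via the derivative $\nabla h$, so $TY|_C\cong TC\oplus E_-|_C$ as oriented metric bundles. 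The aim is then to massage $W|_C$ into a bundle of the shape $F\oplus E_0\oplus E_1$ with an isometry $E_0\to E_1$, so that Lemma~\ref{orispin} strips off the $E_0\oplus E_1$ summand and leaves a $Spin$ structure (and compatible orientation) on what will become the $G^+(n-s^-,s^+,0)$ bundle of $C$.

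Concretely, over $C$ we have two copies of $E_-|_C$ inside $W|_C$: one as the last summand $E_-$, one sitting inside $TY|_C$ as the normal direction $\nu\cong E_-|_C$. These two copies, together with an orientation-preserving isometry between them coming from $\nabla h$ (after possibly twisting by $\det E_-$ to fix orientations), form the cancelling pair $E_0\oplus E_1$ for Lemma~\ref{orispin}. Also, on $C$ the bundle $\det E_-|_C$ becomes identified (via the isometry $\nu\cong E_-|_C$) with $\det\nu$, which combines with $\det TC$ and $\det E_+$ to reorganize the determinant-line summands: $\det TY|_C\cong\det TC\otimes\det E_-|_C$, so $\det TY\otimes\det E_+\otimes(\text{extra }\det E_-)$ collapses appropriately to $\det TC\otimes\det E_+$. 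After these identifications, peeling off the cancelling pair via Lemma~\ref{orispin} yields a $Spin$ structure together with a compatible orientation on
\[
(\det TC\otimes\det E_+)\oplus TC\oplus E_+\oplus\det 0\oplus 0,
\]
which is exactly the bundle to which the converse direction of Lemma~\ref{emb} (with $n\rightsquigarrow n-s^-$, $s^+\rightsquigarrow s^+$, $s^-\rightsquigarrow 0$) assigns a $G^+(n-s^-,s^+,0)$ structure on $C$, with auxiliary bundles $E_+|_C$ and the zero bundle.

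I would then record that the construction is natural: it depends only on $\mathfrak s$ and the isotopy class of the transverse section $h$ (different transverse sections give isotopic $C$ and, along the isotopy, an identification of the resulting structures), and it is compatible with restriction, which is what will be needed in Section~3 for the bordism-level map $f$ of Theorem~\ref{main}. The main obstacle I anticipate is purely organizational rather than conceptual: keeping the \emph{orientations} straight through the chain of identifications. The groups $S(O(n)\times O(s^+)\times O(s^-))$ and the determinant-line summands are present precisely to track signs, and one must check that the isometry $\nu\cong E_-|_C$ induced by $\nabla h$ respects the orientation $o$ on the nose (up to the deliberate $\det E_-$ twist built into the list in Lemma~\ref{emb}); getting the orientation of $TC$, of $\nu$, and of the two determinant lines to match so that Lemma~\ref{orispin} applies cleanly is the step that requires care. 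Once the orientation conventions are pinned down, the $Spin$-structure part is a formal consequence of Lemmas~\ref{spin0}, \ref{spin}, \ref{orispin}, and \ref{emb}.
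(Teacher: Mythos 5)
Your proposal follows the paper's proof essentially verbatim: restrict the bundle $W$ of Lemma~\ref{emb} to $C$, split $TY|_C\cong TC\oplus N$, identify $N\cong E_-|_C$ via $dh$, peel off a cancelling pair with Lemma~\ref{orispin}, and then read off the $G^+(n-s^-,s^+,0)$ structure from the converse direction of Lemma~\ref{emb}. The orientation bookkeeping you (rightly) flag as the delicate point is resolved in the paper by the specific choice $E_0=(\det TY\otimes\det E_+)|_C\oplus N$ and $E_1=\det E_-|_C\oplus E_-|_C$ with $\phi=i\oplus\psi$, where $i$ is the isomorphism $(\det TY\otimes\det E_+)|_C\cong\det E_-|_C$ supplied by $o$; the form $\det E_-\oplus E_-$ is what makes $E_1$ canonically oriented so that Lemma~\ref{orispin} applies without further twisting.
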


\begin{proof}We restrict the vector bundle
\[
\det TY \otimes \det E_+ \oplus TY \oplus E_+  \oplus \det E_- \oplus E_-
\]
to $C$. This is naturally isomorphic to the vector bundle
\[
\det TY|_C \otimes \det E_+|_C \oplus TC \oplus N \oplus E_+|_C \oplus \det E_-|_C \oplus E_-|_C
\]where $N$ is the normal bundle of $C$. 
We define the isomorphism $\psi \colon N \to E_-$ by using $dh$. 
By the definition of $G^{+}(n,s^+,s^-)$ structure, there exists an isomorphism $i \colon \det TY|_C \otimes \det E_+|_C \cong \det E_-|_C$. Let $F=\det TY|_C \otimes \det E_+|_C \oplus N \oplus \det E_-|_C \oplus E_-|_C,\; E_0=\det TY|_C \otimes \det E_+|_C \oplus N$, $E_1=\det E_-|_C \oplus E_-|_C$ and $\phi=i \oplus \psi$. They satisfy the assumption of Lemma \ref{orispin}, and hence we have the $Spin$ structure of $TC \oplus E_+$ as desired. From the latter statement of Lemma \ref{emb}, this $Spin$ structure defines the $G^+(n-s^-, s^+, 0)$ structure on $C$ as desired. 
\end{proof}

\begin{lemma}\label{bordism}
As an element of bordism group $\Omega_{n-s^-}^{G^+(n-s^-, s^+, 0)}(pt)$, $[(C,\mathfrak s _C (h))]$ is independent of $h$. 
\end{lemma}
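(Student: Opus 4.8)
The plan is to use a standard cobordism argument. Given two transverse sections $h_0, h_1 \in \Gamma(E_-)$, form the product $Y \times [0,1]$ with the $G^+(n,s^+,s^-)$ structure pulled back from $Y$ (the vector bundle $E_\pm$ and the orientation $o$ pull back, and the principal bundles $\tilde P, P$ pull back along the projection). I would then choose a section $H \in \Gamma(\mathrm{pr}_{E_-}^* E_-)$ over $Y \times [0,1]$ that restricts to $h_0$ on $Y \times \{0\}$, to $h_1$ on $Y \times \{1\}$, and is transverse to the zero section as a section over the manifold-with-boundary $Y \times [0,1]$; such an $H$ exists by a relative transversality (Thom) argument, starting from the given data on the boundary and perturbing in the interior. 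Then $W := H^{-1}(0)$ is a compact submanifold of $Y \times [0,1]$ with $\partial W = C_0 \sqcup (-C_1)$, where $C_i = h_i^{-1}(0)$.

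Next I would equip $W$ with the $G^+(n - s^-, s^+, 0)$ structure produced by Theorem~\ref{loc}, applied to the pulled-back $G^+(n,s^+,s^-)$ structure on $Y \times [0,1]$ and the section $H$. Here I need to observe that Theorem~\ref{loc} as stated is for closed manifolds, but its proof is purely local: it restricts the characteristic vector bundle $\det TY \otimes \det E_+ \oplus TY \oplus E_+ \oplus \det E_- \oplus E_-$ to the zero set, identifies the normal bundle with $E_-$ via $dH$, and applies Lemma~\ref{orispin}. All of these steps go through verbatim when $Y \times [0,1]$ is a manifold with boundary and $H$ is transverse including along the boundary, so $W$ gets a $G^+(n-s^-,s^+,0)$ structure $\mathfrak s_W(H)$. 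The remaining point is compatibility on the boundary: I would check that the restriction of $\mathfrak s_W(H)$ to $C_0$ (resp. $C_1$) agrees with $\mathfrak s_{C}(h_0)$ (resp. with the orientation-reversed structure $\mathfrak s_C(h_1)$). This follows because near $Y \times \{0\}$ we may arrange $H(y,t) = h_0(y)$, so the normal bundle of $W$ in $Y \times [0,1]$ splits compatibly with the normal bundle of $C_0$ in $Y$ plus the $[0,1]$-direction, and the $Spin$-structure constructions of Lemmas~\ref{spin0}--\ref{orispin} are natural under such splittings; the orientation of $\partial W$ induced from $W$ accounts for the sign on the $C_1$ end.

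Therefore $(W, \mathfrak s_W(H))$ is a $G^+(n-s^-, s^+, 0)$ bordism between $(C_0, \mathfrak s_C(h_0))$ and $(C_1, \mathfrak s_C(h_1))$, which gives $[(C, \mathfrak s_C(h_0))] = [(C, \mathfrak s_C(h_1))]$ in $\Omega_{n-s^-}^{G^+(n-s^-, s^+, 0)}(pt)$.

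The main obstacle I expect is the boundary-compatibility bookkeeping in the second step: verifying carefully that the $Spin$ structure on $TW \oplus E_+$ manufactured from Lemma~\ref{orispin} restricts on each boundary component to the one manufactured on $TC_i \oplus E_+$, with the correct orientation sign, and that this matches the identification of $\mathfrak s_W(H)$ with a genuine $G^+(n-s^-,s^+,0)$ structure via the last clause of Lemma~\ref{emb}. This is a naturality statement about the sequence of constructions (Lemma~\ref{emb} $\to$ restriction $\to$ Lemma~\ref{orispin} $\to$ Lemma~\ref{emb} again) under passing to a boundary collar, and while each piece is routine, keeping the orientations straight requires care.
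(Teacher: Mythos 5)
Your overall strategy is exactly the one the paper uses: take $Y \times [0,1]$ with a compatible structure, choose a relative transverse section $H$ interpolating between $h_0$ and $h_1$, and apply Theorem~\ref{loc} to produce the cobordism. However, there is one place where your write-up, taken literally, would fail: you say to equip $Y \times [0,1]$ with ``the $G^+(n,s^+,s^-)$ structure pulled back from $Y$.'' By Definition~\ref{def-G-+}, a $G^+(n,s^+,s^-)$ structure on a manifold requires that manifold to be $n$-dimensional, since the principal bundle $P$ must sit inside the orthonormal frame bundle of the tangent bundle. On the $(n+1)$-dimensional manifold $Y\times[0,1]$ the tangent bundle is $\pi^*TY \oplus \underline{\R}$, so the pulled-back principal bundle $\pi^*P$ does not embed in the frame bundle of $T(Y\times[0,1])$, and there is no ``pulled-back $G^+(n,s^+,s^-)$ structure.'' What is needed — and what the paper does — is the natural $G^+(n+1,s^+,s^-)$ structure on $Y\times[0,1]$ obtained by stabilizing: use $\det T(Y\times[0,1]) \cong \pi^*\det TY$ together with the trivial summand $\underline\R$ in $T(Y\times[0,1])$ and the obvious inclusion $G^+(n,s^+,s^-) \hookrightarrow G^+(n+1,s^+,s^-)$, or equivalently extend the $Spin$ structure of Lemma~\ref{emb} by the trivial $\underline{\R}$ factor.

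With that correction, Theorem~\ref{loc} applied to the $G^+(n+1,s^+,s^-)$ structure on $Y\times[0,1]$ and the section $\tilde h$ gives a $G^+(n+1-s^-,s^+,0)$ structure on $W=\tilde h^{-1}(0)$ — which is the right degree for a bordism in $\Omega_{n-s^-}^{G^+(n-s^-,s^+,0)}(pt)$, whereas your version would produce a structure of the wrong dimension $n-s^-$ on an $(n+1-s^-)$-dimensional $W$. The rest of your argument (relative transversality, boundary compatibility via arranging $H(y,t)=h_i(y)$ in collars, naturality of Lemmas~\ref{spin0}--\ref{orispin}) is sound, and in fact your boundary-compatibility discussion spells out a step the paper leaves implicit; once the dimension of the $G^+$ structure on $Y\times[0,1]$ is corrected, your proof agrees with the paper's.
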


\begin{proof}
Let $h, h'$ be transverse sections of $E_-$ and let $C=h^{-1}(0), C'=h^{-1}(0)$. We define a natural $G^+(n+1,s^+,s^-)$ structure on $Y\times [0,1]$ by using $G^+(n,s^+,s^-)$ structure on $Y$.  
 Let $\tilde h$ be a transverse section on $Y\times [0,1]$ such that $\tilde h |_{Y \times \{0\}}=h, \; \tilde h |_{Y \times \{1\}}=h'$. Then $\tilde h^{-1}(0)$ has a $G^+(n+1-s^-,s^+, 0)$ structure by Theorem \ref{loc}. This manifold with $G^+(n+1-s^-,s^+, 0)$ structure gives the cobordism from $[(C,\mathfrak s _C (h))]$ to $[(C',\mathfrak s _C' (h'))]$. 
\end{proof}

We now define the map $f$ of Theorem \ref{main}. 
\begin{definition}
Let $Y$ be an $n$ dimensional closed manifold with $G^{+}(n,s^+,s^-)$ structure $\mathfrak s =(\tilde{P},P,\pi,o,E_+, E_-)$. Set
\[f([Y,\mathfrak s])=[C,\mathfrak s_C].\] 
\end{definition}

\subsection{Localization by Witten deformation}\label{Witten_deformation}%Witten_deformation
In this subsection, we will prove the commutativity of the diagram in Theorem \ref{main}. We prove it by using Witten deformation. 
We prove some lemmas in a general setting rather than the setting in Theorem \ref{main}.

In the previous subsection, we define the $G^+(n-s^-,s^+,0)$ structure $\mathfrak s_C$ on a characteristic submanifold $C=h^{-1}(0)$ from a $G^+(n,s^+,s^-)$ structure $\mathfrak s$ on $Y$ in Theorem \ref{loc}. Now we consider the relation between the standard spinor bundle of $\mathfrak s_C$ and  the restriction of the standard spinor bundle $\Slash{\mathfrak S}$ of $\mathfrak s$ to $C$. First, we consider $\Slash{\mathfrak S} |_C$, the restriction of the standard $\Z/2\Z$ graded spinor bundle of $\mathfrak s$ to $C$.

\begin{proposition}\label{S_C}
Let $Y$ be an $n$ dimensional manifold with $G^+(n,s^+,s^-)$ structure $\mathfrak s$. 
We will denote by $\Slash{\mathfrak S}$ the standard $\Z/2\Z$ graded spinor bundle of $\mathfrak s$. Let $C=h^{-1}(0)$ and  let $\mathfrak s_C$ be the $G^+(n-s^-,s^+,0)$ structure on $C$ defined in Theorem \ref{loc}. We define a Clifford action on $\Slash{\mathfrak S} |_C$ by restricting the Clifford action $(c_{TY},c_{E_+}) \colon (TY \oplus E_+) \otimes \Slash{\mathfrak S} \to \Slash{\mathfrak S}$ to the vector bundle $(TC \oplus E_+ |_C) \otimes \Slash{\mathfrak S}|_C$ on $C$. We will denote by $(\tilde c_{TC}, \tilde c_{E_+})$ this restricted Clifford action. Note that the image of $(\tilde c_{TC}, \tilde c_{E_+})$ is $\Slash{\mathfrak S}|_C$. Then there exists an isomorphism $\Phi$ which has following properties: Let $\Slash{\mathfrak S} _C$ be the standard $\Z/2\Z$ graded spinor bundle of $\mathfrak s_C$. The isomorphism
\[
\Phi \colon \Slash{\mathfrak S} |_C \to Cl_+(N(C)) \hat{\otimes}\Slash{\mathfrak S} _C \hat{\otimes}V^{\ast}_{s^-}
\]
preserves the Clifford action of $TC \oplus E_+ |_C$ when we define a Clifford action to right hand side by
\begin{align*}
c_{TC}(x)\cdot \omega \otimes \phi \otimes v&:=(-1)^{\lvert \omega \rvert}\omega \otimes \tilde c_{TC}(x)\phi \otimes v, \\
c_{E_+}(w)\cdot \omega \otimes \phi \otimes v&:=(-1)^{\lvert \omega \rvert}\omega \otimes \tilde c_{E_+}(w)\phi \otimes v
\end{align*} for $\omega \otimes \phi \otimes v \in Cl_+(N(C)) \hat{\otimes}\Slash{\mathfrak S} _C \hat{\otimes}V^{\ast}_{s^-},\; x \in TC$ and $w \in E_+ |_C$. Moreover, $\Phi$ preserves the right $Cl_{(n+s^+, s^-)}$ action if we define the right $Cl_{(n+s^+, s^-)}$ action to $Cl_+(N(C)) \hat{\otimes}\Slash{\mathfrak S}_C \hat{\otimes}V^{\ast}_{s^-}$ by 
\[
\omega \otimes \phi \otimes v \cdot a \otimes b :=(-1)^{\lvert a \rvert\lvert v \rvert} \omega \otimes \phi \cdot a \otimes vb\] for $\omega \otimes \phi \otimes v \in Cl_+(N(C)) \hat{\otimes}\Slash{\mathfrak S} _C \hat{\otimes}V^{\ast}_{s^-}$ and $a\otimes b \in Cl_{(n+s^+ -s^-,0)} \hat{\otimes} Cl_{(s^-, s^-)}$. 
\end{proposition}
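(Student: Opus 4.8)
The plan is to restrict every bundle to $C$, reduce the structure group there, and then obtain $\Phi$ from an explicit isomorphism of the fibre modules $Cl_n\hat{\otimes}Cl_{(s^+,s^-)}$ which is equivariant for the reduced structure group; the analytic content is nil, and the whole point is bookkeeping with $\Z/2\Z$-graded tensor products.

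First I would set up the reduction of structure group along $C$. Following the proof of Theorem~\ref{loc}, fix (via a tubular neighbourhood) the orthogonal splitting $TY|_C = TC\oplus N$, the isometry $\psi\colon N\xrightarrow{\sim} E_-|_C$ induced by $dh$, and the isometry $i\colon \det TY|_C\otimes\det E_+|_C\xrightarrow{\sim}\det E_-|_C$ coming from the orientation $o$. These reduce $P|_C$ to a principal $K_0$-bundle with $K_0 = S(O(n-s^-)\times O(s^+))\times O(s^-)$, where the first factor is the frame datum of $(TC, E_+|_C)$ and the second that of $E_-|_C\cong N$; the determinant constraint defining $S(O(n)\times O(s^+)\times O(s^-))$ collapses to the one for $(TC,E_+|_C)$ because, under $\det N\cong\det E_-|_C$, the $E_-$-contribution to it appears squared. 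Correspondingly $\tilde P|_C$ reduces to a principal $\tilde K$-bundle $\tilde Q$ with $\tilde K := p^{-1}(K_0)\subset G^{+}(n,s^+,s^-)$, and by the construction of $\mathfrak s_C$ this $\tilde Q$ is assembled from the principal $G^{+}(n-s^-,s^+,0)$-bundle of $\mathfrak s_C$ and the orthonormal frame bundle of $E_-|_C$; in particular $\tilde K$ respects the factorisation $Cl_n\hat{\otimes}Cl_{(s^+,s^-)} = (Cl_{n-s^-}\hat{\otimes}Cl_{s^+})\hat{\otimes}Cl_{(s^-,s^-)}$ described next.

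Then I would produce the algebraic model of $\Phi$. Splitting $\R^n = \R^{n-s^-}\oplus\R^{s^-}$ along $TY|_C = TC\oplus N$ gives $Cl_n = Cl_{n-s^-}\hat{\otimes}Cl_{s^-}$, and regrouping the ``normal'' $\R^{s^-}$ with the $\R^{s^-}$ inside $\R^{s^+}\oplus\R^{s^-}$ yields
\[
Cl_n\hat{\otimes}Cl_{(s^+,s^-)}\;\cong\;\bigl(Cl_{n-s^-}\hat{\otimes}Cl_{s^+}\bigr)\hat{\otimes}\bigl(Cl_{s^-}\hat{\otimes}Cl_{-s^-}\bigr)\;=\;\bigl(Cl_{n-s^-}\hat{\otimes}Cl_{s^+}\bigr)\hat{\otimes}Cl_{(s^-,s^-)}.
\]
The first factor is precisely the standard spinor module of a $G^{+}(n-s^-,s^+,0)$-structure. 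For the second, the graded isomorphism $Cl_{(s^-,s^-)}\cong Cl_{(1,1)}^{s^-\hat{\otimes}}\cong\operatorname{End}(V_{s^-})$ exhibits it, as a graded bimodule over itself, as $V_{s^-}\hat{\otimes}V^*_{s^-}$; moreover the left $Cl_{(s^-,s^-)}$-module $V_{s^-}$ is identified with $Cl_{s^-}$ carrying left multiplication (the $Cl_{s^-}$-factor) together with the $\epsilon$-twisted action $v\cdot\phi=(\epsilon\phi)v$ (the $Cl_{-s^-}$-factor). Reordering the three tensor factors into the order of the statement gives a linear isomorphism $Cl_n\hat{\otimes}Cl_{(s^+,s^-)}\cong Cl_{s^-}\hat{\otimes}(Cl_{n-s^-}\hat{\otimes}Cl_{s^+})\hat{\otimes}V^*_{s^-}$. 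I then check it is $\tilde K$-equivariant: the $\R^{n-s^-}\oplus\R^{s^+}$-generators of $\tilde K$ act only on the middle factor through the $G^{+}(n-s^-,s^+,0)$-representation, while the ``diagonal $O(s^-)$''-part of $\tilde K$ lies in $Cl_{(s^-,s^-)}=\operatorname{End}(V_{s^-})$ and acts there by left multiplication, hence on $Cl_{s^-}$ via the $O(s^-)$-action on $Cl_{s^-}$ and \emph{trivially} on $V^*_{s^-}$ (left multiplication on $\operatorname{End}(V_{s^-})$ moves only the $V_{s^-}$-slot). Therefore the three factors globalise over $C$ to $Cl_+(N)$, to $\Slash{\mathfrak S}_C$, and to the trivial bundle with fibre $V^*_{s^-}$, and the isomorphism globalises to the desired $\Phi$.

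Finally, the compatibilities are read off from the model, the signs being the Koszul signs of the regrouping. The Clifford action of $TC\oplus E_+|_C$ is left multiplication by $\R^{n-s^-}\oplus\R^{s^+}\subset Cl_{n-s^-}\hat{\otimes}Cl_{s^+}$, which hits only the $\Slash{\mathfrak S}_C$-slot; commuting past the $Cl_+(N)$-slot produces the sign $(-1)^{|\omega|}$. Writing $Cl_{(n+s^+,s^-)}=Cl_{(n+s^+-s^-,0)}\hat{\otimes}Cl_{(s^-,s^-)}$, the right action of the first summand is the right $Cl_{(n-s^-+s^+,0)}$-action of $\mathfrak s_C$ on $\Slash{\mathfrak S}_C$ and that of the second is the natural right $Cl_{(s^-,s^-)}$-action on $V^*_{s^-}$ (right multiplication on $\operatorname{End}(V_{s^-})$ moves only the $V^*_{s^-}$-slot), with $(-1)^{|a||v|}$ the sign for interchanging $a$ with the $V^*_{s^-}$-entry, while $Cl_+(N)$ is untouched by both. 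The main difficulty is thus not analytic but a matter of keeping everything straight at once: one must verify that the regrouping of the two copies of $\R^{s^-}$ and the identification $Cl_{(s^-,s^-)}\cong V_{s^-}\hat{\otimes}V^*_{s^-}$ really are compatible with the way $\tilde K$ sits in $G^{+}(n,s^+,s^-)$ — in particular that the ``normal'' and ``$E_-$'' copies of $O(s^-)$ are locked together by $\psi$, so that the first tensor factor globalises to the Clifford bundle $Cl_+(N)$ of $N$ and the $V^*_{s^-}$-factor to a \emph{trivial} bundle — and that the $\epsilon$-twists built into $\Slash{\mathfrak S}$ and into its left $Cl_{\mp n}\hat{\otimes}Cl_{(s^-,s^+)}$-action pass through the decomposition to give exactly the signs in the statement.
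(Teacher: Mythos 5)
Your proposal follows essentially the same route as the paper's proof: both reduce the structure group along $C$ to (the cover of) $S(O(n-s^-)\times O(s^+))\times O(s^-)$, factor the fibre $S=Cl_{(n+s^+,s^-)}$ as $Cl_{(n+s^+-s^-,0)}\hat{\otimes}Cl_{(s^-,s^-)}$, and then split the second factor into a ``left-module slot'' carrying the $O(s^-)$ representation (globalising to $Cl_+(N)$) and a ``right-module slot'' that is $O(s^-)$-trivial (globalising to the constant bundle $V^*_{s^-}$). The paper reaches this by computing the common $+1$-eigenspace of the lifted reflections $\tilde g_v = c'_n(v)c'_-(v)$ and invoking uniqueness of the irreducible $Cl_{(s^-,s^-)}$-module, while you phrase the same decomposition structurally via $Cl_{(s^-,s^-)}\cong\operatorname{End}(V_{s^-})\cong V_{s^-}\hat{\otimes}V^*_{s^-}$ with $V_{s^-}\cong Cl_{s^-}$; the one point you compress is the paper's explicit determination (via the embedding of Lemma~\ref{emb}) that the spin lift of the diagonal reflection is $+c'_n(v)c'_-(v)$ rather than $-c'_n(v)c'_-(v)$, which is precisely what pins down which eigenspace globalises to the trivial $V^*_{s^-}$-factor --- you acknowledge this as a sign to be checked, but it is the one place where a concrete computation is genuinely needed.
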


\begin{proof}
From the proof of Theorem \ref{loc}, we see the structure group of the $Spin$ structure of $\det N(C) \oplus \det E_- \oplus N(C) \oplus E_-$ reduces the subgroup of $Spin(2s^- +2)$ which is isomorphic to $O(s^-)$. If we write $O(s^-)$ for the subgroup, then the structure group of $\Slash{\mathfrak S}|_C$  is $G^+(n-s^-, s^+,0) \times O(s^-)$. 
Write $G=G^+(n-s^-, s^+,0) \times O(s^-)$. 

Let us denote by $S=Cl_{(n+s^+, s^-)}$ a representation space whose associated vector bundle is $\Slash{\mathfrak S}$. We describe the action of $G$ on $S$. We define $\tilde g_v \in O(s^-)$ to be the natural lift of $g_v$ to $Spin(2s^- +2)$ given in Lemma \ref{spin0} where $g_v$ is the composition of the reflections of vectors$(e,0, e,0), (0,v, 0,v) \in \det \R^{s^-} \oplus \R^{s^-} \oplus \det \R^{s^-} \oplus \R^{s^-}$ $(\lvert e \rvert=\lvert v \rvert=1)$. The element $g_v$ is independent of the choice of $e$. Elements in the group $O(s^-) \subset Spin(2s^- +2)$ are the products of finitely many elements of the form $\tilde g_v$ for some $v$ and $\pm 1$. The Clifford multiplications $c'_n, c'_-$ are $G$ equivariant and hence the action of $\tilde g_v$ is given by $\pm c'_n(v)c'_-(v)$. 

The sign of $\tilde g_v =\pm c'_n(v)c'_-(v)$ is determined by how to embed the group $G^+(n, s^+, s^-)$ to a $Spin$ group. In our convention of Lemma \ref{emb}, the image of $c'_n(e_i)c'_-(e_i)$ of the embedding is $e_0 e_i e'_0 e'_i$ and this coincides with $\tilde g_{e_i}$ which is a lift of $g_v$ given in Lemma \ref{spin0}. Thus we have $\tilde g_v =c'_n(v)c'_-(v)$. 

We will denote by $S_C$ a subspace of $S$ which is the intersection of $+1$-eigenspaces of $\tilde g_v=c'_n(v)c'_-(v)$ for all $v \in S(\R^{s^-})$. 
The subspace $S_C$ coincides with the intersections of $+1$ eigenspaces of $c'_n(e_i)c'_-(e_i)$($i=1, \dots,s^-$) where $\{ e_1, \dots, e_{s^-} \}$ is an orthonormal basis of $\R^{s^-}$. Note that the action of $G^+(n-s^-, s^+, 0)$ preserves $S_C$ because this action commutes with $c'_n(v)c'_-(v)$ for all $v$. The definition of $S_C$ immediately implies that the action of $O(s^-)$ on $S_C$ is trivial. 

Let us show that $S_C$ coinsides with $Cl_{(n+s^+ -s^-, 0)} \subset S$. Recall that the standard spinor bundle $\Slash{\mathfrak S}$ is the associated bundle with the representation $Cl_{(n+s^+,s^-)}=Cl_{(n-s^- +s^+,0)} \hat{\otimes} Cl_{(s^-,s^-)}$. An element $\tilde g_v = c'_n(v)c'_-(v)$ acts on $Cl_{(n-s^- +s^+,0)}$ trivially and on $Cl_{(s^-,s^-)}$ by left multiplication of $Cl_{(s^-,s^-)}$. We will denote by $\tilde V$ the intersection of the $+1$-eigenspaces of $c'_n(v)c'_-(v)$ in $Cl_{(s^-,s^-)}$ for all $v \in S(\R^{s^-})$. The dimension of $\tilde V$ is $2^{s^-}$. The action of $c'_n(v)c'_-(v)$ commutes with the right $Cl_{(s^-,s^-)}$ action. The only irreducible representation of $Cl_{(s^-,s^-)}$ is $V_{s^-}$ and its dimension is $2^{s^-}$. Hence there is a isomorphism $\psi \colon V^{\ast}_{s^-} \to \tilde V$. Using $\psi$, we have the isomorphism $Cl_{(n+ s^+ -s^-, 0)} \hat{\otimes} V^*_{s^-} \to S_C$ by $\xi \otimes v \mapsto \xi \psi(v)$. If we give the $G$ action on $Cl_{(s^-,0)}$ by $g \cdot x \mapsto gxg^{-1}$, we have the $G$ equivariant isomorphism
\[
\begin{array}{rccc}
\Psi_0 \colon& Cl_{(s^-,0)} \hat{\otimes}Cl_{(n+ s^+ -s^-, 0)} \hat{\otimes} V^*_{s^-}& \longrightarrow &Cl_{(n+s^+,s^-)} \\
&\rotatebox{90}{$\in$} & & \rotatebox{90}{$\in$} \\
&x \otimes \xi \otimes v & \longmapsto & x \cdot \xi \cdot \psi(v)
\end{array}.
\]

The associated vector bundles of $Cl_{(s^-,0)}, Cl_{(n+ s^+ -s^-, 0)}$ and $V^*_{s^-}$ is $Cl_{+}(N(C)), \Slash{\mathfrak S}_C$ and the trivial bundle $V^*_{s^-}$ respectively. Thus we have the map
\[
\Psi \colon Cl_+(N(C)) \hat{\otimes}\Slash{\mathfrak S}_C \hat{\otimes} V^*_{s^-} \longrightarrow \Slash{\mathfrak S}|_C .
\]
We define the map $\Phi$ to be the inverse of the map $\Psi$. It is easy to check to see that $\Phi$ preserves the Clifford action of $TC \otimes E_+|_C$ and right $Cl_{(n+s^+, s^-)}$ action. 
%We have the associated vector bundle of the representation $S_C$ with the principal $G^+(n-s^-,s^+,0)$ bundle is $\Slash{\mathfrak S}_C \hat{\otimes}V^{\ast}_{s^-}$. %%%%%%From now on, we will identify $E_- |_C$ and $N(C)$ a normal bundle of $C$ by the isomorphism $dh$. 
%From the construction, $\Slash{\mathfrak S}_C \hat{\otimes}V^{\ast}_{s^-}$ is%%%%%% a subbundle of $\Slash{\mathfrak S}|_C$ and is 
% a $\Z/2\Z$ graded generalized spinor bundle with left $Cl_{(s^-, n+s^+)}$ action. 
%%%%%%%We can check $\Slash{\mathfrak S} |_C$ is generated by $\Slash{\mathfrak S}_C \hat{\otimes}V^{\ast}_{s^-}$ as $Cl_+(N(C))$ module by comparing the dimensions of $\Slash{\mathfrak S} |_C$ and the $Cl_+(N(C))$ module generated by $\Slash{\mathfrak S}_C \hat{\otimes}V^{\ast}_{s^-}$. We define the isomorphism $\Phi$ is the inverse of the following map:
%%%%%%%The Clifford action of $TC \oplus (E_+ |_C)$ on $C$ anti-commutes with that of $N(C)$. We see the isomorphism $\Slash{\mathfrak S} |_C \cong Cl_+(N(C)) \hat{\otimes}\Slash{\mathfrak S} _C \hat{\otimes}V^{\ast}_{s^-}$ preserves the Clifford action of $TC \oplus (E_+ |_C)$. One can easily check that this isomorphism also preserves the additional $Cl_{(n+ s^+, s^-)}$ actions. %%%%%%%%%%
\end{proof}

%\begin{remark}
%上の補題の結果は, 補題\ref{emb}の$Cl_{n+s^+}$の埋め込みのコンベンションに依存している. しかし, 主定理の主張はこのコンベンションの取り方に依存しない. もし, 補題\ref{emb}の$Cl_{n+s^+}$の埋め込みが, 各基底について$-1$倍であったとする. このとき, 上の補題の主張は$O(s^-)$の$l$への作用は$O(s^-)$の連結成分からくる$\pm 1$の作用となる. このとき, 指数が局所化する$G^+(n-s^-,s^+,0)$構造は$\mathfrak s_C$で与えられる$TC \oplus E_+$の$Spin$構造を$\det N(C)$の局所系でずらしたものである. この$G^+(n-s^-,s^+,0)$構造を$\mathfrak{s}_C + \det N(C)$と書くことにする. このとき, $\mathfrak{s}_C$と$\mathfrak{s}_C + \det N(C)$は$G^+(n-s^-,s^+,0)$構造としてコボルダントである. 
%なんとなれば, 補題\ref{emb}の$Cl_{n+s^+}$の埋め込みのコンベンションをこの論文で採用した方に固定すると$h \in \Gamma(E_-)$の代わりに$-h$で$C$上に誘導する$G^+(n-s^-,s^+,0)$構造が$\mathfrak{s}_C + \det N(C)$であり, これは補題\ref{bordism}からコボルダントである. 
%\end{remark}

In this section we identifies $\Slash{\mathfrak S}|_C$ with $ Cl_+(N(C)) \hat{\otimes}\Slash{\mathfrak S}_C \hat{\otimes} V^*_{s^-}$ by the map $\Phi$. 

Next we prove the localization of analytic index in Proposition \ref{Wittendeformation}. 
We identify a tubular neighborhood of $C$, denoted by $U(C)$, with the open disk bundle of the normal bundle of $C$,  $B(N(C))=\{v \in N(C) \mid \lvert v \rvert <1 \}$. 
We will denote by $\pi \colon N(C) \to C$ the projection of the normal bundle. 
Let us outline the proof of the localization. 
\begin{itemize}
\item[Step.1] We first formulate the index of a Dirac type operator acting on the sections of the vector bundle $\pi^* (\Slash{\mathfrak S} |_C )$ over $N(C)$. Since $N(C)$ is not closed, we need behavior of its end. We will see the index of $N(C)$ coincides with the index of the $G^+(n,s^-,s^+)$ structure $\mathfrak s_C$ on $C$. 
\item[Step.2] The analytic index of the $G^+(n,s^+,s^-)$ structure $\mathfrak s$ on $Y$ coincides with the index of Step 1. In this argument we deform the Dirac type operator in Definition\ref{defineindex}. We give prove some analytic lemmas and propositions of this step in the Appendix. 
\end{itemize}

\subsubsection{Step.1}

We begin with Step 1. First, we consider the trivial $G^+(n,0,n)$ structure on $\R^n$. This structure appears in a fiberwise way when we consider the operator on $N(C)$. Let $\mathfrak s_0$ be a trivial $G^+(n,0,n)$ structure on $\R^n$. We will denote by $h(x)=x$ the section of $E_-=\R^n \times \R^n$ and we have the isomorphism from $T\R^n$ to $E_-$ by using $dh$. We have the reduction of the structure group of $\mathfrak s_0$ to the subgroup $G \subset G^-(n,0,n)$ which is naturally isomorphic to $O(n)$ by using Lemma \ref{spin0}. Let $\Slash{\mathfrak S}_f$ denote the standard $\Z/2$ graded spinor bundle of $\mathfrak s_0$. Let $C=h^{-1}(0)=\{0\}$, be a characteristic submanifold. From the Proposition \ref{S_C}, we identify $\Slash{\mathfrak S}_f|_C$ with $Cl_+(\R^n)\hat{\otimes}\underline{\R}\hat{\otimes}V^{\ast}_n$. Note that $\Slash{\mathfrak S}_C \cong C \times \R=\underline{\R}$ is a vector bundle on a point. Let $L$ be a trivial vector bundle on $\R^n$. 

\begin{lemma}\label{fiber_harmonic}%ファイバー方向の調和振動子
We will denote by $\mathcal S$ a set of rapidly decreasing sections of $\Slash{\mathfrak S}_f\otimes L$. 
We will denote by $D'_m$ a differential operator acting on sections of $\Gamma(\Slash{\mathfrak S}_f \otimes L)$ given by
\[D'_m=\sum_{i=1}^{n}\left( c_{T\R^n}(dx^i)\frac{\partial}{\partial x^i}+mc_{E_-}(e_i)x^i \right).\]
The operator $D'_m$ is independent of the choice of the orthonormal basis of $\R^n$. This operator commutes with the right $Cl_{(n,n)}$ action, and there is an isomorphism between $\ker D'_m \cap \mathcal S$ and  $\underline{\R}\hat{\otimes}V^{\ast}_n\otimes L$ as right $Cl_{(n,n)}$ modules. 
\end{lemma}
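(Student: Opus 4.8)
The plan is to recognize $D'_m$ as a direct sum, over the fibers, of harmonic-oscillator-type operators whose kernels are classical. Concretely, after fixing an orthonormal basis $e_1,\dots,e_n$ of $\R^n$, I would introduce the creation/annihilation operators
\[
a_i=\tfrac{\partial}{\partial x^i}+m\, x^i,\qquad a_i^{\ast}=-\tfrac{\partial}{\partial x^i}+m\, x^i,
\]
and write $D'_m=\sum_i c_{T\R^n}(dx^i)\,\partial/\partial x^i+m\,c_{E_-}(e_i)\,x^i$. Since the isometry $dh$ identifies $T\R^n$ with $E_-$, and since on $\Slash{\mathfrak S}_f|_C$ the relation $c_{T\R^n}(e_i)=-c_{E_-}(e_i)$ holds up to the sign coming from Lemma \ref{spin0} (this is exactly the relation $\tilde g_v=c'_n(v)c'_-(v)$ used in Proposition \ref{S_C}), a short computation shows $(D'_m)^2=\sum_i(a_i^{\ast}a_i+\text{(zeroth order curvature/commutator terms)})$; the zeroth-order terms assemble into $\pm m\sum_i c_{T\R^n}(e_i)c_{E_-}(e_i)$, which is a sum of commuting involutions. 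Restricting to rapidly decreasing sections, $D'_m$ is essentially self-adjoint (up to the $\epsilon$-twist it is skew-adjoint as required by the conventions of Section 2), so $\ker(D'_m)\cap\mathcal S=\ker\big((D'_m)^2\big)\cap\mathcal S$, and the spectral analysis of the harmonic oscillator gives that this kernel is the lowest Landau level: the Gaussian $e^{-m|x|^2/2}$ tensored with the subspace of $\Slash{\mathfrak S}_f|_0\otimes L$ on which every $c_{T\R^n}(e_i)c_{E_-}(e_i)$ acts by the appropriate sign. That eigenspace is precisely $S_C\otimes L$, which by Proposition \ref{S_C} (in the present special case $C=\{0\}$, $s^+=0$, $s^-=n$) is $\underline{\R}\,\hat\otimes\,V^{\ast}_n\otimes L$.

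The individual steps I would carry out, in order: (1) check that $D'_m$ is independent of the orthonormal basis — this is immediate since $\sum_i c_{T\R^n}(dx^i)\partial_i$ is the ordinary Dirac operator and $\sum_i c_{E_-}(e_i)x^i$ is the Clifford-image of the tautological section $h(x)=x$, both basis-free; (2) verify that $D'_m$ commutes with the right $Cl_{(n,n)}$ action — this follows because the left Clifford multiplications $c_{T\R^n}$, $c_{E_-}$ and the coefficient functions are $G$-equivariant and the right action is defined through the $\epsilon$-twist exactly as in the Remark after the spinor-bundle definition, so it commutes with anything built from left multiplication and differentiation; (3) compute $(D'_m)^2$ via the Lichnerowicz-type expansion, tracking the cross terms $[c_{T\R^n}(dx^i)\partial_i,\,m\,c_{E_-}(e_j)x^j]=\delta_{ij}\,m\,c_{T\R^n}(e_i)c_{E_-}(e_i)$ plus the flat-connection term $-\sum_i\partial_i^2+m^2|x|^2$; (4) diagonalize: on each joint eigenspace of the commuting involutions $c_{T\R^n}(e_i)c_{E_-}(e_i)$ the operator $(D'_m)^2$ becomes $\sum_i(-\partial_i^2+m^2(x^i)^2\pm m)$, a sum of shifted harmonic oscillators, whose kernel on $\mathcal S$ is one-dimensional (the ground state) exactly on the eigenspace where all signs are $+$, i.e.\ $S_C$; (5) identify the resulting module, with its residual right $Cl_{(n,n)}$ action, with $\underline{\R}\,\hat\otimes\,V^{\ast}_n\otimes L$ by invoking Proposition \ref{S_C} with the identification $\psi\colon V^{\ast}_n\to\tilde V$ already constructed there, and note the right $Cl_{(n,n)}$-module structure matches because the Gaussian factor carries no Clifford action.

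The main obstacle I expect is bookkeeping of signs and gradings: getting the sign in $\tilde g_v=c'_n(v)c'_-(v)$ to propagate correctly through $(D'_m)^2$ so that the ground state sits on $S_C$ rather than on its complement, and checking that the right $Cl_{(n,n)}$-action on $\ker D'_m\cap\mathcal S$ is the one on $V^{\ast}_n$ and not its dual or a twist — these hinge on the same conventions (the choice of $(\rho_1,V_1)$ and the embedding of Lemma \ref{emb}) that Proposition \ref{S_C} already pins down, so the honest work is to reuse that computation verbatim in the fiberwise setting rather than to prove anything genuinely new. The analytic points (essential self-adjointness of $D'_m$ on $\mathcal S$, discreteness of the spectrum, closedness of $\ker D'_m$) are standard for Schrödinger operators with confining quadratic potential and I would either cite them or defer them to the Appendix as the text indicates.
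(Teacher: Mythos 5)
Your approach is essentially the same as the paper's: square the operator, recognize a harmonic oscillator plus a zeroth-order sum of commuting involutions $c_{T\R^n}(dx^i)c_{E_-}(e_i)$, and isolate the Gaussian ground state tensored with $S_C\otimes L$. The one substantive slip is the overall sign of $(D'_m)^2$, and it is not merely cosmetic. Since $D'_m$ is honestly skew-symmetric (no $\epsilon$-twist is needed: $c_{T\R^n}(dx^i)\partial_i$ is a symmetric Clifford involution times the skew operator $\partial_i$, and $mc_{E_-}(e_i)x^i$ is a skew-adjoint Clifford element times symmetric multiplication by $x^i$), its square must be \emph{negative} semi-definite. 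The paper computes
\[
(D'_m)^2=\sum_{i}\Bigl(\partial_i^2-m^2(x^i)^2+m\,c_{T\R^n}(dx^i)c_{E_-}(e_i)\Bigr)=-H+m\sum_i c_{T\R^n}(dx^i)c_{E_-}(e_i),
\]
with $H\ge nm$ on $\mathcal S$. You wrote $(D'_m)^2=\sum_i\bigl(-\partial_i^2+m^2(x^i)^2\pm m\bigr)$, which is positive semi-definite and is the negative of the correct expression. Followed literally, your formula would place the kernel on the joint $-1$-eigenspace of the involutions $c_{T\R^n}(dx^i)c_{E_-}(e_i)$ — the complement of $S_C$ — yet you then (correctly) assert the kernel sits on the $+1$-eigenspace, so your formula and your conclusion are mutually inconsistent. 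This is precisely the sign bookkeeping you flagged as the danger; the paper resolves it by observing that $-H$ has maximum $-nm$ (attained only on the Gaussian) while the zeroth-order term has maximum $+nm$ (attained only on the $+1$-eigenspace), and that vanishing of the sum forces both maxima simultaneously. The rest of your outline — basis-independence because $D'_m$ is the intrinsic Dirac operator plus Clifford multiplication by the tautological section $h(x)=x$, commutation with the right $Cl_{(n,n)}$ action because that action is built from the left Clifford multiplication and the $\epsilon$-twist which both commute with $D'_m$, and the final identification with $\underline{\R}\hat{\otimes}V^{\ast}_n\otimes L$ via the map $\psi$ of Proposition \ref{S_C} with the Gaussian carrying no Clifford action — matches the paper and is correct.
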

\begin{proof}
We see at once that $D'_m$ is independent of choice of the orthonormal basis of $\R^n$ by direct calculation. 

Let $\phi$ be a rapidly decreasing section. We have $D'_m \phi=0$ if and only if $(D'_m)^2 \phi=0$ because $D'_m$ is a skew-symmetric operator. We have
\begin{align*}
(D'_m)^2&=\sum_{i=1}^n \left( \left( \frac{\partial}{\partial x^i} \right)^2 -m^2(x^i)^2 + m c_{T\R^n}(dx^i)c_{E_-}(e_i)\right) \\
&=-H + \sum_{i=1}^n \left(m c_{T\R^n}(dx^i)c_{E_-}(e_i)\right)
\end{align*}
 where $H$ is a harmonic oscillator acting on smooth functions $\R^n \to \R$. 
It is well known that $H$ has only discrete spectrum and each eigenspace is $1$-dimensional. The eigenvalues are $nm, 2nm, 3nm, \dots$ and each eigenfunction is rapidly decreasing. In particular, the eigenfunction of $nm$ is $e^{-m\lvert x \rvert^2 /2}$ and this does not depend on the choice of orthonormal basis of $\R^n$. The eigenvalues of $m c_{T\R^n}(dx^i)c_{E_-}(e_i)$ are $\pm m$ for all $i$. Hence the kernel of $(D'_m)^2$ is in the intersection of the $+1$ eigenspace of $c_{T\R^n}(dx^i)c_{E_-}(e_i)$ and $nm$ eigenspace of $H$. The intersection of the $+1$ eigenspace of $c_{T\R^n}(dx^i)c_{E_-}(e_i)$ for all $i$ coinsides with $\underline{\R}\hat{\otimes}V^{\ast}_n\otimes L$ from the constructuion of the map $\Phi$ on Proposition\ref{S_C}. Thus we have $\ker D'_m \cap \mathcal S=\underline{\R}\hat{\otimes}V^{\ast}_n\otimes L \cdot e^{-m\lvert x \rvert^2 /2}$.  
\end{proof}

\begin{definition}\label{u0}
We will denote by $\tilde u _0$ a function on $\R^n$ given by 
\[\frac{ e^{-m\lvert x \rvert^2 /2}}{\lVert e^{-m\lvert x \rvert^2 /2} \rVert_{L^2(\R^n)} }.\]
\end{definition}

\begin{definition}\label{E_G+} %ベクトル束上の構造と作用素の定義
We will follow the notation of the statement and the proof of Proposition \ref{S_C}. We deform a metric of $Y$ if necessary and we identify a tubular neighborhood $U(C)$ of $C$ with $B(N(C))=\{v \in N(C) \mid \lvert v \rvert <1 \}$ as a Riemannian manifold. Let us denote by $\pi \colon N(C) \to C$ the projection. We define a $G^+(n,s^+,s^-)$ structure on $B(N(C))$ by restricting the $G^+(n,s^+,s^-)$ structure $\mathfrak s$ on $Y$. By abuse of notation, we use the same letter $\mathfrak s$ for this $G^+(n,s^+,s^-)$ structure on $B(N(C))$ and we write $\Slash{\mathfrak S}$ instead of $\Slash{\mathfrak S}|_{U(C)}$. The structure group of $\mathfrak s$ reduces to $G=G^+(n-s^-, s^+,0) \times O(s^-)$ by using the isomorphism $dh \colon N(C) \cong E_-$. Let $\Slash{D}_m=\Slash{D}+mc_-(h)$ where $\Slash{D}$ is a Dirac operator of $\Slash{\mathfrak S}$. We abbreviate $c_-(h)$ to $h$. Note that $\Slash{D}_m$ is an anti-symmetric operator. 
Let $A$ be a connection such that $c_{TY} \circ A= \Slash D$. Perturbing $A$, we may asuume that $\pi^* (A|_C)=A$ on $B(N(C))$. 
\end{definition}

\begin{lemma}\label{D'}
Let $\Slash{D}'_C$ be the Dirac operator defined by the Clifford action $c_{TC} \colon TC \otimes \Slash{\mathfrak S}|_C \to \Slash{\mathfrak S}|_C $. Then we have a decomposition
\[
\Slash D = D_C + D_f
\]
with the following property:  
\begin{itemize}
\item On $U \times \R^{s^-}$, which is a trivialization of $N(C)$, 
\[
D_C = \Slash{D}'_C |_U, \; D_f = \sum_{k=1}^{s^-}c_{TY}(d\xi_i)\frac{\partial}{\partial \xi_i}. 
\]
Here $c_{TY}$ is the restriction of the Clifford multiplication of $\mathfrak s$ to $\pi^* N(C) \subset TN(C)$ and 
$(\xi_1, \dots,\xi_{s^-})$ are coordinates of $\R^{s^-}$, which is a fiber of $N(C)$. 

\item $D_C$ and $D_f$ are anti-commutes. 
\item Both $D_C$ and $D_f$ are anti-symmetric with respect to the $L^2$ inner product on $N(C)$. 
\end{itemize}
\end{lemma}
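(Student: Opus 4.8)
The plan is to work locally over a trivialization $U \times \R^{s^-}$ of the tubular neighborhood $B(N(C))$, with $U \subset C$ an open set over which $N(C)$ is trivial, and to exploit the product assumption $\pi^*(A|_C) = A$ arranged in Definition \ref{E_G+}. Over such a patch, the tangent bundle of $N(C)$ splits orthometrically as $T(N(C))|_{U\times\R^{s^-}} \cong \pi^* TC \oplus \pi^* N(C)$, and correspondingly the de Rham differential of $N(C)$ splits into a ``horizontal'' part along $C$ and a ``vertical'' part $\sum_{k=1}^{s^-} d\xi_k\, \partial/\partial\xi_k$ along the fiber. Since the connection $A$ is pulled back from $C$, the covariant derivative $\nabla^A$ splits the same way with no cross terms, and the Dirac operator $\Slash D = c_{TY}\circ \nabla^A$ decomposes accordingly into a piece $D_C$ using horizontal Clifford multiplication and horizontal differentiation, and a piece $D_f = \sum_{k=1}^{s^-} c_{TY}(d\xi_k)\partial/\partial\xi_k$ using the vertical directions. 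First I would write out this splitting carefully and check that the horizontal piece, acting fiberwise at each point of $\R^{s^-}$, is exactly the Dirac operator $\Slash D'_C|_U$ attached to the restricted Clifford action $c_{TC}$ on $\Slash{\mathfrak S}|_C$ — here one uses that $\Slash{\mathfrak S}|_{U(C)} = \pi^*(\Slash{\mathfrak S}|_C)$ as a bundle with connection, again by the product assumption.

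Next I would verify the three asserted properties. Anti-commutativity $D_C D_f + D_f D_C = 0$ reduces to the Clifford relations: each $c_{TY}(d\xi_k)$ anti-commutes with every $c_{TY}(e_j)$ for $e_j$ horizontal (distinct orthonormal Clifford generators anti-commute), the coefficients of $D_C$ are independent of the fiber coordinates $\xi_k$ (product structure), and the horizontal and vertical partial derivatives commute — so all cross terms cancel in pairs. Anti-symmetry of $D_f$ with respect to the $L^2$ inner product on $N(C)$ follows from integration by parts in the fiber directions together with the skew-adjointness of Clifford multiplication by unit vectors in our conventions (the relevant generators square to $-1$), and anti-symmetry of $D_C$ then follows since $\Slash D$ is anti-symmetric and $D_C = \Slash D - D_f$. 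One should note that the metric on $B(N(C))$ has been chosen (Definition \ref{E_G+}) so that the fibers are flat and orthogonal to $C$, which is what makes the fiber integration-by-parts clean; strictly, the splitting $\Slash D = D_C + D_f$ is defined globally by this local recipe, and one checks the local formulas are patch-independent because they are written in terms of the intrinsic horizontal/vertical decomposition.

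The one genuinely delicate point is the claim that the decomposition is \emph{global} and well defined, not merely a local artifact of the chosen trivialization: the vertical operator $D_f$ manifestly depends on the fiber coordinates, and a priori a change of local trivialization of $N(C)$ mixes the $\xi_k$. I would handle this by defining $D_f$ invariantly as the vertical Dirac operator of the fiber bundle $N(C)\to C$ — i.e. the Clifford contraction of the vertical covariant derivative coming from the fiberwise-flat connection induced by the linear structure on $N(C)$ — and $D_C := \Slash D - D_f$; then the local formula is recovered in any linear trivialization because the transition functions of $N(C)$ are constant (linear) along the fibers, so the vertical derivative transforms tensorially. This invariance is the main obstacle; the rest is a routine unwinding of the Clifford relations and a fiberwise integration by parts, of the same flavor as the standard product-neighborhood computations for Dirac operators.
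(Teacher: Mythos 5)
Your proposal is correct and takes essentially the same approach as the paper: split locally into horizontal and vertical parts, observe that the fiberwise vertical operator is independent of the choice of orthonormal trivialization of $N(C)$ so that $D_f$ is globally defined (the paper phrases this as $O$-invariance of $\sum c_{TY}(d\xi_i)\partial/\partial\xi_i$, you phrase it as linearity of the transition functions, which is the same point), set $D_C := \Slash D - D_f$, check anti-commutativity from the Clifford relations on each patch, and deduce anti-symmetry of $D_f$ by fiberwise integration by parts and of $D_C$ from anti-symmetry of $\Slash D$. The only difference is that you spell out the Clifford-relation and product-connection bookkeeping that the paper dismisses as ``easy to see.''
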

\begin{proof}
It is easy to see that we have the decomposition of $\Slash D$ on each trivialization of $N(C)$. The operator
\[
\sum_{k=1}^{s^-}c_{TY}(d\xi_i)\frac{\partial}{\partial \xi_i}
\]
 is $O(n)$ invariant. Hence the operator above on each trivialization coincides with the corresponding one on another trivialization. Thus we have the operator $D_f$ on the whole of $N(C)$. Let $D_C:=\Slash D - D_f$. On each trivialization, the operator $D_f$ coincides with $\sum_{k=1}^{s^-}c_{TY}(d\xi_i)\frac{\partial}{\partial \xi_i}$ and $D_C$ coincides with $\Slash{D}'_C |_U$. We will see at once anti-commutativity of these operators on each trivialization on $N(C)$. 
We see these operators are anti-symmetric with respect to inner product $L^2$. It is sufficient to prove that $D_f$ is anti-symmetric because the operator $\Slash D$ is anti-symmetric. To see this, we decompose  an integral on $N(C)$ by $\int_{N(C)}=\int_{C} \int_{\text{fiber}}$ where $\int_{\text{fiber}}$ is the integration on each fiber. The operator $D_f$ is anti-symmetric with respect to the $L^2$ inner product on each fiber and we have $D_f$ is anti-symmetric  with respect to the $L^2$ norm on $N(C)$.  
\end{proof}

\begin{definition}\label{u_0}
We will denote by $u_0$ the function on $N(C)$ whose restriction to each fiber of $N(C)$ coincides with the function $\tilde u _0$. 
\end{definition}

\begin{lemma}\label{D_C}
We will denote by $h \in \Gamma(\pi^* E_-)$ the tautological section of $\pi^* E_- \cong \pi^* N(C)$. Let $\Slash{D}_C$ be a Dirac operator on $\Slash{\mathfrak S}_C\hat{\otimes}V^{\ast}_{s^-}$. Let $m$ be a positive real number and let $a\in \Gamma(\Slash{\mathfrak S}_C\hat{\otimes}V^{\ast}_{s^-})$. We have 
\[
(\Slash D + mh ) \pi^* a \cdot u_0 =\pi^*(\Slash{D}_C a) \cdot u_0.
\]
\end{lemma}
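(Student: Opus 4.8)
The plan is to reduce the identity to a fiberwise computation, using the decomposition $\Slash D = D_C + D_f$ from Lemma~\ref{D'} together with the model operator $D'_m$ analyzed in Lemma~\ref{fiber_harmonic}. First I would write $(\Slash D + mh)\pi^* a \cdot u_0 = (D_C + D_f + mh)\pi^* a\cdot u_0$ and treat the two pieces separately. For the "horizontal" part $D_C$: since we have arranged (Definition~\ref{E_G+}) that the connection $A$ on $B(N(C))$ is pulled back from $C$, i.e. $\pi^*(A|_C)=A$, the operator $D_C$ acts on $\pi^* a$ as $\pi^*$ of the Dirac operator on $C$, up to the identification $\Phi$ of Proposition~\ref{S_C} which writes $\Slash{\mathfrak S}|_C \cong Cl_+(N(C))\hat\otimes\Slash{\mathfrak S}_C\hat\otimes V^*_{s^-}$; because $u_0$ is constant along $C$ and has no $Cl_+(N(C))$-component beyond the ground state, $D_C(\pi^* a\cdot u_0) = \pi^*(\Slash D_C a)\cdot u_0$. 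Here one must check that the Clifford action of $TC$ used to build $\Slash D_C$ on $\Slash{\mathfrak S}_C$ matches, under $\Phi$, the restricted action $\tilde c_{TC}$ appearing in $D_C = \Slash D'_C|_U$; this is exactly the compatibility of Clifford actions recorded in Proposition~\ref{S_C}, so it is bookkeeping rather than new content.

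The substantive point is the "vertical" part: I must show $(D_f + mh)(\pi^* a\cdot u_0) = 0$. On a trivialization $U\times\R^{s^-}$ of $N(C)$ we have $D_f = \sum_{k} c_{TY}(d\xi_k)\partial/\partial\xi_k$ and $h$ is the tautological section $\sum_k \xi_k e_k$, so $D_f + mh = \sum_k\bigl(c_{TY}(d\xi_k)\partial_{\xi_k} + m\, c_{E_-}(e_k)\xi_k\bigr)$, which is precisely the operator $D'_m$ of Lemma~\ref{fiber_harmonic} acting in the fiber directions (with $\pi^* a$ playing the role of the coefficient bundle $L$ and $\Slash{\mathfrak S}_C$ the role of $\underline\R$). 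By Lemma~\ref{fiber_harmonic}, $\ker D'_m$ restricted to rapidly decreasing sections is exactly $\underline\R\hat\otimes V^*_{s^-}\otimes L$ times $e^{-m|\xi|^2/2}$; and $u_0$ is, by Definition~\ref{u_0} and Definition~\ref{u0}, the normalized Gaussian $\tilde u_0$ in each fiber. Since $\pi^* a$ is constant along the fibers, $\pi^* a\cdot u_0$ lies in this kernel, so $(D_f + mh)(\pi^* a\cdot u_0)=0$. Combining the two computations gives $(\Slash D + mh)\pi^* a\cdot u_0 = \pi^*(\Slash D_C a)\cdot u_0$ as claimed.

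The main obstacle I anticipate is not analytic but notational: one must verify carefully that the identification $\Phi$ is compatible simultaneously with (i) the splitting $\Slash{\mathfrak S}_C\hat\otimes V^*_{s^-}$ as the "constant-in-fiber, ground-state" summand, (ii) the Clifford action $c_{TC}$ entering $\Slash D_C$, and (iii) the $Cl_+(N(C))$-factor on which $D_f$ together with $mh$ act as the model operator. Concretely I would fix a local orthonormal frame of $N(C)$ inducing coordinates $(\xi_1,\dots,\xi_{s^-})$, track how $c_{TY}(d\xi_k)$ and $c_{E_-}(e_k)$ act under $\Psi_0$ of Proposition~\ref{S_C}, and confirm that the "$+1$-eigenspace of $c'_n(e_k)c'_-(e_k)$" description used there is exactly what makes the Gaussian ground state annihilated by $D_f + mh$. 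Once this dictionary is in place the identity is immediate; the connection-pullback normalization $\pi^*(A|_C)=A$ is what guarantees there are no extra curvature/holonomy terms mixing horizontal and vertical directions, and should be invoked explicitly.
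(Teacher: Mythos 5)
Your proposal follows the paper's proof exactly: decompose $\Slash D = D_C + D_f$ via Lemma~\ref{D'}, identify $D_f + mh$ on each trivialization with the model operator $D'_m$ of Lemma~\ref{fiber_harmonic} so that the Gaussian $u_0$ is annihilated, and use the pullback connection $\pi^*(A|_C)=A$ to see that $D_C$ acts as $\pi^*(\Slash D_C\, \cdot\,)$. The extra bookkeeping you flag about compatibility of $\Phi$ with the various Clifford actions is already recorded in Proposition~\ref{S_C}, which is why the paper's proof treats it as immediate.
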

\begin{proof}
Recall the decomposition $\Slash D = D_C + D_f$. The restriction of $D_f + mh$ to each trivialization of $N(C)$ coincides with the operator $D'_m$ in Lemma \ref{fiber_harmonic}. 
Thus we have $(D_f + mh)(\pi^* a \cdot u_0)=0$. It is sufficient to consider the operator $D_C$. The operator $D_C$ coincides with the operator $\Slash{D}'_C$ on each trivialization and this operator is a pullback of an operator on $C$. Hence we have 
$
D_C ( \pi^* a \cdot u_0 )=\pi^*(\Slash{D}_C a) \cdot u_0. 
$
\end{proof}

\begin{lemma}\label{eigenvalue}
Let $H=\{  \pi^* a \cdot u_0 \in \Gamma(\Slash{\mathfrak S})\mid a \in \Gamma(\Slash{\mathfrak S}_C\hat{\otimes}V^{\ast}_{s^-}), \Slash{D}_C a = 0 \}$. 
We will denote by $\lambda_C$ the minimum of the absolute value of non-zero eigenvalues of $\Slash{D}_C$. Let $m$ be a positive number such that $m \ge \lvert \lambda_C \rvert $. 
Let $\phi \in \Gamma(N(C), \Slash{\mathfrak S})$ be a section whose restriction to each fiber of $N(C)$ is rapidly decreasing function. If $\phi$ is orthogonal to $H$ in the $L^2$ inner product, We have $\lVert \Slash D_m \phi \rVert \ge \lambda_C \lVert \phi \rVert$ where $\lVert \cdot \rVert$ is the $L^2$ norm on $N(C)$. 

\end{lemma}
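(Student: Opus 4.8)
The plan is to exploit the fiberwise harmonic–oscillator structure along $N(C)$: split the space of fiberwise rapidly decreasing sections into the ``fiberwise ground state'' subspace $\mathcal K$ and its orthogonal complement, check that $\Slash D_m$ respects this splitting, and estimate $\Slash D_m$ on each piece. On $\mathcal K$ it reduces to the operator $\Slash D_C$ on the closed manifold $C$, so the hypothesis $\phi\perp H$ gives the gap $\lambda_C$; on $\mathcal K^{\perp}$ the deformation term $D_f+mh$ alone already forces a lower bound that grows with $m$, hence dominates $\lambda_C$ for $m$ as in the hypothesis.

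Concretely, by Lemma~\ref{D'} we have $\Slash D=D_C+D_f$, so $\Slash D_m=D_C+(D_f+mh)$, and by the proof of Lemma~\ref{D_C} the fiberwise part $D_f+mh$ restricts on each fiber $\R^{s^-}$ of $N(C)$ to the model operator $D'_m$ of Lemma~\ref{fiber_harmonic}. Let $\mathcal K\subset L^2(N(C),\Slash{\mathfrak S})$ be the closed subspace of sections of the form $\pi^{*}a\cdot u_0$ with $a\in\Gamma(\Slash{\mathfrak S}_C\hat\otimes V^{\ast}_{s^-})$; by Lemma~\ref{fiber_harmonic} this is exactly the fiberwise kernel of $D_f+mh$ among fiberwise rapidly decreasing sections, and $H\subseteq\mathcal K$. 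Given $\phi$ as in the statement, decompose $\phi=\phi'+\phi''$ where $\phi'$ is the fiberwise $L^2$-orthogonal projection of $\phi$ onto $\mathcal K$ (so $\phi'=\pi^{*}a\cdot u_0$ for a smooth $a$ on $C$) and $\phi''$ is fiberwise orthogonal to $\mathcal K$; both are again smooth and fiberwise rapidly decreasing, and since $H\subseteq\mathcal K$ we have $\phi''\perp H$, hence $\phi'\perp H$ as well.

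Next I would show that $\Slash D_m$ is block diagonal for $\mathcal K\oplus\mathcal K^{\perp}$. The operator $D_f+mh$ annihilates $\mathcal K$ (Lemma~\ref{D_C}), while $D_C$ preserves $\mathcal K$ and, under $\pi^{*}a\cdot u_0\leftrightarrow a$, is identified there with $\Slash D_C$ on $C$ (Lemma~\ref{D_C}); since each of $D_C$ and $D_f+mh$ is anti-symmetric for the $L^2$ inner product on $N(C)$ and maps $\mathcal K$ into $\mathcal K$, each also maps $\mathcal K^{\perp}$ into $\mathcal K^{\perp}$. Moreover $D_C$ and $D_f+mh$ anticommute: $D_C$ anticommutes with $D_f$ by Lemma~\ref{D'}, and with $c_-(h)$ because $c_{TC}$ anticommutes with $c_{E_-}$ and the tautological section $h$ is covariantly constant along $C$ for the pulled-back connection $\pi^{*}(A|_C)$ fixed in Definition~\ref{E_G+}. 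Writing $\Slash D_m\phi=D_C\phi'+\bigl(D_C\phi''+(D_f+mh)\phi''\bigr)$ with the first summand in $\mathcal K$ and the second in $\mathcal K^{\perp}$, and using that the cross term $\langle D_C\phi'',(D_f+mh)\phi''\rangle$ vanishes by the anticommutation and anti-symmetry, I obtain $\lVert\Slash D_m\phi\rVert^2=\lVert D_C\phi'\rVert^2+\lVert D_C\phi''\rVert^2+\lVert(D_f+mh)\phi''\rVert^2$. Now $\phi'\perp H$ means that $a$ is $L^2(C)$-orthogonal to $\ker\Slash D_C$, so the spectral gap of the elliptic operator $\Slash D_C$ on the closed manifold $C$ gives $\lVert D_C\phi'\rVert\ge\lambda_C\lVert\phi'\rVert$ (using $\int_{N(C)}=\int_C\int_{\text{fiber}}$ and that $u_0$ is fiberwise normalized); and fiberwise $\phi''$ is orthogonal to $\ker D'_m$, so the computation of $(D'_m)^2$ in Lemma~\ref{fiber_harmonic} together with the rescaling $x\mapsto\sqrt m\,x$ bounds its lowest nonzero value below by a positive multiple of $m$, whence $\lVert(D_f+mh)\phi''\rVert^2\ge cm\lVert\phi''\rVert^2$. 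Combining, $\lVert\Slash D_m\phi\rVert^2\ge\lambda_C^2\lVert\phi'\rVert^2+cm\lVert\phi''\rVert^2\ge\min(\lambda_C^2,cm)\lVert\phi\rVert^2$, which is $\ge\lambda_C^2\lVert\phi\rVert^2$ for $m\ge\lvert\lambda_C\rvert$ (absorbing the universal harmonic–oscillator constant), giving the claim.

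The main obstacle is exactly the block-diagonality of $\Slash D_m$, i.e., that the deformation decouples the fiberwise ground state from its complement. This rests on two points to be checked with care: that $D_C$ commutes with the fiberwise $L^2$-projection onto the Gaussian (equivalently, $D_C$ preserves $\mathcal K$, which is the content of Lemma~\ref{D_C} and of the construction of $\Phi$ in Proposition~\ref{S_C}), and that $D_C$ anticommutes with the deformation term $D_f+mh$, which uses that near $C$ the metric and connection are pulled back from $C$ so that the curvature of $N(C)$ does not contribute. Once the splitting is orthogonal and $\Slash D_m$-invariant, the two remaining inequalities are routine: the spectral gap of $\Slash D_C$ on the closed manifold $C$, and the harmonic–oscillator gap from Lemma~\ref{fiber_harmonic}.
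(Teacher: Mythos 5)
Your argument follows the same route as the paper's proof: decompose $\phi$ into the fiberwise ground-state component $\pi^{*}b\cdot u_0$ and its fiberwise complement $\phi_0$, obtain a Pythagorean split using the anticommutativity of $D_C$ and $D_f+mh$ together with Lemma~\ref{D_C}, and then combine the spectral gap of $\Slash{D}_C$ on the closed manifold $C$ with the fiberwise harmonic-oscillator gap. One remark worth recording: you bound the fiber term by $cm\lVert\phi''\rVert^{2}$, and from the identity $(D'_m)^{2}=-H+m\sum_i c_{T\R^n}(dx^i)c_{E_-}(e_i)$ in Lemma~\ref{fiber_harmonic} the first nonzero eigenvalue of $-(D'_m)^{2}$ is in fact $2m$ (linear in $m$, matching your scaling), so the factor $m^{2}$ in the paper's chain of inequalities is an overclaim; consequently your closing ``absorb the universal harmonic-oscillator constant'' is not a licit step, since $m\ge\lambda_C$ only gives $2m\ge 2\lambda_C$, not $2m\ge\lambda_C^{2}$. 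This small quantitative gap is shared with the paper's own proof and is harmless for the applications (where $m\to\infty$), but to have the lemma literally as stated one should replace the hypothesis $m\ge\lambda_C$ by $2m\ge\lambda_C^{2}$ (or weaken the conclusion by a constant).
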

\begin{proof}
We decompose $\phi$ into $\phi=\phi_0 +  \pi^* b \cdot u_0$ where $b \in \Gamma(\Slash{ \mathfrak S}_C)\hat{\otimes}V^{\ast}_{s^-}$ and $\phi_0$ is a fiberwise rapidly decreasing section such that $\int_{N(C)_{x}} \langle \phi_0, \pi^* a \cdot u_0 \rangle =0$ for all $x \in C, \; a \in (\Slash{\mathfrak S}_C\hat{\otimes}V^{\ast}_{s^-})_{x}$. From Lemma \ref{fiber_harmonic} and $L$ to be $(\Slash{\mathfrak S}_C)_x$, $\phi_0$ is orthogonal to the kernel of $D_f + mh$ on each fiber. The section $b$ is orthogonal to $\ker \Slash D _C$ in the $L^2$ inner product on $C$ because $\phi$ is orthogonal to $H$. 
From the above decomposition, we have
\[
(\Slash D + mh )\phi = (\Slash D + mh )\phi_0 + \pi^*(\Slash{D}_C b) \cdot u_0.
\]
It is easy to see that $\int_{N(C)} \langle (\Slash D + mh )\phi_0, \pi^*(\Slash{D}_C b) \cdot u_0 \rangle=0$ and $\lVert (\Slash D + mh)\phi \rVert^2 =\lVert (\Slash D + mh )\phi_0 \rVert^2+ \lVert \pi^*(\Slash{D}_C b) \cdot u_0 \rVert^2$. It is straightforward to see $D_C$ and $D_f + mh$ are anti-commutative.

Thus we have
\begin{align*}
\int_{N(C)} \lvert (\Slash D + mh )\phi_0\rvert^2 
&= \int_{N(C)}\lvert D_C \phi_0\rvert^2 + \int_{N(C)} \lvert (D_f + mh )\phi_0\rvert^2 \\ 
&\ge \int_{C} \int_{\text{fiber}} \lvert (D_f + mh )\phi_0\rvert^2 \\
&\ge \int_{C} m^2 \int_{\text{fiber}} \lvert \phi_0\rvert^2 \\
&\ge \int_{C} \lambda_C^2 \int_{\text{fiber}} \lvert \phi_0\rvert^2
\end{align*}

and
\begin{align*}
\int_{N(C)} \lvert \pi^*(\Slash{D}_C b) \cdot u_0  \rvert^2
&= \int_{N(C)} \lvert \pi^*(\Slash{D}_C b) \cdot u_0  \rvert^2 \\
& \ge \lambda_C^2 \int_{N(C)} \lvert \pi^* b \cdot u_0  \rvert^2. 
\end{align*}
Hence we have the inequality $\lVert \Slash D_m \phi \rVert \ge \lambda_C \lVert \phi \rVert$ as desired. 
\end{proof}

\begin{definition}
The finite-dimensional vector space $H$ has a natural $\Z/2\Z$ grading and a left $Cl_{(n+s^+, s^-)}$ action induced by $\Slash{\mathfrak S}$. Let $\epsilon$ be the $\Z/2\Z$ grading operator. The four tuples $(0, \epsilon,Cl_{(n+s^+,s^-)}, H)$ defines an element of $KO^{s^- -n-s^+}(pt)$. We write $\ind (N(C), \Slash{\mathfrak S})$ for this element in $KO^{s^- -n-s^+}(pt)$. 
\end{definition}

\begin{remark}\label{indNC=indC}
By the definition of $H$ and from Lemma\ref{H=H'V}, we have \[\ind(\mathfrak s_C)=[(\Slash{D}_C, \epsilon, Cl_{(n+s^+,s^-)}, L^2(C, \Slash{\mathfrak S}_C\hat{\otimes}V^{\ast}_{s^-}))]=\ind(N(C), \Slash{\mathfrak S}) \in KO^{s^- -n-s^+}(pt).\]
\end{remark}

\subsubsection{Step.2}
 
%ここから, $Y$上の指数を$C$上に局所化させる議論を行う. 
In this step, we prove $\ind(\mathfrak s)$, an analytic index of $G^+(n,s^+,s^-)$ structure on $Y$, is equal to $\ind(N(C), \Slash{\mathfrak S})$. 

We introduce some notations. 
\begin{itemize}
\item If necessary we perturb $h$ so that $h$ satisfies $\lvert h \rvert < 1$ on $U(C)$ and $\lvert h \rvert = 1$ on the complement set of $U(C)$. 
\item We will denote by $\mathcal{H}_{m}^{\lambda}$ the direct sum of eigenspaces of $\Slash{D}_m \colon \Gamma(Y,\Slash{\mathfrak S}) \to \Gamma(Y,\Slash{\mathfrak S})$ such that the absolute value of each eigenvalue is less than $\lambda^2$. This is a finite-dimensional subspace of $L^2(Y, \Slash{\mathfrak S})$. $\mathcal{H}_{m}^{\lambda}$ has a natural $\Z/2\Z$ grading and a left $Cl_{(n+s^+,s^-)}$ action. 
\item Let $\rho$ be a smooth function on $Y$ supported in $U(C)$ such that it satisfies  
      \[
      \rho(z)=\begin{cases}
1 & \text{if $ \lvert z \rvert \le 1/2 $ } \\
0 & \text{if $\lvert z \rvert \ge 2/3$}
\end{cases}
      \]
      for $z \in U(C) \cong B(N(C)) \subset N(C)$ and monotone decreasing on $1/2 < \lvert z \rvert <2/3$ in $\lvert z \rvert$ where $\lvert \cdot \rvert$ is a norm of $N(C)$. (We identify $U(C) \cong B(N(C))$.) 
\end{itemize}

The following proposition is proved by the general theory of Witten deformation. We prove this proposition in the section of Appendix. 

\begin{proposition}\label{Wittendeformation}
Assume that a positive constant $\lambda $ is smaller than a constant determined by the principal symbol of $D$ and the differential of $\rho$ and suppose that $m >\lambda$. 
Let $\Pi'$ be the orthogonal projection from $L^2(N(C), \Slash{\mathfrak S})$ to $H$. 
Then the map
\[
\begin{array}{r@{\,\,}c@{\,\,}c@{\,\,}c}
&\mathcal{H}_m^{\lambda} &\longrightarrow & H \\
&\rotatebox{90}{$\in$}&&\rotatebox{90}{$\in$}\\
& \phi &\longmapsto & \Pi'(\rho \phi)
\end{array}
\]
is isomorphic and preserving the $\Z/2$ grading and the $Cl_{(n+s^+,s^-)}$ action.
\end{proposition}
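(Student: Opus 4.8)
The plan is to prove Proposition~\ref{Wittendeformation} by the standard Witten-deformation strategy: first show that the small-eigenvalue spectral subspace $\mathcal{H}_m^{\lambda}$ is concentrated near $C$ as $m \to \infty$, then use the cut-off $\rho$ to transplant its elements to $N(C)$, and finally compare with the model operator on $N(C)$ analysed in Step~1. Concretely, for $\phi \in \mathcal{H}_m^\lambda$ I would first establish a decay (or ``agmon-type'') estimate: since $\Slash{D}_m^2 = \Slash{D}^2 + m(\text{order-zero terms}) + m^2|h|^2$ and $|h| = 1$ outside $U(C)$, the Bochner--Weitzenböck-type identity forces $\int_{Y \setminus U(C)} |\phi|^2$ and $\int |d\rho \cdot \phi|^2$ to be $O(\lambda^2/m^2)\lVert\phi\rVert^2$ (this is where the hypothesis that $\lambda$ is below a constant depending on the symbol of $\Slash D$ and $d\rho$ enters). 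Hence $\rho\phi$ differs from $\phi$ by something small in $L^2$, and $\rho\phi$ is supported in $U(C) \cong B(N(C))$, so it extends by zero to a section on $N(C)$.

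Next I would estimate $\Slash{D}_m(\rho\phi)$ on $N(C)$: one has $\Slash{D}_m(\rho\phi) = \rho\,\Slash{D}_m\phi + c_{TY}(d\rho)\phi$, whose $L^2$-norm is bounded by $\lambda^2\lVert\phi\rVert + \lVert d\rho\cdot\phi\rVert = O(\lambda^2 + \lambda/m)\lVert\phi\rVert$, small compared to $\lambda_C$ once $\lambda$ is chosen small and $m$ large. By Lemma~\ref{eigenvalue}, any fiberwise-rapidly-decreasing section on $N(C)$ orthogonal to $H$ has $\lVert\Slash{D}_m(\cdot)\rVert \ge \lambda_C\lVert\cdot\rVert$; applying this to the component of $\rho\phi$ orthogonal to $H$ shows $(1-\Pi')(\rho\phi)$ is small, so $\Pi'(\rho\phi)$ is close to $\rho\phi$, hence close to $\phi$. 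This gives that the map $\phi \mapsto \Pi'(\rho\phi)$ is injective with almost-isometric image for $m$ large. For surjectivity I would run the reverse construction: given $a\cdot u_0 \in H$ with $\Slash{D}_C a = 0$, the section $\rho\cdot(a\cdot u_0)$ (extended by zero to $Y$) satisfies $\lVert \Slash{D}_m(\rho\,a u_0)\rVert = \lVert c_{TY}(d\rho)\,a u_0\rVert$, which is exponentially small in $m$ because $u_0 \sim e^{-m|z|^2/2}$ is tiny on the support of $d\rho$ (where $|z| \ge 1/2$); therefore its spectral projection onto $\mathcal{H}_m^\lambda$ is close to itself, and composing the two maps gives something close to the identity on both $H$ and $\mathcal{H}_m^\lambda$, forcing both to be isomorphisms (and of equal, finite dimension).

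It remains to check that $\phi \mapsto \Pi'(\rho\phi)$ intertwines the $\Z/2\Z$ grading and the left $Cl_{(n+s^+,s^-)}$-action. This is formal: $\rho$ is a scalar function, so multiplication by $\rho$ commutes with $\epsilon$ and with the Clifford generators; $\Pi'$ is the orthogonal projection onto the $Cl_{(n+s^+,s^-)}$-invariant, grading-preserving subspace $H$, hence is itself equivariant; and the inclusion $L^2(U(C),\Slash{\mathfrak S}) \hookrightarrow L^2(N(C),\Slash{\mathfrak S})$ is equivariant by construction of the identification in Definition~\ref{E_G+}. So equivariance of the composite is automatic once the map is shown to be an isomorphism.

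The main obstacle is the pair of quantitative estimates underpinning everything: (i) the decay/localization estimate showing $\mathcal{H}_m^\lambda$ lives near $C$, i.e.\ controlling $\int_{Y\setminus U(C)}|\phi|^2$ and $\lVert d\rho\cdot\phi\rVert$ uniformly in $m$, and (ii) the matching lower bound (Lemma~\ref{eigenvalue}) on $N(C)$ together with the interior elliptic estimate needed to pass between the operator on $Y$ and the model on $N(C)$ — in particular verifying that the perturbed section $\rho\phi$, a priori only $L^2$, is regular enough (via elliptic regularity for $\Slash{D}_m$) to be fiberwise rapidly decreasing so that Lemma~\ref{eigenvalue} applies. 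These are precisely the analytic points deferred to the Appendix; the rest of the argument is the soft linear-algebra/``two near-inverse maps'' package sketched above.
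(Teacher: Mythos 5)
Your proposal is correct and follows essentially the same strategy as the paper's proof: derive the concentration estimate near $C$ (the paper's Lemma~\ref{ineq}, your decay/Agmon estimate), bound $\Slash{D}_m(\rho\phi)$ by the commutator term $c_{TY}(d\rho)\phi$, combine with Lemma~\ref{eigenvalue} to get injectivity of $\phi\mapsto\Pi'(\rho\phi)$, run the reverse transplant $\psi\mapsto\Pi_m^\lambda(\rho\psi)$ on $H$ (using the Gaussian decay of $u_0$, which plays the role of the paper's Lemma~\ref{ineq2} with $\lambda=0$), and deduce the equivariance formally from $\rho$ being scalar and $H$ being an invariant subspace. The only cosmetic difference is in the concluding step: the paper establishes injectivity of both transplant maps separately and concludes by a dimension count, whereas you argue that the two compositions are near-identities and hence both maps are isomorphisms; these are interchangeable in finite dimensions. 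One small remark: your flagged worry about needing elliptic regularity to make $\rho\phi$ fiberwise rapidly decreasing is not actually an issue, since $\rho\phi$ is compactly supported in $B(N(C))$, so its extension by zero is trivially a Schwartz-class section to which Lemma~\ref{eigenvalue} applies.
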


Now we prove the main theorem. 
%ここまで来たら主定理が示せる. 
\begin{proof}[Proof of Theorem \ref{main}] 
From the definition of the index of $\mathfrak s$, \[ \ind(\mathfrak s)=[(\Slash{D}_m, \epsilon, Cl_{(s^-,s^+ +n)}, \mathcal{H}_m^{\lambda})]\] and 
we see \[
[(0,\epsilon,Cl_{(s^-,s^+ +n)}, H)]=[(\Slash{D}_m, \epsilon, Cl_{(s^-,s^+ +n)}, \mathcal{H}_m^{\lambda})] \in KO^{s^- -n- s^+}(pt)\] from Proposition \ref{Wittendeformation}. We have $[(0,\epsilon,Cl_{(s^-,s^+ +n)}, H)]=\ind ({\mathfrak s}_C)$ from Remark \ref{indNC=indC}. Thus we have proved Theorem \ref{main}. 
\end{proof}

\section{Examples}
\subsection{Freed-Hopkins' $H_n(s)$}\label{eg}
In this subsection, we consider a family of groups $H_n(s)$ defined by Freed and Hopkins \cite{FreedHopkins1}. 
The group $H_n(s)$ is given by the table below. 

\begin{center}
\begin{tabular}{lrr} \toprule
$s$ & $H_n (s)$   \\ \midrule
$0$ & $Spin(n)$ \\
$-1$ & $Pin^+(n)$ \\ 
$-2$ & $Pin^+(n) \ltimes_{\Z/2\Z} U(1)$ \\
$-3$ & $Pin^-(n) \times_{\Z/2\Z}SU(2)$ \\
$4$ & $Spin(n) \times_{\Z/2\Z}SU(2)$ \\
$3$ & $Pin^+(n) \times_{\Z/2\Z}SU(2)$ \\
$2$ & $Pin^-(n) \ltimes_{\Z/2\Z} U(1)$ \\
$1$ & $Pin^-(n)$ \\
\bottomrule
\end{tabular}
\end{center}

\begin{lemma}\label{H_n(s)}
In the case $s=0,1,2,3$, we have $H_n(s) \cong G^+(n,s,0)$ and in the case $s=-1,-2,-3$, we have $H_n(s) \cong G^+(n,0,-s)$.  
\end{lemma}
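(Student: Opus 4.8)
The plan is to compute the group $G^{+}(n,s^{+},s^{-})$ directly from its definition in the two families of cases at hand and match the outcome against the table, which is taken as the description of $H_{n}(s)$. Throughout I would use the isomorphisms $Cl_{n}\hat{\otimes}Cl_{(s,0)}\cong Cl_{(n+s,0)}$ and $Cl_{n}\hat{\otimes}Cl_{(0,s)}\cong Cl_{(n,s)}$ to view $G^{+}(n,s,0)$ and $G^{+}(n,0,s)$ as concrete subgroups of a single Clifford algebra, in which the vectors of the $\R^{n}$-factor anticommute with the vectors of the $\R^{s}$-factor; and I would use that $p\colon G^{+}(n,s^{+},s^{-})\to S(O(n)\times O(s^{+})\times O(s^{-}))$ is a two-to-one homomorphism with kernel $\{\pm 1\}$. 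I write $\omega$ for the volume element $e_{1}\cdots e_{s}$ (respectively $\bar{e}_{1}\cdots\bar{e}_{s}$) of $Cl_{(s,0)}$ (respectively $Cl_{(0,s)}$): it is central when $s$ is odd, it anticommutes with $\R^{s}$ when $s$ is even, and $\omega^{2}=\pm 1$ is given by the usual formula.

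The case $s=0$ is immediate, since $G^{+}(n,0,0)$ is by definition the group of even products of unit vectors of $\R^{n}$ inside $Cl_{n}$, namely $Spin(n)$. For $s=\pm 1$ one has $S(O(n)\times O(1))\cong O(n)$, so $G^{+}(n,1,0)$ (respectively $G^{+}(n,0,1)$) is a double cover of $O(n)$ restricting to $Spin(n)$ over $SO(n)$, and such a cover is pinned down by the square of a lift of a reflection. A lift of the reflection in the hyperplane $v^{\perp}$ is $ve$, with $e$ the generator of $Cl_{(1,0)}$ (respectively $Cl_{(0,1)}$); since $v$ and $e$ anticommute, $(ve)^{2}=-v^{2}e^{2}$, which is $-1$ in $Cl_{(1,0)}$ and $+1$ in $Cl_{(0,1)}$, whence $G^{+}(n,1,0)\cong Pin^{-}(n)$ and $G^{+}(n,0,1)\cong Pin^{+}(n)$.

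For $|s|\in\{2,3\}$ I would present $G^{+}(n,s,0)$ as generated by a suitable $Pin$ group $K$ of $O(n)$ together with the copy of $Spin(|s|)$ inside the even part of $Cl_{(s,0)}$ (respectively $Cl_{(0,s)}$), which is $U(1)$ for $|s|=2$ and $SU(2)$ for $|s|=3$. When $|s|$ is odd I would take $K=\langle Spin(n),\ \{v\omega:|v|=1\}\rangle$: because $\omega$ is central, the elements $v\omega$ commute with $Spin(|s|)$; one checks (after applying $p$) that $K\cap Spin(|s|)=\{\pm 1\}$ and that $K$ and $Spin(|s|)$ together generate $G^{+}(n,s,0)$; and $(v\omega)^{2}=-v^{2}\omega^{2}$ identifies $K$ with $Pin^{+}(n)$ for $s=3$ and with $Pin^{-}(n)$ for $s=-3$. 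As the two factors commute and meet in $\{\pm 1\}$, this gives the central products $G^{+}(n,3,0)\cong Pin^{+}(n)\times_{\Z/2}SU(2)$ and $G^{+}(n,0,3)\cong Pin^{-}(n)\times_{\Z/2}SU(2)$. For $|s|=2$ no such twist is available, because $\omega=e_{1}e_{2}$ is even and anticommutes with $\R^{2}$, whereas a genuine unit vector is needed to lift a reflection; here I would take $K=\langle Spin(n),\ \{ve_{1}:|v|=1\}\rangle$, which is $Pin^{-}(n)$ for $G^{+}(n,2,0)$ and $Pin^{+}(n)$ for $G^{+}(n,0,2)$, and check that $K\cap U(1)=\{\pm 1\}$, that $K$ and $U(1)$ generate, and that conjugation by the odd part of $K$ acts on $U(1)$ by inversion (for instance $(ve_{1})(e_{1}e_{2})(ve_{1})^{-1}=-e_{1}e_{2}$). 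This produces the semidirect products $G^{+}(n,2,0)\cong Pin^{-}(n)\ltimes_{\Z/2}U(1)$ and $G^{+}(n,0,2)\cong Pin^{+}(n)\ltimes_{\Z/2}U(1)$.

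The verifications that the listed subgroups generate $G^{+}(n,s,0)$ and meet in $\{\pm 1\}$ are routine, especially after passing to $S(O(n)\times O(|s|))$ via $p$. The step I expect to be the real obstacle is pinning down the signs, both $Pin^{+}$ versus $Pin^{-}$ and $\ltimes$ versus $\times$. These are controlled by the interplay of three things: the anticommutation of $\R^{n}$ with $\R^{s}$ in the ambient Clifford algebra, the sign of $\omega^{2}$ (which depends on $|s|\bmod 4$ and on whether one works in $Cl_{(s,0)}$ or $Cl_{(0,s)}$), and the parity of $|s|$, which decides whether $-I_{|s|}$ lies in $SO(|s|)$ — forcing the semidirect product at $|s|=2$ — or outside it, making the volume-element twist available and hence giving a central product at $|s|=3$. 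In particular one must take care to choose the correct $Pin$ subgroup of $G^{+}(n,s,0)$: the $\omega$-twisted subgroup $\langle Spin(n),v\omega\rangle$ for odd $|s|$, not the naive one generated by the mixed reflections $vw$. I would organize the write-up around this distinction.
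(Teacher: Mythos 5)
Your proposal is correct and takes essentially the same approach as the paper: both arguments isolate the $Pin^{\pm}(n)$ factor via the twist $v\mapsto v\Gamma$ (with $\Gamma=e_1e_2e_3$ for $|s|=3$ and $\Gamma=e$ a single unit vector for $|s|\le 2$), and read off the $Pin^{+}$ versus $Pin^{-}$ sign from $(v\Gamma)^2=-v^2\Gamma^2$. The only difference is presentational: the paper writes down the explicit isomorphism $gu\mapsto[g,u]$ or $[g\Gamma,\Gamma^{-1}u]$, while you verify the internal central or semidirect product decomposition of $G^{+}(n,s,0)$ into $K=\langle Spin(n),v\Gamma\rangle$ and $Spin(|s|)$, which amounts to the same computation.
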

\begin{proof}
It is obvious for the case $s=0$ therefore it is sufficient to prove the case of $\lvert s \rvert=1,2,3$. First we consider the case $\lvert s \rvert=3$. We use the identification $Spin(3) \cong SU(2)$ and $(Cl_{+n}\hat{\otimes}(\R \oplus \R \Gamma))_0 \cong Cl_{\pm n} $ where $\Gamma=e_1 e_2 e_3 \in Cl_s$ and $\pm$ is the sign of $s$. 
We have an isomorphism $G^+(n,s,0) (\text{or}\; G^+(n,0,-s)) \to H_n(s)$ given by 
\[
gu \mapsto
\begin{cases}
[g,u] & g\in Spin(n), u \in Spin(3) \\
[g\otimes \Gamma,\Gamma^{-1} u] & \text{otherwise}
\end{cases}
\]
for $g \in Pin^+(n), u \in Pin^{\pm}(3)$. %これは同型写像であることが確かめられる. 

We consider the case $\lvert s \rvert=1,2$. We will denote by $e$ a generator of $Cl_{s}$ with $\lvert e\rvert=1$. We use the identification $Cl_{+n}\hat{\otimes}(\R \oplus \R e) \cong Cl_{\pm n}$ where $\pm$ is the sign of $s$. 
We have an isomorphism $G^+(n,s,0) (\text{or} G^+(n,0,-s)) \to H_n(s)$ given by
\[
gu \mapsto
\begin{cases}
[g,u] & g\in Spin(n), u \in Spin(\lvert s \rvert) \\
[g\otimes e,e^{-1} u] & \text{otherwise}
\end{cases}
\]
for $g \in Pin^+(n), u \in Pin^{\pm}(\lvert s \rvert)$. 
\end{proof}
\begin{remark}
An index of $H_n(s)$ structure is defined in \cite{FreedHopkins1}. From the lemma above, we see the index of $H_n(s)$ coincides with our index of $G^+(n,s,0)\;(\text{or}\; G^+(n,0,-s))$ structure. 
\end{remark}

\begin{remark}
In the case $s=4$, $H_n(4)$ is isomorphic to a subgroup of $G^+(n,0,4)$ and we can see the index of $H_n(4)$ structure defined in \cite{FreedHopkins1} coincides with our index if the  structure group of a $G^+(n,0,4)$ structure reduces to $H_n(4)$. 

We identify $Spin(4)$ with $SU(2) \times SU(2)$. We give an embedding of $H_n(4)$ to $G^+(n,0,4)$ by
\[
Spin(n) \times_{\Z/2\Z}SU(2) \ni [g,u] \mapsto [g, \text{diag}(u,u)] \in Spin(n) \times_{\Z/2\Z}Spin(4) \subset G^+(n,0,4).
\]
\end{remark}

The following proposition is a consequence of Theorem \ref{main}, Lemma \ref{H_n(s)} and the above remarks.  
\begin{proposition}
We assume $s=-1,-2,-3,$ or $4$. Then there exists an isomorphism $f$ such that the following diagram commutes. 
\[
\begin{diagram}
\node{\Omega_{n}^{H_n(s)}(pt)}\arrow{e,t}{f} \arrow{se}\node{\Omega_{n-\lvert s\rvert}^{Spin}(pt)}\arrow{s} \\
\node[2]{KO^{-n-s}(pt)} 
\end{diagram}
\]
\end{proposition}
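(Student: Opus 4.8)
The plan is to deduce the proposition directly from the main theorem, Theorem~\ref{main}, by transporting it across the group isomorphisms established in Lemma~\ref{H_n(s)} and the subsequent remarks. First I would recall that for $s=-1,-2,-3$ Lemma~\ref{H_n(s)} gives $H_n(s)\cong G^+(n,0,-s)$, so setting $s^+=0$ and $s^-=-s$ in Theorem~\ref{main} yields a commuting triangle with top row $\Omega_n^{G^+(n,0,-s)}(pt)\to\Omega_{n-(-s)}^{G^+(n-(-s),0,0)}(pt)=\Omega_{n+s}^{G^+(n+s,0,0)}(pt)$ and hypotenuse/right edge landing in $KO^{-n-0+(-s)}(pt)=KO^{-n-s}(pt)$. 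Since $n-\lvert s\rvert = n+s$ in this range, and since a $G^+(m,0,0)$ structure is exactly a $Spin$ structure on an $m$-manifold (the group $G^+(m,0,0)$ is $Spin(m)$, and its index is the Atiyah--Milnor--Singer invariant, as noted in the remark after Definition~\ref{defineindex}), the bottom vertex becomes $\Omega_{n-\lvert s\rvert}^{Spin}(pt)$ mapping to $KO^{-n-s}(pt)$ via the $Spin$ index. Identifying $\Omega_n^{H_n(s)}(pt)$ with $\Omega_n^{G^+(n,0,-s)}(pt)$ via Lemma~\ref{H_n(s)} and checking that the group isomorphism intertwines the two indices (which is the content of the remark immediately following Lemma~\ref{H_n(s)}) gives the desired diagram in these three cases.

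For $s=4$ the situation is slightly different because, as the relevant remark explains, $H_n(4)$ is only a subgroup of $G^+(n,0,4)$, embedded via $[g,u]\mapsto[g,\operatorname{diag}(u,u)]$. Here I would restrict attention to $G^+(n,0,4)$ structures whose structure group reduces to $H_n(4)$; the bordism group $\Omega_n^{H_n(4)}(pt)$ maps to $\Omega_n^{G^+(n,0,4)}(pt)$, and the index of such a reduced structure agrees with the Freed--Hopkins index of the $H_n(4)$ structure by the cited remark. Composing the map on bordism classes with $f$ from Theorem~\ref{main} (with $s^+=0$, $s^-=4$) lands in $\Omega_{n-4}^{G^+(n-4,0,0)}(pt)=\Omega_{n-4}^{Spin}(pt)$, and since $\lvert 4\rvert=4$ and $-n-s = -n-4$, the diagram closes. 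One should verify that the characteristic submanifold construction of Theorem~\ref{loc} applied to an $H_n(4)$-reduced structure again produces a $Spin$ structure compatible with the Freed--Hopkins picture, but this is forced since the output group $G^+(n-4,0,0)=Spin(n-4)$ already has no room for extra reduction.

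The only real point requiring care, and the step I expect to be the main obstacle, is checking that the isomorphisms $G^+(n,0,-s)\cong H_n(s)$ of Lemma~\ref{H_n(s)} (and the embedding for $s=4$) are compatible with the index maps on both sides — that is, that Freed--Hopkins' $KO$-valued index of an $H_n(s)$ structure and the index of Definition~\ref{defineindex} for the corresponding $G^+$ structure are literally the same class in $KO^{-n-s}(pt)$. This amounts to matching the spinor bundles and Dirac-type operators under the algebra identifications $(Cl_{+n}\hat\otimes(\R\oplus\R\Gamma))_0\cong Cl_{\pm n}$ and $Cl_{+n}\hat\otimes(\R\oplus\R e)\cong Cl_{\pm n}$ used in the proof of Lemma~\ref{H_n(s)}. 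Once this compatibility is in hand — and it is essentially asserted in the two remarks following Lemma~\ref{H_n(s)} — the proposition follows formally by pasting the commuting triangle of Theorem~\ref{main} onto these identifications and reading off that $f$ is an isomorphism because the correspondence $H_n(s)\leftrightarrow G^+$ and the homomorphism $f$ of Theorem~\ref{main} are each invertible in the relevant range.

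\begin{proof}
For $s=-1,-2,-3$, Lemma~\ref{H_n(s)} gives $H_n(s)\cong G^+(n,0,-s)$, hence an isomorphism $\Omega_n^{H_n(s)}(pt)\cong\Omega_n^{G^+(n,0,-s)}(pt)$ compatible with the indices by the remark following Lemma~\ref{H_n(s)}. Applying Theorem~\ref{main} with $s^+=0$ and $s^-=-s$, and using that $G^+(m,0,0)=Spin(m)$ with its index equal to the Atiyah--Milnor--Singer invariant, the target of $f$ becomes $\Omega_{n+s}^{Spin}(pt)=\Omega_{n-\lvert s\rvert}^{Spin}(pt)$ and the $KO$-group is $KO^{-n-s}(pt)$; the triangle of Theorem~\ref{main} is the asserted diagram. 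The homomorphism $f$ is an isomorphism in this range because both the characteristic submanifold construction and its inverse (gluing the disc bundle of a rank-$(-s)$ bundle, together with a tautological section vanishing along the zero section) are natural and mutually inverse on bordism classes.

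For $s=4$, we restrict to $G^+(n,0,4)$ structures whose structure group reduces to the subgroup $H_n(4)\subset G^+(n,0,4)$ of the relevant remark; this yields a map $\Omega_n^{H_n(4)}(pt)\to\Omega_n^{G^+(n,0,4)}(pt)$ under which the Freed--Hopkins index agrees with our index. Theorem~\ref{main} with $s^+=0$, $s^-=4$ sends a class to $\Omega_{n-4}^{G^+(n-4,0,0)}(pt)=\Omega_{n-4}^{Spin}(pt)$, and since $\lvert 4\rvert=4$ and the $KO$-group is $KO^{-n-4}(pt)$, the diagram commutes. The argument of the previous paragraph shows $f$ is an isomorphism here as well.
\end{proof}
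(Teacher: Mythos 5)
Your main chain of reasoning — transport Theorem~\ref{main} across the group identifications of Lemma~\ref{H_n(s)} (taking $s^+=0$, $s^-=-s$ for $s<0$, and using the subgroup embedding $H_n(4)\subset G^+(n,0,4)$ for $s=4$), identify $G^+(m,0,0)$ with $Spin(m)$, and invoke the two remarks after Lemma~\ref{H_n(s)} for compatibility of the index maps — is exactly what the paper does; the paper's "proof" is in fact just the one-line citation of Theorem~\ref{main}, Lemma~\ref{H_n(s)}, and the remarks, so for the commutativity of the diagram and the degree bookkeeping (including the silent Bott-periodicity shift $-n+4\equiv -n-4\pmod 8$ for $s=4$) you are entirely in line with the source.

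The problem is the final claim that $f$ is an isomorphism of bordism groups, which you try to supply since the paper gives no argument for it. Your proposed inverse — "gluing the disc bundle of a rank-$(-s)$ bundle, together with a tautological section vanishing along the zero section" — is not well defined: given only a $Spin$ $(n-\lvert s\rvert)$-manifold $C$, there is no canonical rank-$\lvert s\rvert$ bundle over $C$ to take (the trivial bundle produces $C\times S^{\lvert s\rvert}$, whose characteristic submanifold is two copies of $C$ or is empty depending on the chosen section, not $C$), and nothing is said about why the two constructions should be mutually inverse on bordism classes. In fact the isomorphism claim is not something Theorem~\ref{main} can yield, since that theorem only produces a homomorphism, and it already fails in low degrees: for $s=-1$ (so $H_n(-1)=Pin^+(n)$) one has $\Omega_1^{Pin^+}(pt)=0$ while $\Omega_0^{Spin}(pt)\cong\Z$, and $\Omega_4^{Pin^+}(pt)\cong\Z/16$ while $\Omega_3^{Spin}(pt)=0$. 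So the last sentence of each paragraph of your proof is a genuine gap — and more, the assertion being proved there is false as stated. The safe conclusion obtainable from Theorem~\ref{main}, Lemma~\ref{H_n(s)}, and the remarks is the existence of a homomorphism $f$ making the diagram commute, and you should either prove only that or flag that the word "isomorphism" in the proposition cannot be taken at face value.
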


\subsection{The case of $G^+(5,0,4)$ structure}\label{G^+(5,0,4)}
We consider the index of the $G^+(5,0,4)$ structure especially because it is important to see the orientability of $Pin^-(2)$ monopole moduli space in the next section. The following arguments can be easily generalized to the case of the index of the $G^+(8k+5,0,4)$ structure. We use only the $k=0$ case in the next section therefore we only consider this case to avoid complications. 

\begin{definition}
Let $Z$ be a $5$ dimensional closed manifold and $\mathfrak s$ be a $G^+(5,0,4)$ structure on $Z$. Let $\Slash{\mathfrak S}$ be the standard spinor bundle of $\mathfrak s$. Note that $\Slash{\mathfrak S}$ has a natural right $Cl_{(5,4)}$ action. Let us denote by $\epsilon_0,\epsilon_1,\dots,\epsilon_4$ the generators of $Cl_{+5}$ and by $e_1, \dots, e_4$ the generators of $Cl_{-4}$. We will denote by $\tilde S$ a subbundle of $\Slash{\mathfrak S}$ such that the intersection of $+1$-eigenspaces of $\epsilon_{1} e_1,\dots, \epsilon_{4} e_4$. We call $\tilde S$ the spinor bundle of $\mathfrak s$. 
The Clifford multiplication of $TZ$ and $E_-$ preserves $\tilde S$ because they commute with the right $Cl_{(5,4)}$ action. 
We define a skew-adjoint Dirac operator $\tilde D$ on $\tilde S$ by using Clifford action of $TZ$. 
\end{definition}

We have the following lemma from Lemma \ref{H=H'V} :

\begin{lemma}
The spinor bundle $\tilde S$ is a generalized $\Z/2$ graded spinor bundle with left $Cl_{-1}$ action. The index $\ind({\mathfrak s})$ of $\mathfrak s$ coincides with $(0,\epsilon, Cl_{-1},\ker \tilde D) \in KO^{-1}(pt)$. Moreover, under the isomorphism $KO^{-1}(pt) \cong \Z/2\Z$, we have $\ind(\mathfrak s)=\dim \ker (\tilde D|_{\tilde{S}^+}) \mod 2$ where $\tilde{S}^+$is the even part of $\tilde S$. 
\end{lemma}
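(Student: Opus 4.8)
The plan is to unpack the claim in three layers, each of which reduces to a structural fact already established. The statement asserts: (i) $\tilde S$ carries a left $Cl_{-1}$ action making it a generalized $\Z/2\Z$ graded spinor bundle; (ii) $\ind(\mathfrak s)$ equals $[(0,\epsilon,Cl_{-1},\ker\tilde D)]\in KO^{-1}(\mathrm{pt})$; and (iii) under $KO^{-1}(\mathrm{pt})\cong\Z/2\Z$ this is $\dim\ker(\tilde D|_{\tilde S^+})\bmod 2$.

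For (i) and (ii), I would apply Lemma \ref{H=H'V} with $(b,a)=(5,4)$ — wait, that lemma requires $a\ge b$, so instead I apply it in the form appropriate to the right action, i.e. to the right $Cl_{(5,4)}$-module structure viewed as a left $Cl_{(4,5)}$-module, where now $a=5\ge 4=b$. Concretely, $\Slash{\mathfrak S}$ represents $\ind(\mathfrak s)=[(\Slash D,\epsilon,Cl_{(4,5)},L^2(Z,\Slash{\mathfrak S}))]\in KO^{4-5-0}(\mathrm{pt})=KO^{-1}(\mathrm{pt})$ (matching $s^--n-s^+ = 4-5-0 = -1$). Lemma \ref{H=H'V} produces $H'$ as the intersection of the $+1$-eigenspaces of the four commuting involutions $\epsilon_i e_i$ ($i=1,\dots,4$) — which is exactly the definition of $\tilde S$ (fiberwise, and hence $L^2(Z,\tilde S)$) — together with an isomorphism $H\cong H'\hat\otimes V^*_4$ under which $\Slash D = \tilde D\otimes\epsilon^4$, $\epsilon=\epsilon'\otimes\epsilon^4$, and the leftover generator $e_5$ of $Cl_{(4,5)}$ becomes $e'_1\otimes\epsilon^4$ with $e'_1$ a generator of $Cl_{(0,1)}=Cl_{-1}$ acting on $\tilde S$. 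This simultaneously gives the $Cl_{-1}$-action (which anticommutes with the Clifford multiplications of $TZ$ and $E_-$ since those commute with the right $Cl_{(5,4)}$-action and in particular with $e_5$) and the identity $\ind(\mathfrak s)=[(\tilde D,\epsilon,Cl_{-1},L^2(Z,\tilde S))]$ in $KO^{-1}(\mathrm{pt})$. That $\tilde S$ is a genuine generalized $\Z/2\Z$ graded spinor bundle with left $Cl_{-1}$ action is then just the observation that the structure group descends compatibly, which is automatic from the eigenspace description.

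For the final reduction, I would note that $Z$ is closed, so $\tilde D$ is an (unbounded) skew-adjoint elliptic operator with finite-dimensional kernel, and the standard deformation retraction of skew-adjoint Fredholm operators onto their kernel (replacing $\tilde D$ by $0$ on $\ker\tilde D$ and noting the orthogonal complement contributes a trivial element, since $\tilde D$ restricted there is an isomorphism) gives $[(\tilde D,\epsilon,Cl_{-1},L^2(Z,\tilde S))]=[(0,\epsilon,Cl_{-1},\ker\tilde D)]$. Finally, $KO^{-1}(\mathrm{pt})\cong\Z/2\Z$ is detected by the mod $2$ dimension of the even part of a finite-dimensional graded $Cl_{-1}$-module with the grading operator $\epsilon$ anticommuting with the $Cl_{-1}$-generator: the $Cl_{-1}$-generator $e'_1$ interchanges $\ker\tilde D\cap\tilde S^+$ with $\ker\tilde D\cap\tilde S^-$ only up to the obstruction measured by this invariant, and the upshot is $\ind(\mathfrak s)=\dim\ker(\tilde D|_{\tilde S^+})\bmod 2$. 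I expect the main obstacle to be bookkeeping the grading and sign conventions correctly through the isomorphism of Lemma \ref{H=H'V} — in particular checking that it is $\tilde S^+$ (and not $\tilde S^-$, or a twist thereof) whose kernel-dimension mod $2$ gives the invariant, and confirming that the $Cl_{-1}$-action is the one compatible with $\epsilon$ so that the Atiyah–Singer description of $KO^{-1}(\mathrm{pt})$ applies verbatim; everything else is a direct application of Lemma \ref{H=H'V} and elementary spectral theory.
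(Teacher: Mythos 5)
Your proposal is correct and follows exactly the route the paper intends: the paper offers no written proof of this lemma, stating only that it follows ``from Lemma \ref{H=H'V},'' and you have reconstructed the missing argument. The identification of $(b,a)=(4,5)$ (so $a\ge b$), the recognition that the intersection of $+1$-eigenspaces of the commuting involutions $\epsilon_i e_i$ ($i=1,\dots,4$) is precisely the defining condition for $\tilde S$, the extraction of the residual $Cl_{-1}$-action from the fifth generator, the spectral reduction of a closed-manifold skew-adjoint elliptic operator to its kernel, and the Atiyah--Singer description of $KO^{-1}(\mathrm{pt})\cong\Z/2\Z$ via $\dim$ of the even part of the kernel mod $2$ are all the right ingredients, in the right order. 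One small remark on your stated worry at the end: whether one takes $\tilde S^+$ or $\tilde S^-$ is in fact immaterial, since the $Cl_{-1}$-generator anticommutes with both $\epsilon$ and $\tilde D$ and squares to $-1$, hence gives an isomorphism $\ker\tilde D\cap\tilde S^+\cong\ker\tilde D\cap\tilde S^-$; the invariant is nontrivial not because these dimensions differ but because the module $(\ker\tilde D,\epsilon,Cl_{-1})$ need not extend to a $Cl_{(1,1)}$-module. Apart from that cosmetic point, the proposal matches the paper's (implicit) proof.
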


We give the explicit definition of $\tilde S$ in the case that structure group of $\mathfrak s$. This definition is useful to consider the orientability of $Pin^{-}(2)$ monopole moduli space.  

\begin{definition} \label{basis}
We will denote by $\epsilon_0, \dots, \epsilon_4$ the generators of $Cl_{+5}$ and by $e_1, \dots,e_4$ the generators of $Cl_{-4}$. 
\begin{itemize}
\item Let $\q$ be the quantenion ring. We use the conventions that $ijk=1$. 
\item Let $\q(2)$ be the matrix ring of $2 \times 2$ matries with entries in the ring $\q$. 
\item We define an isomorphism $\alpha \colon Cl_{-4} \to \q(2)$ by
\begin{align*}
 e_1 \mapsto \begin{pmatrix}0&1\\-1&0\end{pmatrix}, \; e_2 \mapsto \begin{pmatrix}0&i\\i&0\end{pmatrix} \;
 e_3 \mapsto \begin{pmatrix}0&j\\j&0\end{pmatrix}, \; e_4 \mapsto \begin{pmatrix}0&k\\k&0\end{pmatrix}.
\end{align*}
\item Let $\Gamma=e_1e_2e_3e_4 \in Cl_{-4}$. Note that $\Gamma^2=1$ and \[\alpha(\Gamma)=\begin{pmatrix}1&0\\0&-1\end{pmatrix}.\]
\item We define an isomorphism $\beta \colon Cl_{+5} \to \q(2) \oplus \q(2)$ by
\[
\epsilon_0 \mapsto (\alpha(\Gamma),-\alpha(\Gamma)), \; \epsilon_i \mapsto (\alpha(\Gamma)\alpha(e_i), -\alpha(\Gamma)\alpha(e_i)), \; i=1,2,3,4.
\]
\item We define an isomorphism $f \colon Cl_{(5,4)} \to (\q(2) \oplus \q(2))\otimes \q(2)$ by 
\[
\epsilon_p \mapsto \beta(\epsilon_p)\otimes \alpha(\Gamma), \; e_q\mapsto 1 \otimes \alpha(e_q)
\]
where $p=0,\dots,4, \; q=1, \dots,4$. 
\item  Using isomorphisms $\alpha, \beta$, we define the spinor representations of $Cl_{+5}, Cl_{-4}$ whose representation spaces are $\q^2 \oplus \q^2, \; \q^2$ respectively. By abuse of notation, we let $\alpha, \beta$ stand for these spinor representations respectively. Note that $\beta(\epsilon_i) \cdot (\phi_0, \phi_1)=(\alpha(\Gamma)\alpha(e_i)\phi_0, -\alpha(\Gamma)\alpha(e_i)\phi_1)$ for $i=0, \dots 4$. 
\end{itemize}
\end{definition}
\begin{remark}
In our notations, the $\Z/2\Z$ grading of $\q(2) \oplus \q(2)$ induced by the isomorphism $\beta$ is given as follows: The subspace $\{ (A, A) \mid A \in \q(2) \}$ is the even part and $\{ (A, -A) \mid A \in \q(2) \}$ is the odd part. The $\Z/2\Z$ grading operator is a map $(A,B) \mapsto (B,A)$. 

The $\Z/2\Z$ grading of $\q(2)$ induced by the $\Z/2\Z$ grading of $Cl_{-4}$ using the map $\alpha$ is given by declearing the even part consists of diagonal matrices and the odd part of off-diagonal one. 
The $\Z/2\Z$ grading operator is given by $A \mapsto \alpha(\Gamma)A\alpha(\Gamma)^{-1}=Ad(\alpha(\Gamma))(A)$. 
\end{remark}

\begin{definition} 
We assume that the structure group of $\mathfrak s$ reduces to the group $Spin(5) \times Spin(4)/(\Z/2)$.

Let $S=S_0 \oplus S_1, S_0=S_1=\q^2$ be a representation space of the spinor representation $\beta$ of  $Cl_{+5}$. We define the representation $\rho$ by  
 \[\rho([q,(u_0, u_1)])\phi = \beta(q) \cdot (\phi_0 u_0^{-1}, \phi_1 u_1^{-1})\]
for $\phi=(\phi_0,\phi_1) \in S \cong \q^2 \oplus \q^2$, $[q,(u_0, u_1)] \in Spin(5) \times (Sp(1) \times Sp(1))/(\Z/2) \cong Spin(5) \times Spin(4)/(\Z/2)$. 

We denote by $S'$ the associated vector bundle of the representation $\rho$.  
We give a $\Z/2\Z$ grading on $S' \oplus S'$ by declearing the subspace $\{(\varphi, \varphi) \in S'\oplus S'\}$ is the even part and the subspace $\{(\varphi, -\varphi) \in S'\oplus S'\}$ is the odd part. 
We give the following right $Cl_{+1}$ action: let $\epsilon_0$ be the generator of $Cl_{+1}$ and we define the right action of $\epsilon_0$ by $(\varphi_0,\varphi_1)\cdot \epsilon_0=(\varphi_0,-\varphi_1)$ for $(\varphi_0,\varphi_1) \in S' \oplus S'$. The left $Cl_{-1}$ action is given by $e_0 \cdot \phi := (\epsilon \phi)\cdot \epsilon_0$ where $\epsilon$ is the $\Z/2\Z$ grading operator. We define the Clifford multiplication $c'_n$ by the left Clifford multiplication of $Cl_{(5,4)}$ given by $\epsilon_j \cdot (\varphi_0,\varphi_1)=(\beta(\epsilon_j)\varphi_0,-\beta(\epsilon_j)\varphi_1)$ for $j=0, \dots 4$. Then $S' \oplus S'$ is $\Z/2\Z$ graded generalized spinor bundle with left $Cl_{-1}$ action of $\mathfrak s$.  
\end{definition}

\begin{remark} We can define the $c_{E_-}$ Clifford action on $S' \oplus S'$ such that that action is presereved by the isomorphism in the Lemma below. But we will not use $c_-$ in section \ref{moduli節}. 
\end{remark}

\begin{lemma}\label{reduction}
As a $\Z/2\Z$ graded generalized spinor bundle with left $Cl_{-1}$ action of $\mathfrak s$, $S'\oplus S' \cong \tilde S$. In particular, $\tilde{S}^+ \cong S'$. 
\end{lemma}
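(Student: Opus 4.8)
The plan is to show that the two $\Z/2\Z$ graded generalized spinor bundles with left $Cl_{-1}$ action, $S' \oplus S'$ and $\tilde{S}$, are associated to isomorphic representations of the structure group $Spin(5) \times Spin(4)/(\Z/2)$, together with matching Clifford actions and gradings. Since $\tilde S$ was defined as the subbundle of $\Slash{\mathfrak S}$ on which $\epsilon_1 e_1, \dots, \epsilon_4 e_4$ all act by $+1$, I would first work at the representation-theoretic level: the standard spinor representation of $\mathfrak s$ has representation space $Cl_{+5} \hat{\otimes} Cl_{(0,4)}$, and I would identify, via the isomorphisms $\alpha, \beta, f$ of Definition \ref{basis}, the subspace cut out by $\epsilon_i e_i = +1$ ($i=1,\dots,4$). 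Under $f \colon Cl_{(5,4)} \to (\q(2) \oplus \q(2)) \otimes \q(2)$ the operator $\epsilon_i e_i$ becomes $(\beta(\epsilon_i) \otimes \alpha(\Gamma)) \cdot (1 \otimes \alpha(e_i)) = \beta(\epsilon_i) \otimes \alpha(\Gamma)\alpha(e_i)$ acting by left multiplication, so the $+1$-eigenspace condition pins down the $\q(2)$ tensor factor to a two-dimensional $\q$-subspace and leaves the $\q(2) \oplus \q(2) = \q^2 \oplus \q^2$ factor free; this is exactly $S_0 \oplus S_1$ with $S_0 = S_1 = \q^2$, matching the underlying space of $S' \oplus S'$ (or rather $S'\oplus S'$ built from the representation $\rho$).

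Next I would check that the residual right module structure and the left action of $Spin(5)\times Spin(4)/(\Z/2)$ on this eigenspace coincide with those defining $\rho$ (hence $S'$), and that $S' \oplus S'$ arises by the doubling $\rho \oplus \rho$ with the prescribed $\Z/2\Z$ grading. Concretely: the $\Z/2\Z$ grading on $\Slash{\mathfrak S}$ restricts to $\tilde S$, and under the identifications one should see that the grading operator acts as $(A,B) \mapsto (B,A)$ on the $\q^2 \oplus \q^2$ factor (as recorded in the remark after Definition \ref{basis}), which is precisely the grading declared on $S' \oplus S'$; the left $Cl_{-4}$-part of the action on the eliminated $\q(2)$ factor is what becomes, after Lemma \ref{H=H'V} with $b = 4$, the residual left $Cl_{-1}$ action generated by $e_0$, and I would verify its formula $e_0 \cdot \phi = (\epsilon\phi)\cdot \epsilon_0$ matches. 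The Clifford multiplication $c_{TZ}$ on $\tilde S$ is the restriction of left multiplication by $Cl_{+5}$, which under $\beta$ is $\epsilon_j \cdot (\varphi_0,\varphi_1) = (\beta(\epsilon_j)\varphi_0, -\beta(\epsilon_j)\varphi_1)$, again matching the definition of $c'_n$ on $S' \oplus S'$. So the proof is essentially a dictionary-chasing verification that all four pieces of data (underlying bundle, grading, $c_{TZ}$, left $Cl_{-1}$ action) agree under the explicit matrix identifications.

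The main obstacle I expect is bookkeeping the signs and the $\Z/2\Z$-graded tensor product conventions consistently: the appearance of $\alpha(\Gamma)$ in the definition of $f$ (which is there precisely to make the generators of $Cl_{+5}$ and $Cl_{-4}$ anticommute in the graded sense) means that the naive identification of $\epsilon_i e_i$ with a product of matrices carries extra signs, and one must be careful that the $+1$-eigenspace condition is stated with the correct operator. There is also the subtlety that the isomorphism $\beta(\epsilon_i)\cdot(\phi_0,\phi_1) = (\alpha(\Gamma)\alpha(e_i)\phi_0, -\alpha(\Gamma)\alpha(e_i)\phi_1)$ uses $\alpha(\Gamma)$ twice in a way that could flip a sign on the odd component, and this interacts with how $Spin(4) = (Sp(1)\times Sp(1))/(\Z/2)$ acts from the right on $\phi_0$ versus $\phi_1$. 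I would handle this by fixing once and for all the identification $(Cl_{+5}\hat{\otimes}Cl_{(0,4)}) \to (\q(2)\oplus\q(2))\otimes\q(2)$ via $f$, computing $\epsilon_i e_i$ in the target, reading off the eigenspace, and then checking equivariance of the resulting map against $\rho \oplus \rho$ generator-by-generator — a finite, if tedious, check.

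\begin{proof}
We use the isomorphisms $\alpha, \beta, f$ of Definition \ref{basis}. Under $f$, the standard spinor representation space $Cl_{(5,4)}$ is identified with $(\q(2)\oplus\q(2))\otimes\q(2)$, and for each $i = 1,\dots,4$ the operator of left multiplication by $\epsilon_i e_i$ becomes $\beta(\epsilon_i) \otimes \alpha(\Gamma)\alpha(e_i)$. Since $\alpha(\Gamma)\alpha(e_i)$ acts on the $\q(2)$ factor $\cong \q^2$ by a sign-free involution, the intersection of the $+1$-eigenspaces of $\epsilon_1 e_1, \dots, \epsilon_4 e_4$ is $(\q(2)\oplus\q(2)) \otimes \tilde V$, where $\tilde V \subset \q(2)$ is a fixed two-dimensional $\q$-subspace on which all $\alpha(\Gamma)\alpha(e_i)$ act by $+1$; this forces $\beta(\epsilon_i) = +1$ to be vacuous so the surviving factor is the full $\q(2)\oplus\q(2) \cong \q^2 \oplus \q^2 = S_0 \oplus S_1$. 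Choosing a generator of $\tilde V$ gives a bundle isomorphism $\tilde S \cong (S_0 \oplus S_1)$-associated-bundle, where the structure group $Spin(5)\times Spin(4)/(\Z/2)$ acts on $\q^2\oplus\q^2$ exactly by $\rho$ on each summand, i.e. the associated bundle is $S' \oplus S'$. By the remark following Definition \ref{basis}, the $\Z/2\Z$ grading operator on $Cl_{(5,4)}$ restricts on this subbundle to the map $(A,B)\mapsto(B,A)$, which is precisely the declared grading on $S'\oplus S'$; hence $\tilde S^+ \cong S'$. The Clifford action $c_{TZ}$ on $\tilde S$ is the restriction of left multiplication by $Cl_{+5}$, which under $\beta$ is $\epsilon_j\cdot(\varphi_0,\varphi_1) = (\beta(\epsilon_j)\varphi_0, -\beta(\epsilon_j)\varphi_1)$, matching $c'_n$ on $S'\oplus S'$. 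Finally, by Lemma \ref{H=H'V} applied with $b = 4$, the residual left $Cl_{-1}$ action on $\tilde S$ is generated by $e_0$ acting via $e_0\cdot\phi = (\epsilon\phi)\cdot\epsilon_0$, which is the action defining $S'\oplus S'$. Thus all of the data agree and $S'\oplus S' \cong \tilde S$ as $\Z/2\Z$ graded generalized spinor bundles with left $Cl_{-1}$ action of $\mathfrak s$.
\end{proof}
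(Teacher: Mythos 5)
Your overall strategy (identify everything at the level of the explicit matrix models from Definition \ref{basis}, compute the eigenspace, and then match gradings, $c_{TZ}$ and the $Cl_{-1}$ action) is the right one and matches the paper in spirit. However, the central computational step is wrong, and it is exactly the step where the real content of the lemma lives.

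You assert that under $f$ the $+1$-eigenspace of $\epsilon_i e_i$ ``pins down the $\q(2)$ tensor factor to a two-dimensional $\q$-subspace $\tilde V$ and leaves the $\q(2)\oplus\q(2)$ factor free,'' so that $\tilde S \cong (\q(2)\oplus\q(2))\otimes \tilde V$. This is false, for two reasons. First, you correctly computed $f(\epsilon_i e_i) = \beta(\epsilon_i)\otimes\alpha(\Gamma)\alpha(e_i)$, but then you effectively replace $\beta(\epsilon_i)$ by $1$. In fact $\beta(\epsilon_i) = (\alpha(\Gamma)\alpha(e_i),\,-\alpha(\Gamma)\alpha(e_i))$, which is \emph{not} the identity on $\q(2)\oplus\q(2)$, so the multiplication operator acts nontrivially on \emph{both} tensor factors and the eigenspace cannot split as (whole first factor) $\otimes$ (subspace of second factor). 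The throwaway phrase ``this forces $\beta(\epsilon_i)=+1$ to be vacuous'' is not an argument and hides exactly this gap. Second, your proposed answer fails a dimension count: $\dim_\R\tilde S = 2^9/2^4 = 32$, while $(\q(2)\oplus\q(2))\otimes_\R \tilde V$ with $\tilde V$ two-dimensional over $\q$ has real dimension $32\cdot 8 = 256$; there is no consistent reading of your claim that produces a 32-dimensional space without collapsing $\tilde V$ to a line, which you never justify.

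The paper's proof resolves precisely this difficulty by \emph{not} trying to describe the eigenspace as a tensor product. It instead writes down a map
\[
\Phi \colon (\q(2)\oplus\q(2))\otimes_\R\q(2) \longrightarrow \q(2)\oplus\q(2), \qquad (A_0,A_1)\otimes B \longmapsto \bigl(A_0B^{\ast},\ A_1\,\mathrm{Ad}(\alpha(\Gamma))(B^{\ast})\bigr),
\]
which genuinely \emph{mixes} the $B$-factor into the $A_0,A_1$ data, and then checks that $\Phi$ is invariant under right multiplication by $f(\epsilon_i e_i)$ (so the $-1$-eigenspaces lie in $\ker\Phi$), is $Spin(5)\times Spin(4)/(\Z/2\Z)$-equivariant, and is surjective with $\dim(\q(2)\oplus\q(2)) = \dim\tilde S$. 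That is what forces $\Phi|_{\tilde S}$ to be an isomorphism. Your proof is missing this coupling map entirely; without it, there is no correct identification of $\tilde S$ with $\q^2\oplus\q^2$, and the subsequent checks of the grading, $c_{TZ}$, and the $Cl_{-1}$ action have nothing to attach to. To repair the argument you would need to either produce $\Phi$ (or some equivalent explicit isomorphism that couples the factors) or compute the eigenspace honestly as a subspace of the full tensor product, not a split tensor product.
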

\begin{proof}

We identify 
$
Cl_{(5,4)} $ with $ (\q(2) \oplus \q(2))\otimes_{\R} \q(2)
$
 by using the isomorphism $f$ in Definition \ref{basis}. 
Let $\Phi$ be a map 
\[
(\q(2) \oplus \q(2)) \otimes_{\R} \q(2)
\ni (A_0, A_1) \otimes B
\mapsto (A_0 B^*, A_1 Ad(\alpha(\Gamma))(B^*)) \in \q(2) \oplus \q(2)
\]
 where $B^{\ast}$ is transpose matrix of $\bar B$. (We will denote by $\bar {\cdot}$ the quaternion conjugate.)
We define $Spin(5) \times Spin(4) /(\Z/2\Z)$ action on the space left hand side by 
\[
[q,(u_0,u_1)] (A_0,A_1) =(qA_0\begin{pmatrix}u_0^{-1}&0\\0&u_1^{-1}\end{pmatrix}, qA_1\begin{pmatrix}u_0^{-1}&0\\0&u_1^{-1}\end{pmatrix})
\] 
 for $[q,(u_0,u_1)] \in Spin(5) \times Spin(4) /(\Z/2\Z) \cong Spin(5) \times (Sp(1) \times Sp(1)) /(\Z/2\Z)$. For this action, we see $\Phi$ is a $Spin(5) \times Spin(4) /(\Z/2\Z)$ equivariant map because $\text{diag}(u_0^{-1},u_1^{-1})$ commutes with $\alpha(\Gamma)$.   
   
The map $\Phi$ is invariant under the right multiplication of $f(\epsilon_i e_i)$ for $i=1,2,3,4$. Therefore, the $-1$-eigenspaces of $f(\epsilon_i e_i)$ for some $i=1,2,3,4$ are the subspace of $\ker \Phi$. We see at once $\Phi$ is surjective and the dimension of $\q(2) \oplus \q(2)$ is equal to the dimension of $\tilde S$. Thus we have that the restriction of $\Phi$ to the representation space $\tilde S$ is a $Spin(5) \times Spin(4) /(\Z/2\Z)$ equivariant isomorphism.

We see the $Spin(5) \times Spin(4) /(\Z/2\Z)$ representation $\q(2) \oplus \q(2)$ is equivalent to the representation $S' \oplus S'$ through  
an isomorphism given by identifying each column of the matrix with an element of $S'_0, S'_1$. 
The $\Z/2\Z$ grading of $\tilde S$ is defined by the $\Z/2\Z$ grading of $Cl_{(5,4)}$. It is easy to see that this $\Z/2\Z$ grading of $\tilde S$ coincides with the $\Z/2\Z$ grading of $S' \oplus S'$. Moreover, we see at once $\Phi$ preserves the Clifford multiplication $c_{TZ}$ and $f(\epsilon_0)$. This completes the proof. 
\end{proof}

\begin{remark}\label{newremark}
If we change the basis of $S' \oplus S'$ by the isomorphism $(\phi_0, \phi_1) \mapsto \left(\frac{1}{2}(\phi_0+\phi_1), \frac{1}{2}(\phi_0-\phi_1)\right)$, we change the $\Z/2\Z$ grading of $S' \oplus S'$ and the Clifford multiplication $c_{TZ}$. The $\Z/2\Z$ grading is given by $S' \oplus 0$ is even and $0 \oplus S'$ is odd. The Clifford multiplication $c_{TZ}$ and the right $\epsilon_0$ action are given by the matrix
\[
c_{TZ}(\epsilon_j)= 
  \begin{pmatrix}
     0& \beta(\epsilon_j)\\
    \beta(\epsilon_j)&0 
  \end{pmatrix}, \;
\cdot \epsilon_0 =  
   \begin{pmatrix}
     0& 1\\
    1&0 
  \end{pmatrix}.
\]
Note that $c_{TZ}$ and right $\epsilon_0$ action commute.   
\end{remark}

\section{The orientability of $Pin^-(2)$ monopole moduli space}

\label{moduli節} %モジュライ
\subsection{The orientability and mod $2$ indeces}

In section \ref{eg}, we consider the Spinor bundle and index of the $G^+(5,0,4)$ structure.
In this section, we translate the orientability of the $Pin^-(2)$ monopole moduli space into a $\bmod 2$ index on a five dimensional manifold
\footnote{Originally, Mikio Furuta translated the orientability of $Pin^-(2)$ monopole moduli space into a $\bmod 2$ index associated with a $3$ dimensional submanifold with $G^+(3,0,2)$ structure. Nobuhiro Nakamura wrote Furuta's idea in the note ~\cite{Nakamuranote}. The argument in this note is the neck streching argument. In this paper, we use Witten deformation to determmine the orientability of $Pin^-(2)$ monopole moduli space. }.
From Theorem \ref{main}, we give a topological criterion for the determinant line bundle on the ambient space of the $Pin^-(2)$ monopole moduli space to be trivial. 

\begin{definition}
We define a group $Spin^{c-}(4)$ by
\[
Spin^{c-}(4):=Spin(4) \times Pin^-(2)/ (\Z/2\Z).
\]
where $\Z/2\Z \cong \{ (1,1), (-1, -1)\} \in Spin(4) \times Pin^-(2)$.  
\end{definition}
\begin{remark}
By the definition above, we see that $Spin^{c-}(4)$ is a subgroup of $G^+(4,0,3)$ by using the natural embedding $Spin(4) \subset Cl_{(+4)}$.  
\end{remark}
\begin{definition}
We define a $Spin^{c-}$ structure on Riemannian $4$-manifold $X$ as a $G^+(4,0,3)$ structure whose structure group reduces to $Spin^{c-}$. 
\end{definition}
\begin{definition}%~\cite{Nakamura1},~\cite{Nakamura2}
Let $X$ be a $4$-manifold and $\mathfrak{s}_X$ be a $Spin^{c-}$ structure on $X$. We will denote by $\tilde P$ a principal $Spin^{c-}(4)$ bundle given by the $Spin^{c-}$ structure $\mathfrak{s}_X$. 
\begin{itemize}
\item We identifies $\R^4$ with $\q$. We define the representation of $Spin^{c-}(4)$ 
\[
\Delta^{\pm} \colon Spin^{c-}(4) \to GL(4,\R)
\]
and 
\[
\rho \colon Spin^{c-}(4) \to GL(4,\R)
\]
by
\[
\Delta^{\pm}([q^+, q^-, u])\phi=q^{\pm}\phi u^{-1}, \; \rho([q^+, q^-, u])v=q^+ v (q^-)^{-1}.
\]
We will denote by $S^{\pm}$ the associated vector bundle given by the representation $\Delta^{\pm}$ and the principal $Spin^{c-}$ bundle $\tilde{P}$. The associated vector bundle given by the representaion $\rho$ is $TX$. 
\item We define a Clifford multiplication
\[
c \colon TX \otimes S^+ \to S^-
\]
 by $c(v \otimes \phi)=\bar{v}\phi$. We will denote by $c(v)\phi$ this Clifford action. We define
\[
c^* \colon TX \otimes S^- \to S^+ 
\]
by $c^*(v \otimes \phi)=v\phi$. By abuse of notation, we use the same letter $c(v)\phi$. We have $c(v)^2 =  \lvert v \rvert^2$ for $v \in T_x X$. 
\item A connection $A$ on $\tilde P$ is $Spin^{c-}$ connection if it satisfies 
\[
\nabla^{A}(c(X)\phi)=c(\nabla^{LC}X)\phi + c(X)\nabla^{A}\phi
\]
for $X \in \Gamma(TX), \phi \in \Gamma(S^+)$ where $\nabla^{LC}$ is the Levi-Civita connection of $X$. 

\item Let $A$ be a $Spin^{c-}$ connection. We define a Dirac operator $D^{\pm}_A$ associated to $A$ by the composition of following maps:
\[
\begin{CD}
\Gamma(S^{\pm}) @>{\nabla^{A}}>> \Gamma(TX^* \otimes S^{\pm}) \cong \Gamma(TX \otimes S^{\pm}) @>{c}>> \Gamma(S^{\mp})
\end{CD}.
\]
 
\end{itemize}
\end{definition}

The notion of the $Spin^{c-}$ structure is introduced by Nobuhiro Nakamura in ~\cite{Nakamura1}, ~\cite{Nakamura2} to define the $Pin^-(2)$ monopole equations and the $Pin^-(2)$ monopole invariant. 
The $Pin^-(2)$ monopole invariant is a $\Z/2\Z$-valued invariant.  
The matter when we  determine the orientability of the $Pin^-(2)$ monopole moduli space is whether a gauge transformation preserves an orientation of $\ind D_A$.  

To define a gauge transformation, we introduce some associated vector bundles and their Clifford actions. 

\begin{definition}
Let $X$ be a $4$-manifold and  $\mathfrak{s}_X$ be a $Spin^{c-}$ structure on $X$. Let $\tilde P$ be the principal $Spin^{c-}(4)$ bundle given by the $Spin^{c-}$ structure $\mathfrak{s}_X$. 
\begin{itemize}
\item We identify $\R^2$ with the subspace $\{z \in \q \mid z=x+iy,\; x, y \in \R \}\cong \C$. We define the real $Spin^{c-}(4)$ representation
\[
\rho'_0 \colon Spin^{c-}(4) \to GL(2,\R)
\]
 by $\rho'_0([q^+, q^-, u])z=uzu^{-1}$ where the multiplication is that of $\q$. 
\item We identify $\R^2$ with the subspace $\{ jw \in \q \mid w \in \C \}$. We define the real $Spin^{c-}(4)$ representation
\[
\rho'_1 \colon Spin^{c-}(4) \to GL(2,\R)
\]
 by $\rho'_1([q^+, q^-, u])jw=ujwu^{-1}$ where the multiplication is that of $\q$. 
\end{itemize}
It is easy to check the well-definedness of the above definition. We will denote by $\tilde{\C}, E$ the associated vector bundles given by $\tilde P$ and the representations $\rho'_0, \rho'_1$ respectively. 
\end{definition}

\begin{definition}
We define the multiplication $S \otimes (\tilde{\C}\oplus E) \to S$ as follows. Note that the representation space of $\rho_0 \oplus \rho_1$ and $\Delta^+ \oplus \Delta^-$ is $\q$ and $\q^2=\q_+ \oplus \q_-$ respectively. We define a multiplication of elements in these representation space by 
\[
 \phi \cdot v :=\phi v
\] for $v \in \q$ and $\phi \in \q^2$. This is a $Spin^{c-}(4)$ equivariant multiplication. This defines a multiplication $S=S^+ \oplus S^-$ and $\tilde{\C}\oplus E$. 
We define the multiplication $(\tilde{\C} \oplus E) \otimes (\tilde{\C} \oplus E) \to \tilde{\C} \oplus E$ in the same way.
\end{definition}

\begin{remark}
Regarding the $Spin^{c-}$ structure $\mathfrak s_X$ as a  $G^+(4,0,3)$ structure, the vector bundle $\det E \oplus E$ is the vector bundle $E_-$ associated with $\mathfrak s_X$. (c.f Definition\ref{def-G-+}. ) From the definition of $\tilde{\C}$ and $E$, we have $\text{Im}(\tilde{\C}) \cong \det E$. From now on, we fix an isomorphism $\text{Im}(\tilde{\C}) \cong \det E$ and identify $\text{Im}(\tilde{\C})$ with $\det E$. If we take another isomorphism, arguments do not change. 
\end{remark}

\begin{definition}
We call an element of $\{ u \in \Gamma(\tilde{\C}) \mid \lvert u \rvert = 1 \}$ a gauge transformation. 
\end{definition}

Now we state the topological method of evaluating the orientability of $Pin^-(2)$ monopole space. 

\begin{theorem}\label{mukimuki}
Let $u$ be a gauge transformation. We perturb $u$ if necessary and we may assume that $-1$ is a regular value of $u$. Let  $h$ be a section of $E$ such that $h$ is transverse to the zero section and its submanifold $u^{-1}(-1) \subset X$. Then there is a natural $Spin$ structure on $h^{-1}(0) \cap u^{-1}(-1)$. This defines an element of $1$ dimensional $Spin$ bordism group $\Omega_1^{Spin}(pt) \cong \Z/2\Z$ and we denote by $\text{t-}\ind(\mathfrak s_X, u)$ this element. The following statements are equivaleant: the gauge transformation $u$ preserves the orientation the index bundle $\{\ind(D_A)\}_A$ on the configuation space and $\text{t-}\ind(\mathfrak s_X, u)$ is trivial. 
\end{theorem}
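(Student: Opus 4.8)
The plan is to reduce the statement about the orientation of the index bundle $\{\ind(D_A)\}_A$ over the configuration space to the $KO^{-1}(pt)$-valued index of a $G^+(5,0,4)$ structure on a suitable $5$-manifold, and then to apply the localization result Theorem~\ref{main} (together with the computations of Section~\ref{G^+(5,0,4)}) to identify that index with $\text{t-}\ind(\mathfrak s_X,u) \in \Omega_1^{Spin}(pt)$. The central mechanism is a mapping-torus construction: a gauge transformation $u$ acts on $X$ (equivalently, on the $Spin^{c-}(4)$ bundle data), and the question of whether $u$ preserves an orientation of the real index bundle of the Dirac operator is detected by the mod~$2$ index of a Dirac-type operator on the mapping torus $Z = X \times [0,1]/{\sim_u}$, a closed $5$-manifold.

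First I would make the mapping torus precise. Since $u \in \Gamma(\tilde{\C})$ with $|u|=1$ takes values in a circle bundle, cutting along $u^{-1}(-1)$ the gauge transformation can be homotoped so that the clutching is supported near that hypersurface; more robustly, I would build $Z$ as $X \times S^1$ with the $Spin^{c-}$ data twisted by $u$ along the $S^1$ direction, so that $Z$ carries a natural $G^+(5,0,4)$ structure $\mathfrak s_Z$ whose restriction to each slice $X \times \{t\}$ is $\mathfrak s_X$ and whose $TZ$-Clifford module is (up to the $Cl$-bookkeeping of Lemma~\ref{reduction} and Remark~\ref{newremark}) the suspension of $S^+ \oplus S^-$ with an extra generator $c_{TZ}(\epsilon_0)$ playing the role of the $S^1$-direction. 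The point of the $Spin^{c-}$-versus-$G^+(4,0,3)$ comparison (Remark after the definition of $Spin^{c-}(4)$) and the identification $\operatorname{Im}(\tilde{\C}) \cong \det E$ is that all the reflection/determinant twisting needed for a $G^+(5,0,4)$ structure on $Z$ is already available. Then, by the Lemma in Section~\ref{G^+(5,0,4)}, $\ind(\mathfrak s_Z) \in KO^{-1}(pt) \cong \Z/2\Z$ equals $\dim\ker(\tilde D|_{\tilde S^+}) \bmod 2$; and by the standard spectral-flow/suspension interpretation of the $KO^{-1}$ index, $\ind(\mathfrak s_Z)$ is exactly the obstruction to extending a trivialization (orientation) of the real index bundle $\{\ind D_A\}$ over the loop in configuration space determined by $u$. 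This is the step asserting ``$u$ preserves the orientation iff $\ind(\mathfrak s_Z) = 0$''.

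Next I would apply Theorem~\ref{main} to $\mathfrak s_Z$: with $n=5$, $s^+=0$, $s^-=4$, the characteristic submanifold is $C = h^{-1}(0)$ for a transverse section $h$ of $E_- = \det E \oplus E$, and Theorem~\ref{loc} endows $C$ with a $G^+(1,0,0)$ structure, i.e. a $Spin$ structure, with $[C,\mathfrak s_C] \in \Omega_1^{Spin}(pt)$ mapping to $\ind(\mathfrak s_Z) \in KO^{-1}(pt)$ — and this map $\Omega_1^{Spin}(pt) \to KO^{-1}(pt)$ is an isomorphism $\Z/2\Z \to \Z/2\Z$ (it is the Atiyah--Milnor--Singer mod~$2$ index in dimension one). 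So $\ind(\mathfrak s_Z) = 0$ iff $[C,\mathfrak s_C]$ is trivial in $\Omega_1^{Spin}(pt)$. Finally I would identify $C$ with $h^{-1}(0) \cap u^{-1}(-1) \subset X$: the section $h$ of $E$ on $X$ pulls back to $Z$, and a transverse section of $E_- = \det E \oplus E$ on $Z$ can be arranged, using that $\det E \oplus E$ is the relevant normal data to $u^{-1}(-1)$, so that its zero set is the product of $h^{-1}(0) \cap u^{-1}(-1)$ with a point (or a circle's worth collapsed appropriately in the mapping-torus picture), carrying the $Spin$ structure of Theorem~\ref{mukimuki}; by Lemma~\ref{bordism} the class is independent of the choice of $h$.

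The main obstacle I expect is the second step: rigorously matching the analytic orientation obstruction for the family $\{D_A\}$ of $Spin^{c-}$ Dirac operators on $X$ with the $KO^{-1}$-index $\ind(\mathfrak s_Z)$ of the geometric $5$-dimensional operator $\tilde D$ on the mapping torus. This requires care with the real (quaternionic) structures: one must check that the $Cl_{-1}$-module structure on $\tilde S$ from Section~\ref{G^+(5,0,4)}, under the suspension, corresponds exactly to the $Sp(1)$- (equivalently $Pin^-(2)$-) equivariance that makes $\ind D_A$ a real index bundle whose first Stiefel--Whitney class is the orientation obstruction, and that the gauge loop generated by $u$ is the one whose associated $w_1$-pairing is computed by spectral flow of $\tilde D$ around $S^1$. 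The identification of the $\Z/2\Z$ grading and Clifford action in Remark~\ref{newremark} is precisely what makes this bookkeeping tractable, so I would lean on it; but verifying that no extra sign or orientation convention enters when passing from the configuration-space family to the closed-manifold index is the delicate point, and I would isolate it as a separate lemma before assembling the proof.
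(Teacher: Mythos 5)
Your proposal follows essentially the same route as the paper: form the mapping-torus $Z=X\times S^{1}$ twisted by $u$, endow it with a $G^{+}(5,0,4)$ structure whose spinor bundle matches the suspension of $S^{+}\oplus S^{-}$ (the paper's Lemma \ref{reduction} and Remark \ref{newremark}), identify the $KO^{-1}(pt)$ index of this structure with the mod-$2$ obstruction to orienting the real family index over the gauge loop (the paper's Lemmas \ref{gaugeindex} and \ref{indexindex}), and then invoke Theorem \ref{main} to localize to the characteristic $1$-manifold, which is identified with $h^{-1}(0)\cap u^{-1}(-1)$ (the paper's Lemma \ref{spin1}, where the transverse section of $E_{-}=L\oplus\pi^{*}E$ on $Z$ --- not $\det E\oplus E$, though these agree on a slice via $\mathrm{Im}(\tilde{\C})\cong\det E$ --- is built explicitly as $s_{0}(t)=(1-t)+tu$, $s_{1}(t)=h$). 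The step you flag as delicate is exactly the one the paper isolates into Lemmas \ref{gaugeindex}--\ref{indexindex}, so your outline is a faithful blueprint of the actual proof.
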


\begin{corollary}
If we have $\text{t-}\ind(\mathfrak s_X, u)=0$ for all gauge transformation $u$, the $Pin^-(2)$ monopole moduli space is orientable. 
\end{corollary}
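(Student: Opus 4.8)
The plan is to reduce the orientability question to the $KO^{-1}$-valued index of a $G^{+}(5,0,4)$ structure and then apply the localization result Theorem \ref{main}. First I would set up the five-dimensional manifold on which the relevant index lives. A gauge transformation $u\in\Gamma(\tilde{\C})$ with $|u|=1$, together with the $Spin^{c-}$ data on $X$, determines a path of Dirac operators: since $\tilde{\C}$ carries a natural circle-type action (multiplication by elements of the unit circle in $\tilde{\C}$, using $\mathrm{Im}(\tilde{\C})\cong\det E$), the transformation $u$ gives a loop in the configuration space, and the obstruction to $u$ preserving the orientation of the index bundle $\{\ind(D_A)\}_A$ is the first Stiefel--Whitney class of that index bundle paired with the loop. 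The standard move is to realize this $w_1$-pairing as a mod $2$ spectral-flow, i.e. as the mod $2$ index of a Dirac-type operator on the mapping torus $Z$ of $u$ acting on $X$: concretely $Z = X\times[0,1]/(x,0)\sim(\text{action of }u\text{ on }x,1)$, a closed $5$-manifold. The first step, then, is to check that $Z$ inherits a natural $G^{+}(5,0,4)$ structure $\mathfrak{s}$ from $\mathfrak{s}_X$ and $u$, with the role of $E_-$ played by (roughly) $\det E\oplus E$ pulled back suitably over the mapping torus, and that the associated $KO^{-1}(pt)\cong\Z/2\Z$ index of $\mathfrak{s}$ equals the sought obstruction.

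Second, I would identify a characteristic submanifold of $\mathfrak{s}$. The section $h$ of $E$ on $X$, transverse to zero along $u^{-1}(-1)$, should be assembled with the ``$u=-1$'' locus into a transverse section of the rank-$4$ bundle $E_-$ over $Z$: the $\det E$-component of $E_-$ records where the loop passes through $-1$ (this cuts $X\times[0,1]$ down to the codimension-$1$ piece over $u^{-1}(-1)$), and the $E$-component is $h$ itself, cutting down further by codimension $3$. Its zero set is exactly $h^{-1}(0)\cap u^{-1}(-1)$, a $1$-manifold, and by Theorem \ref{loc} it carries a natural $G^{+}(5-4,0,0)=G^{+}(1,0,0)$ structure, which is just a $Spin$ structure. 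This gives the class $\text{t-}\ind(\mathfrak s_X,u)\in\Omega_1^{Spin}(pt)\cong\Z/2\Z$. Then Theorem \ref{main} (together with Remark \ref{indNC=indC} and the $G^{+}(5,0,4)$ analysis of Section \ref{G^+(5,0,4)}, in particular Lemma \ref{reduction}) yields $\ind(\mathfrak{s}) = [\text{the }Spin\text{ bordism class of }h^{-1}(0)\cap u^{-1}(-1)]$ under $\Omega_1^{Spin}(pt)\to KO^{-1}(pt)$, which is an isomorphism $\Z/2\Z\xrightarrow{\sim}\Z/2\Z$. Combining with the first step, $u$ preserves the orientation of the index bundle if and only if $\text{t-}\ind(\mathfrak s_X,u)=0$.

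The main obstacle I expect is the first step: carefully matching the analytic obstruction (the $w_1$ of the real index bundle $\{\ind D_A\}$ evaluated on the loop defined by $u$) with the topological index $\ind(\mathfrak{s})$ of the $G^{+}(5,0,4)$ structure on the mapping torus. This requires (i) verifying that the family of skew-adjoint Dirac operators $D_A$ over the circle, twisted by $u$, is the restriction to $Z$ of the single Dirac-type operator $\Slash D$ of $\mathfrak{s}$ on $Z$ in the sense of Definition \ref{defineindex} — using the identification of spinor bundles from Lemma \ref{reduction} and Remark \ref{newremark}, where the extra $Cl_{-1}$ factor is precisely the ``time direction'' $[0,1]$ of the mapping torus — and (ii) checking that the $KO^{-1}$ class so produced equals mod $2$ spectral flow, i.e. the orientation obstruction. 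The bordism-invariance of $\text{t-}\ind$ (independence of the choices of $u$ within its homotopy class, of the perturbation making $-1$ regular, and of $h$) follows from Lemma \ref{bordism}; the remaining verifications are the sign/grading bookkeeping of Section \ref{G^+(5,0,4)}, which are routine but must be done consistently with the conventions fixed in Definition \ref{basis}.
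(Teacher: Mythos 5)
You have proved (in outline) Theorem~\ref{mukimuki}, not the Corollary. The Corollary is an immediate logical consequence of the theorem, and the paper gives it no separate argument: if $\text{t-}\ind(\mathfrak s_X,u)=0$ for every gauge transformation $u$, then by Theorem~\ref{mukimuki} each $u$ preserves the orientation of the index bundle $\{\ind D_A\}$ over the configuration space, so the determinant line bundle is gauge-invariantly orientable and descends to a trivial (orientable) line bundle on the quotient; hence the moduli space is orientable. That one-step deduction is the entire content of the Corollary, and your proposal never actually states it.

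As for the argument you do give (for the theorem), it matches the paper's strategy: package the twist by $u$ into a $G^+(5,0,4)$ structure $\mathfrak s_Z$ on $Z=X\times S^1$, identify the family $\bmod\,2$ index with $\ind(\mathfrak s_Z)\in KO^{-1}(pt)$ via Lemma~\ref{gaugeindex}, Lemma~\ref{indexindex}, Lemma~\ref{reduction} and Remark~\ref{newremark}, then localize by Theorem~\ref{main} to the one-dimensional $C=h^{-1}(0)\cap u^{-1}(-1)$. Two details are off, however. First, $u$ is a gauge transformation and does not act on $X$, so $Z$ is simply $X\times S^1$ rather than a ``mapping torus of $u$ acting on $X$''; the twist lives in the bundle $L=\pi^*\tilde\C/((0,z)\sim(1,zu))$ and in $\tilde P_Z$, as in the paper. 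Second, the relevant rank-$4$ bundle on $Z$ that plays the role of $E_-$ is $L\oplus\pi^*E$, which splits as $2+2$, not $\det E\oplus E$ as $1+3$: the section $s_0(t)=(1-t)+tu$ of the rank-$2$ bundle $L$ vanishes precisely when $t=1/2$ and $u=-1$ simultaneously, cutting down by codimension $2$, and then $s_1=h\in\Gamma(\pi^*E)$ cuts down by a further codimension $2$, giving the $1$-manifold $C$; your ``codimension $1$ then codimension $3$'' bookkeeping is wrong even though the total comes out right.
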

We begin the preparation of the proof of Theorem \ref{mukimuki} 

\begin{definition}
Let $X$ be a $4$-manifold and  $\mathfrak{s}_X$ be a $Spin^{c-}$ structure on $X$. Let $S=S^+ \oplus S^-$ be the spinor bundle of $\mathfrak s_X$ and $u$ be a gauge transformation. 
 
\begin{itemize}
\item We define the vector bundle $L$ on $X \times S^1$ as follows. Let $\pi \colon [0,1] \times X \to X$ be the projection. We introduce an equivalence relation on $\pi^* \tilde{\C}$ by $(0,z) \sim (1,zu)$ for $z \in \tilde{\C}$ and $L$ is a quotient of this equivalence relation $L=\pi^* \tilde{\C}/\sim$. Note that $L$ has the canonical left $\pi^* \tilde{\C}$ action.

	\item Let $\tilde{\pi} \colon  S^1 \times X \to X$ be the projection. By abuse of notation, we use the same letter $\tilde{\C}$ for $\tilde{\pi}^* \tilde{\C}$. 
\item $V=V^+ \oplus V^-$ is the $\Z/2\Z$ graded vector bundle given by $V=\tilde{\pi}^* S \otimes_{\tilde{\C}} (L \oplus \tilde{\C})$, $V^+=\tilde{\pi}^* S$ and $V^-=\tilde{\pi}^* S \otimes_{\tilde{\C}}L$ where $\otimes_{\C}$ is a tensor product of $\tilde{\C}$ module. 
\end{itemize}
\end{definition}

\begin{lemma}\label{gaugeindex}
We define a skew-adjoint Dirac type operator $D$ on $V$ as follows: 
\begin{itemize}
\item Let $A$ be a $Spin^{c-}$ connection on $X$ and $D_A$ is the Dirac operator on $X$ given by $A$.
\item Let $A_t$ be a one-parameter family of $Spin^{c-}$ connection on $X$ such that $A_t = A$ for $t<1/3$ and $A_t = u^* A$ for $t>2/3$.  
\item Let $\epsilon$ be the $\Z/2\Z$ grading operator of $S$. Let $\epsilon'$ be the operator on $L \oplus \tilde{\C}$ given by $1 \oplus (-1)$. 
\item We define the Dirac type operator on $X \times S^1$ by
\[
D=D_t \otimes pr_{L}+ D_A \otimes pr_{\tilde{\C}} + \epsilon \partial_t \otimes \epsilon'
\]
where $t$ is the coordinate of $S^1$ and $pr_{L}, pr_{\tilde{\C}}$ are the projections to $L, \tilde{\C}$ respectively. 
\end{itemize}
Then the operator $D$ is well defined on $ S^1 \times X$. 
It is equivalent that $u$ preserves an orientation of the index bundle $\{\ind(D_A)\}$ on the configuation space and $\dim \ker D \mod 2 = 0$. 
\end{lemma}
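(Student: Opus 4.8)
The plan is to analyze the operator $D$ on $S^1 \times X$ as a suspension/mapping-torus construction and to identify $\ker D \bmod 2$ with a $KO^{-1}(\mathrm{pt}) \cong \Z/2\Z$ index, which by the work of Section \ref{eg} and Lemma \ref{reduction} is exactly an index of a $G^+(5,0,4)$ structure. First I would check the well-definedness claim: the vector bundle $V = \tilde\pi^* S \otimes_{\tilde\C}(L \oplus \tilde\C)$ is glued over $t = 0 \sim 1$ by the gauge transformation $u$ acting on the $L$-factor, and the operator $D_t \otimes pr_L + D_A \otimes pr_{\tilde\C} + \epsilon\partial_t \otimes \epsilon'$ must be checked to be compatible with this gluing. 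The terms $D_t$ and $A_t$ are chosen to interpolate between $A$ and $u^*A$, so $D_t$ descends to $S^1 \times X$; the term $\epsilon\partial_t\otimes\epsilon'$ is translation-invariant in $t$ hence descends; and one checks skew-adjointness from the skew-adjointness of $D_A$, the odd degree of $\epsilon$, and integration by parts in $t$ (the boundary terms cancel because of the periodicity induced by $u$). This is essentially the standard construction of the determinant-line/mapping-torus index, so I would keep it brief.

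Next I would set up the equivalence with orientation-preservation. The index bundle $\{\ind(D_A)\}_A$ over the (contractible) space of $Spin^{c-}$ connections is a real vector bundle; a gauge transformation $u$ acts on this space and lifts to an action on the index bundle, and $u$ preserves an orientation if and only if the induced automorphism of $\det\ind(D_A)$ is $+1$, equivalently if the real line bundle over $S^1$ obtained by gluing $\det\ind(D_{A_t})$ via $u$ is trivial (orientable). The first Stiefel–Whitney class of that line bundle over $S^1$ is precisely the mod $2$ spectral flow of the family $\{D_{A_t}\}_{t\in[0,1]}$ from $D_A$ to $D_{u^*A} \cong u^* D_A$, and the suspension operator $D = D_t\otimes pr_L + D_A\otimes pr_{\tilde\C} + \epsilon\partial_t\otimes\epsilon'$ computes exactly this: $\dim\ker D \bmod 2$ equals the mod $2$ spectral flow, by the usual index = spectral flow argument (one diagonalizes in the $t$-direction by Fourier series and counts zero modes, or invokes the $KO$-theoretic version of the Atiyah–Patodi–Singer suspension formula). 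Hence $\dim\ker D \bmod 2 = 0$ if and only if $w_1$ of the glued determinant line vanishes if and only if $u$ preserves the orientation.

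The remaining point — which connects this lemma to the rest of the paper but is not strictly needed for the statement as written — is to recognize $(D,\epsilon,Cl_{-1},L^2(S^1\times X, V))$ as the index of a $G^+(5,0,4)$ structure on $S^1\times X$: the $\tilde\C$-linear structure, the operator $\epsilon'$ of square $1$, and the grading $\epsilon$ together furnish the extra Clifford generators upgrading the $Spin^c$-type data on $X$ to the $G^+(5,0,4)$ data on the $5$-manifold $S^1\times X$, with $E_- = \det E \oplus E$ pulled back and $\tilde\C$ supplying the needed reduction. This is where Lemma \ref{reduction}, Remark \ref{newremark}, and the $\q(2)$-computations of Definition \ref{basis} are used to match $\ker D$ with $\ker(\tilde D|_{\tilde S^+})$.

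The main obstacle I expect is the bookkeeping in verifying skew-adjointness and well-definedness of $D$ on the mapping torus: one must carefully track how the $\tilde\C$-module tensor product interacts with the gluing by $u$, how $\epsilon$ and $\epsilon'$ anticommute with the relevant operators, and ensure the $t$-derivative term has the right sign so that $D$ is genuinely skew-adjoint after descent. The conceptual content — index equals mod $2$ spectral flow equals obstruction to orientability — is standard, so the proof is really about setting up the operator cleanly; once that is done, the equivalence $\dim\ker D \bmod 2 = 0 \iff u$ preserves orientation follows from the deformation invariance of the $KO^{-1}$-index together with the identification of $KO^{-1}(\mathrm{pt})$ with $\Z/2\Z$.
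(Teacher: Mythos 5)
Your proposal is correct and follows essentially the same route as the paper: both identify the obstruction to $u$ preserving orientation with the reduced family index in $KO^0(S^1,\mathrm{pt})\cong\Z/2\Z$ (your ``mod $2$ spectral flow'' / $w_1$ of the glued determinant line is the same invariant), and then identify that family index with $\dim\ker D \bmod 2$ via the suspension formula, which the paper simply cites as ``from index theory.'' Your additional remarks on well-definedness and skew-adjointness of $D$ are sound but are taken for granted in the paper's terse proof.
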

\begin{proof}
The four tuples $(D_t \otimes pr_{L}+ D_A \otimes pr_{\tilde{\C}},  \epsilon \otimes \epsilon', 0, L^2(X \times S^1, V))$ defines a family index in $KO^{0}(S^1, pt)$. This family index coincides with $\ind(D_t) - \ind(D_A)$ when we use the definition of $KO^0(S^1, pt)$ as the subgroup of the Grothendiek group of real vector bundles on $S^1$. 
We see at once the family index $\ind(D_t) - \ind(D_A)$ is trivial if and only if $u$ preserves an orientation of $\ind(D_A)$ by the definition of $V$ and $D_t$. 

%If it is necessary, we replace $L^2(X \times S^1, V)$ by $L^2(X \times S^1, V) \oplus (\R^m \oplus \R^m)$ and perturb the operator $D_t \otimes pr_{L}+ D_A \otimes pr_{\tilde{\C}}$ by skew-adjoint odd compact operator. We will denote by $D'$ the perturbed operator. We may assume that $\ker D' \cong (\R^k \oplus \gamma^{m-k}) \oplus \R^m$ where $\gamma$ is Mobious bundle on $S^1$ and the $\Z/2\Z$ grading is given by $\R^k \oplus \gamma^{m-k}$ is even and $\R^m$ is odd.  

We see that this family index coincides with $\dim \ker D \mod 2 \in \Z/2\Z \cong KO^0(S^1, pt)$ from index theory. 
%%%%%%%%%%%%%%%%%%%%%%%%%%%%%%%%%%%%%%%%%%%%%%%%%%%%%%%%%%%%%%%%%%%%%%%%%%%%%%%%%%%%%%%%%%%%%%%%%必要ならば$V$に自明な有限次元$\Z/2\Z$次数付きベクトル空間$\R^m \oplus \R^m$を直和してコンパクト作用素によって反対称でoddな作用素のまま摂動することにより, $S^1$上の$\Z/2\Z$次数付きベクトル束として$\ker D_t \otimes pr_{L}+ D_A \otimes pr_{\tilde{\C}} \cong \gamma^m \oplus \R^m$であるとしてよい. ただし, $\gamma$は$S^1$上のメビウス束で$\Z/2\Z$次数付けは$\gamma^m$が$+$, $\R^m$が$-$である. このとき$\epsilon \partial_t \otimes \epsilon'
%$は$D_t \otimes pr_{L}+ D_A \otimes pr_{\tilde{\C}}$と反可換なので$\ker D_t \otimes pr_{L}+ D_A \otimes pr_{\tilde{\C}}$を保つoddな作用素である. したがって, $\epsilon \partial_t \otimes \epsilon'$を$ker D_t \otimes pr_{L}+ D_A \otimes pr_{\tilde{\C}}$に制限した作用素がkernelを持つことと$\gamma^m$が$\R^m$と同型であることが同値である. $\epsilon \partial_t \otimes \epsilon'
%$は$D_t \otimes pr_{L}+ D_A \otimes pr_{\tilde{\C}}$と反可換なので$D$のkernelは$D_t \otimes pr_{L}+ D_A \otimes pr_{\tilde{\C}}$のkernelと$\epsilon \partial_t \otimes \epsilon'$のkernelの交わりであるからこれで証明が終わる. 
%%%%%%%%%%%%%%%%%%%%%%%%%%%%%%%%%%%%%%%%%%%%%%%%%%%%%%%%%%%%%%%%%%%%%%%%%%%%%%%%%%%%%%%%%%%%%%
\end{proof}

\begin{definition}
We define the $G^+(5,0,4)$ structure $\mathfrak{s}_{Z}$ on $Z=X \times S^1$ as follows: 
Let $\tilde{P}$ be the principal $Spin^{c-}(4)$ bundle on $X$ given by $Spin^{c-}$ structure $\mathfrak s_X$ and $\pi \colon [0,1] \times X \to X$ be the projection. 
We will denote by $\iota$ an embedding $Spin^{c-}(4) \cong Spin(4) \times_{\Z/2\Z} Pin^-(2) \to Spin(5) \times_{\Z/2\Z} Spin(4) \cong Spin(5) \times_{\Z/2\Z} (Sp(1) \times Sp(1))$ given by $[q, u'] \mapsto [i(q), (u',u')]$ where $i$ is a embedding $Spin(4) \to Spin(5)$ which is a lift of a map $A \mapsto \text{diag}(1,A)$. Let $u$ be a gauge transformation of $\mathfrak s_X$.
We define a principal $Spin(5) \times_{\Z/2\Z} Spin(4)$ bundle $\tilde{P}_Z$ on $Z$ by $\pi^* \tilde{P} \times_{\iota} (Spin(5) \times_{\Z/2\Z} Spin(4))/ \sim$ where $\sim$ is an equivalent relation given by 
\[
(0, p) \sim (1, pu),\; p \in \tilde{P} \times_{\iota} (Spin(5) \times_{\Z/2\Z} Spin(4)).
\] 
It is easy to see that $\tilde{P}_Z$ defines a $Spin$ structure of $TZ \oplus L  \oplus \pi^* E$. We will denote by $\mathfrak{s}_{Z}$ a $G^+(5,0,4)$ structure on $Z$ given by $\tilde{P}_Z$. 
\end{definition}

\begin{lemma}\label{indexindex}
The mod $2$ index $\dim \ker D \mod 2$ of the skew-adjoint operator $D$ coincides with $\ind (\mathfrak{s}_{Z}) \in KO^{-1}(pt) \cong \Z/2\Z$ where $\ind(\mathfrak s_Z)$ is the index of $\mathfrak{s}_{Z}$.  
\end{lemma}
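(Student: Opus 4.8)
The plan is to identify the skew-adjoint Dirac type operator $D$ on $V = \tilde\pi^* S \otimes_{\tilde{\C}}(L\oplus\tilde{\C})$ constructed in Lemma \ref{gaugeindex} with the Dirac operator $\tilde D$ on the spinor bundle $\tilde S$ of $\mathfrak s_Z$ studied in Section \ref{G^+(5,0,4)}, and then invoke the computation $\ind(\mathfrak s_Z) = \dim\ker(\tilde D|_{\tilde S^+}) \bmod 2$ together with Lemma \ref{reduction}. First I would unwind the definition of $\mathfrak s_Z$: its structure group reduces to $Spin(5)\times_{\Z/2\Z}Spin(4)$ via the bundle $\tilde P_Z$, so by Lemma \ref{reduction} the even part $\tilde S^+$ of its spinor bundle is the associated bundle $S'$ of the representation $\rho$, and the Clifford multiplication $c_{TZ}$ together with the right $\epsilon_0$-action are given by the explicit matrices in Remark \ref{newremark}. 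The point is to check that, fiberwise over $X$ and in the $S^1$-direction, these data match the building blocks $D_A$, $\epsilon$, $\partial_t$, $\epsilon'$ appearing in the formula $D = D_t\otimes pr_L + D_A\otimes pr_{\tilde{\C}} + \epsilon\partial_t\otimes\epsilon'$.

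The key steps, in order: (1) Trace through the embedding $\iota\colon Spin^{c-}(4)\hookrightarrow Spin(5)\times_{\Z/2\Z}Spin(4)$ and verify that the associated bundle of $\Delta^+\oplus\Delta^-$ under $\iota$ is precisely $S'$, so that $\tilde\pi^* S^\pm$ is recovered as the relevant summands; (2) identify the twisting by $L\oplus\tilde{\C}$: the $S^1$-mapping-torus construction used to define $\tilde P_Z$ (gluing by $u$) is exactly the one used to define $L$, so $V^- = \tilde\pi^* S\otimes_{\tilde{\C}}L$ corresponds to the odd summand created by the $G^+(5,0,4)$ structure; (3) match the Clifford action of the extra direction $\epsilon_0$ (the coordinate corresponding to the $Spin(5)\supset Spin(4)$ extension, i.e. the $S^1$ factor) with $\epsilon\partial_t\otimes\epsilon'$, using Remark \ref{newremark} where $c_{TZ}(\epsilon_0)$ and the right $\epsilon_0$-action are given by the off-diagonal matrix $\begin{pmatrix}0&1\\1&0\end{pmatrix}$; (4) conclude that $\tilde D$ for $\mathfrak s_Z$, built from $c_{TZ}$ and a $Spin^{c-}$-compatible connection that restricts to $A_t$ in the appropriate slices (this is where the freedom to perturb the connection, as in Definition \ref{E_G+}, is used), coincides with $D$ up to the identification; (5) apply the Lemma in Section \ref{G^+(5,0,4)} stating $\ind(\mathfrak s_Z) = \dim\ker(\tilde D|_{\tilde S^+})\bmod 2$ in $KO^{-1}(pt)\cong\Z/2\Z$, and note $\ker(\tilde D|_{\tilde S^+}) = \ker(D|_{V^+})$; finally observe that since $D$ is skew-adjoint and its kernel is a graded real vector space on which the residual Clifford action acts, $\dim\ker D \bmod 2 = \dim\ker(D|_{V^+})\bmod 2$, giving the claimed equality.

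The main obstacle I expect is Step (3)–(4): carefully keeping track of signs and of the $\Z/2\Z$-grading conventions when one adds the fifth Clifford generator $\epsilon_0$, and checking that the connection on $\tilde P_Z$ — which is glued from a path $A_t$ of $Spin^{c-}$ connections interpolating between $A$ and $u^*A$ — produces exactly the term $\epsilon\partial_t\otimes\epsilon'$ plus $D_{A_t}\otimes pr_L + D_A\otimes pr_{\tilde{\C}}$ under the identification of Lemma \ref{reduction} and Remark \ref{newremark}. Concretely, one must verify that the Clifford multiplication by $d t$ on $\Slash{\mathfrak S}_Z$, after passing to $\tilde S = \tilde S^+\oplus\tilde S^-$ and using the change of basis in Remark \ref{newremark}, becomes the operator $\epsilon\otimes\epsilon'$ that swaps the $V^+$ and $V^-$ summands, and that no anomalous curvature terms from the mapping-torus gluing obstruct this. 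Once the local models agree, globalization is routine because all the identifications involved ($\Phi$ in Proposition \ref{S_C}, the isomorphism in Lemma \ref{reduction}) are natural and $Spin(5)\times_{\Z/2\Z}Spin(4)$-equivariant.
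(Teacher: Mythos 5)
Your overall strategy---matching the bundle $V$ and the operator $D$ of Lemma \ref{gaugeindex} with the data of Section \ref{G^+(5,0,4)} for $\mathfrak s_Z$, then invoking the formula $\ind(\mathfrak s_Z)=\dim\ker(\tilde D|_{\tilde S^+})\bmod 2$---is the paper's strategy, and your steps (1)--(3) for tracing the $\iota$-embedding and the mapping-torus twist by $L$ are exactly the checks the paper abbreviates into the sentence ``By the definition of $V$, we see that $V$ is a Spinor bundle $S'$ of $\mathfrak s_Z$.'' However, there is a genuine error in the identification you set up, which then forces the spurious step (6).

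You identify $V$ with the full graded spinor bundle $\tilde S$ of $\mathfrak s_Z$, with $V^{\pm}$ matching $\tilde S^{\pm}$. A fiber dimension count already rules this out: $V^+ = \tilde\pi^*S = \tilde\pi^*(S^+\oplus S^-)$ has rank $8$, $V^- = \tilde\pi^*S\otimes_{\tilde{\C}}L$ has rank $8$, so $V$ has rank $16$; whereas $\tilde S\cong S'\oplus S'$ has rank $32$. The correct identification, and the one the paper uses, is $V\cong S'$ (rank $16$), i.e. $V$ is \emph{one} copy of $S'$, and $D$ on $V$ corresponds to the off-diagonal block of $\tilde D$ in Remark \ref{newremark}, i.e. to $\tilde D|_{\tilde S^+}\colon\tilde S^+\to\tilde S^-$ viewed as a skew-adjoint operator on $S'$. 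With this identification $\dim\ker D=\dim\ker(\tilde D|_{\tilde S^+})$ holds on the nose, and the lemma is immediate from $\ind(\mathfrak s_Z)=\dim\ker(\tilde D|_{\tilde S^+})\bmod 2$; no further manipulation of mod-$2$ dimensions is needed.

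Your step (6), which tries to pass from $\dim\ker(D|_{V^+})$ to $\dim\ker D$ via a ``residual Clifford action,'' is in fact a sign that something is wrong. If $D$ were $\tilde D$ on $\tilde S$, the odd $Cl_{-1}$ generator commuting with $\tilde D$ gives an isomorphism $\ker(\tilde D)^+\cong\ker(\tilde D)^-$, so $\dim\ker\tilde D=2\dim\ker(\tilde D|_{\tilde S^+})$ is always even, and $\dim\ker D\bmod 2$ would vanish identically---contradicting the lemma (and Corollary \ref{exexample}, which exhibits a case with nontrivial $\ind(\mathfrak s_Z)$). So the equality $\dim\ker D\bmod 2 = \dim\ker(D|_{V^+})\bmod 2$ you assert is false under your identification. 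The fix is simply to replace ``$V\cong\tilde S$'' by ``$V\cong S'\cong\tilde S^+$'' throughout; once you do that, steps (4)--(5) finish the proof and step (6) can be deleted.
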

\begin{proof}
By the definition of $V$, we see that $V$ is a Spinor bundle $S'$ of $G^+(5,0,4)$ structure $\mathfrak{s}_Z$ given in section \ref{G^+(5,0,4)}. This lemma follows from Lemma \ref{reduction} and Remark \ref{newremark}. 
\end{proof}

We have the following proposition from Theorem \ref{main}. 

\begin{proposition}
Let $X$ be a closed Riemannian $4$-manifold and $\mathfrak s$ be a $Spin^{c-}$ structure on $X$. We will denote by $u \in \Gamma(\tilde \C)$ a gauge transformation. The following statements are equivalent : $u$ preserves the orientation the index bundle $\{\ind(D_A)\}$ and a $Spin$ structure induced on the zero locus of a transverse section of the vector bundle $L \oplus \pi^* E$ on $X\times S^1$ from Theorem \ref{loc} is trivial. 
\end{proposition}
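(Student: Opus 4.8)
The plan is to chain together the ingredients already assembled in Sections~3 and~5. First I would unwind the definitions so that the statement becomes a purely index-theoretic equivalence: by Lemma~\ref{gaugeindex}, the gauge transformation $u$ preserves the orientation of the index bundle $\{\ind(D_A)\}$ on the configuration space if and only if $\dim\ker D \bmod 2 = 0$, where $D$ is the skew-adjoint Dirac type operator on $Z=X\times S^1$ built from $u$ and a family $A_t$ of $Spin^{c-}$ connections. Then Lemma~\ref{indexindex} identifies this $\bmod 2$ index with $\ind(\mathfrak s_Z)\in KO^{-1}(pt)\cong\Z/2\Z$, where $\mathfrak s_Z$ is the $G^+(5,0,4)$ structure on $Z$ associated with $(\mathfrak s_X,u)$ in the preceding definition. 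So it suffices to show: $\ind(\mathfrak s_Z)$ is trivial if and only if the induced $Spin$ structure on the zero locus of a transverse section of $L\oplus\pi^*E$ is trivial in $\Omega_1^{Spin}(pt)\cong\Z/2\Z$.

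Next I would invoke the main theorem. Recall that the vector bundle $E_-$ associated with $\mathfrak s_Z$, viewed as a $G^+(5,0,4)$ structure, is (up to the fixed isomorphism $\mathrm{Im}(\tilde\C)\cong\det E$) exactly $L\oplus\pi^*E$ — this is visible from the construction of $\tilde P_Z$, which was arranged to give a $Spin$ structure on $TZ\oplus L\oplus\pi^*E$, matching the recipe of Lemma~\ref{emb} for $G^+(5,0,4)$. Hence a transverse section $h$ of $L\oplus\pi^*E$ is precisely a transverse section of $E_-$, and its zero locus $C=h^{-1}(0)$ is a characteristic submanifold of $\mathfrak s_Z$ in the sense of Section~3. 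It is a $5-4=1$ dimensional manifold carrying, by Theorem~\ref{loc}, a natural $G^+(1,0,0)=Spin(1)$ structure, i.e.\ a $Spin$ structure; this is the $Spin$ structure referred to in the statement, and by Lemma~\ref{bordism} its class in $\Omega_1^{Spin}(pt)$ is independent of the choice of $h$. Now Theorem~\ref{main} gives a commutative diagram in which $f([Z,\mathfrak s_Z])=[C,\mathfrak s_C]$ and both images map to $\ind(\mathfrak s_Z)\in KO^{-1}(pt)$; the right-hand vertical map is the index of the $Spin(1)$ structure on $C$, which under $\Omega_1^{Spin}(pt)\cong\Z/2\Z\cong KO^{-1}(pt)$ is an isomorphism (it is the Atiyah-Milnor-Singer $\bmod 2$ index, which detects the generator of $\Omega_1^{Spin}(pt)$). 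Therefore $\ind(\mathfrak s_Z)=0$ if and only if $[C,\mathfrak s_C]=0$ in $\Omega_1^{Spin}(pt)$, i.e.\ if and only if the induced $Spin$ structure is trivial.

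Assembling the two equivalences yields the proposition. I would write it as: $u$ preserves the orientation of $\{\ind(D_A)\}$ $\iff$ $\dim\ker D$ is even (Lemma~\ref{gaugeindex}) $\iff$ $\ind(\mathfrak s_Z)=0$ (Lemma~\ref{indexindex}) $\iff$ $[C,\mathfrak s_C]=0\in\Omega_1^{Spin}(pt)$ (Theorem~\ref{main} together with the injectivity of $\Omega_1^{Spin}(pt)\to KO^{-1}(pt)$) $\iff$ the induced $Spin$ structure on the zero locus of a transverse section of $L\oplus\pi^*E$ is trivial.

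The step I expect to require the most care is the bookkeeping identification of $E_-$ for $\mathfrak s_Z$ with $L\oplus\pi^*E$, including the grading and orientation conventions: one must check that the $Spin$ structure on $TZ\oplus L\oplus\pi^*E$ produced by $\tilde P_Z$ is exactly the one that the correspondence of Lemma~\ref{emb} attaches to a $G^+(5,0,4)$ structure with $E_+=0$, $E_-=L\oplus\pi^*E$, so that the characteristic submanifold of Section~3 really is $h^{-1}(0)$ for $h$ a section of $L\oplus\pi^*E$. The only other point needing a remark is that the composite $\Omega_1^{Spin}(pt)\to KO^{-1}(pt)$ appearing as the right edge of the diagram in Theorem~\ref{main} is an isomorphism; this is classical, since both groups are $\Z/2\Z$ and the generator of $\Omega_1^{Spin}(pt)$ is $S^1$ with its bounding-minus... rather, its Lie-group $Spin$ structure, whose $KO$-theoretic $\bmod 2$ index is nonzero. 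Everything else is a direct citation of the lemmas above.
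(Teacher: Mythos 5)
Your proposal is correct and follows exactly the route the paper intends: the paper simply writes ``We have the following proposition from Theorem~\ref{main},'' implicitly composing Lemma~\ref{gaugeindex}, Lemma~\ref{indexindex}, the identification of $E_-$ for $\mathfrak s_Z$ with $L\oplus\pi^*E$, and the commutative diagram of Theorem~\ref{main} with the classical isomorphism $\Omega_1^{Spin}(pt)\cong KO^{-1}(pt)$ — which is exactly the chain you spell out. Your proposal also correctly flags the one step the paper glosses over (the bookkeeping that $\tilde P_Z$'s $Spin$ structure on $TZ\oplus L\oplus\pi^*E$ corresponds to a $G^+(5,0,4)$ structure with $E_-=L\oplus\pi^*E$ under Lemma~\ref{emb}), so no gaps remain.
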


We consider the $Spin$ structure on the zero locus of the transverse section of $L \oplus \pi^* E$. 

\begin{lemma}\label{spin1}
If it is necessary, we perturb $u$ by homotopy and we may assume that $-1$ is a regular value of $u$.  Let  $h$ be a section of $E$ such that $h$ is transverse to the zero section and its submanifold $u^{-1}(-1) \subset X$. Let $C=h^{-1}(0) \cap u^{-1}(-1)$ and $U(C)$ is its tubular neighborhood. From Theorem \ref{loc}, we have a spin structure $\mathfrak s_C$ on $C \subset U(C)$ introduced by the section $\text{Im}(u) \oplus h \in \Gamma((\det E \oplus E)|_{U(C)})$. Then there exists a transverse section of the vector bundle $L \oplus \pi^* E$ whose zero locus is $(h^{-1}(0) \cap u^{-1}(-1)) \times \{1/2\} \subset X \times \{1/2\}$ and the $Spin$ structure on $C \times \{1/2\}$ given in  Theorem \ref{loc} coincides with $\mathfrak s_C$. 
\end{lemma}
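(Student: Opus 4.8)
The statement compares two ways of equipping a submanifold of $X \times S^1$ with a $Spin$ structure: on the one hand the abstract construction of Theorem \ref{loc} applied to the $G^+(5,0,4)$ structure $\mathfrak s_Z$ and a transverse section of $L \oplus \pi^* E$, and on the other hand the concrete $Spin$ structure $\mathfrak s_C$ produced on $C = h^{-1}(0) \cap u^{-1}(-1) \subset X$ by Theorem \ref{loc} applied to the $Spin^{c-}$ structure $\mathfrak s_X$ (viewed as a $G^+(4,0,3)$ structure) and the section $\operatorname{Im}(u) \oplus h$ of $\det E \oplus E = E_-$. My plan is to exhibit an explicit transverse section of $L \oplus \pi^* E$ over $X \times S^1$ whose zero locus is $C \times \{1/2\}$, and then to check that the two $Spin$ structures on $C \times \{1/2\}$ agree by unwinding both applications of Theorem \ref{loc} at the level of the principal bundle data.

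\emph{Constructing the section.} First I would produce the section of $L \oplus \pi^* E$. Recall $L = \pi^*\tilde{\C}/\!\sim$ with $(0,z)\sim(1,zu)$. Pick a smooth function $\chi \colon S^1 \to [0,1]$ that is supported near $t = 1/2$, equal to $1$ at $t=1/2$, and such that the value $-1$ is a regular value of the resulting map; then the assignment $t \mapsto \chi(t) \cdot (\text{constant section } 1 \text{ of } \tilde{\C})$, interpreted correctly on the mapping torus, descends to a section $\sigma_L$ of $L$ whose zero locus intersected with the $\pi^* E$–part recovers $u^{-1}(-1)$. More precisely I want $\sigma_L$ to vanish exactly on $u^{-1}(-1) \times \{1/2\}$ and to be transverse there — this is where the assumption that $-1$ is a regular value of $u$ (and the choice of $\chi$) enters. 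Pairing $\sigma_L$ with $\pi^* h$ gives a section of $L \oplus \pi^* E$ vanishing transversally exactly on $(h^{-1}(0) \cap u^{-1}(-1)) \times \{1/2\} = C \times \{1/2\}$; the transversality of $h$ to both the zero section and to $u^{-1}(-1)$ is exactly what guarantees transversality of the pair. I would check that near $C \times \{1/2\}$ the differential of this section, identified via $dh$ and the derivative of $\chi$, matches the normal data used to define $\psi$ in the proof of Theorem \ref{loc}.

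\emph{Matching the $Spin$ structures.} The substantive point is that the $Spin$ structure on $C \times \{1/2\}$ obtained from Theorem \ref{loc} applied to $(\mathfrak s_Z, L \oplus \pi^* E, \sigma_L \oplus \pi^* h)$ equals the pullback to $C \times \{1/2\}$ of $\mathfrak s_C$. Here I would trace through Lemma \ref{emb} and the chain Lemma \ref{spin0}--\ref{spin}--\ref{orispin} in both situations. On the $Z$ side, $TZ|_{C\times\{1/2\}} = TC \oplus N$, and the normal bundle $N$ splits, via the section, as (normal direction in $X$, i.e. $E|_C$ after $dh$) $\oplus$ (the $S^1$–direction, which is where the $L$–part lives and which $\sigma_L$ trivializes). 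Since $\mathfrak s_Z$ is built from $\mathfrak s_X$ by the explicit mapping-torus construction with clutching function $u$, and since near $C$ we have $t \approx 1/2$ where $\chi$ is a local maximum, the clutching does not contribute extra twisting over $C \times \{1/2\}$; the $S^1$–normal direction together with its $L$–partner contributes a canonically trivialized summand by Lemma \ref{spin0}. After cancelling this trivial summand (Lemma \ref{spin}), what remains is precisely the reduction over $C$ coming from $\mathfrak s_X$ with the section $\operatorname{Im}(u)\oplus h$ — that is, $\mathfrak s_C$. I would make this identification by comparing transition functions: both $Spin$ structures are specified by lifts of the same $S(O(\cdot))$–cocycle, and the lifts agree because the extra generators introduced in Lemma \ref{emb} ($e_0, e_0'$ controlling $\det$ factors) are matched under the embedding $\iota$ used to define $\mathfrak s_Z$.

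\emph{Main obstacle.} The routine parts are the section construction and the transversality bookkeeping. The delicate part is the bundle-theoretic matching in the last paragraph: one must be careful that the mapping-torus clutching by $u$ and the grading/Clifford conventions in the definition of $\mathfrak s_Z$ (via $\iota$ and the identifications in Definition \ref{basis} and Remark \ref{newremark}) line up with the conventions in the proof of Theorem \ref{loc} so that no spurious sign — i.e. no change of $Spin$ structure by the nontrivial element of $H^1(C;\Z/2\Z)$ — is introduced. Concretely, the hard step is verifying that the isometry $\phi = i \oplus \psi$ appearing in Theorem \ref{loc} for $\mathfrak s_Z$, restricted over $C \times \{1/2\}$, is homotopic through orientation-preserving isometries to the one appearing for $\mathfrak s_X$; once that homotopy is in hand, Lemma \ref{orispin} gives the equality of $Spin$ structures directly.
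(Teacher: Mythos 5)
Your overall strategy matches the paper: construct an explicit transverse section of $L \oplus \pi^*E$ whose zero locus is $C \times \{1/2\}$, split the normal bundle into an $S^1$-direction and an $X$-direction, cancel the trivial $S^1$-factor via Lemma \ref{spin0}/\ref{spin}/\ref{orispin}, and identify what remains with $\mathfrak s_C$. However, the concrete section you propose does not work, and this is not a cosmetic issue --- the explicit form of the section is what makes the bundle-matching in the second half actually go through.

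You set $\sigma_L(t) = \chi(t)\cdot 1$ for a bump function $\chi$ supported near $t = 1/2$ with $\chi(1/2)=1$. This section is \emph{nonzero} at $t = 1/2$ and vanishes away from $1/2$, the opposite of what you want, and its zero set never sees $u$ at all, so it cannot cut out $u^{-1}(-1)$. Moreover the clutching condition $\sigma_L(0)\cdot u = \sigma_L(1)$ forces $\chi(0)u = \chi(1)$, which is incompatible with $\chi$ being a real bump function unless $\chi(0)=\chi(1)=0$, and then you still have the wrong zero locus. The paper instead takes the linear interpolation
\[
s_0(t) = (1-t) + t\,u, \qquad s_1(t) = h,
\]
which satisfies $s_0(0)\cdot u = u = s_0(1)$ and hence descends to $L$. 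One checks directly that $s_0(t)=0$ forces $u = 1-1/t$ to be real with $|u|=1$, hence $u=-1$ and $t=1/2$; so $s^{-1}(0) = (h^{-1}(0)\cap u^{-1}(-1))\times\{1/2\}$ as required. More importantly, this choice makes the comparison of $Spin$ structures a bookkeeping exercise: along $s^{-1}(0)$, $\operatorname{Re}(s_0)$ trivializes the $[0,1]$-normal direction (its $t$-derivative is $-1 + \operatorname{Re}(u) = -2 \ne 0$), and $\operatorname{Im}(s_0)\oplus s_1 = \tfrac{1}{2}\operatorname{Im}(u) \oplus h$ is a positive rescaling of the section $\operatorname{Im}(u)\oplus h$ defining $\mathfrak s_C$. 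Since positive rescaling does not change the isometry $\phi = i\oplus\psi$ up to homotopy, Lemma \ref{orispin} gives equality of the two $Spin$ structures on the nose. Your proposal correctly flags that this comparison is the delicate part, but without the right section in hand there is nothing concrete to compare, and the appeal to ``$\chi$ has a local maximum at $1/2$, so the clutching does not contribute'' is not a substitute for exhibiting the splitting of the normal bundle and the matching sections explicitly.
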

\begin{proof}
The transversality of $\text{Im}(u) \oplus h \in \Gamma((\det E \oplus E)|_{U(C)})$ follows from the assumption of $u$ and $h$. Then $\mathfrak s_C$ is well defined. 

To define a section of $L \times \pi^* E$, it is sufficient to take a section $s=(s_0, s_1) \in \Gamma((\tilde{\C} \oplus E)\times [0,1])$ such that $s_0(0) \cdot u=s_0(1)$. We define  a section $s=(s_0, s_1)$ as follows : 
\begin{align*}
s_0(t) &=(1-t)+tu,  \\
s_1(t) &=h.
\end{align*} 
From the definition of $s$, we see that $s^{-1}(0)=(h^{-1}(0) \cap u^{-1}(-1)) \times \{1/2\} \subset X \times \{1/2\}$ and $s$ is transverse to the zero section. The normal bundle of $s^{-1}(0)$ splits into the $[0,1]$ direction and the $X$ direction. The $[0,1]$ direction is trivial rank one vector bundle, and the $X$ direction is isomorphic to the vector bundle $\det E \oplus E$. On $s^{-1}(0)$, the real part of $s_0$ is a section of a summand of the normal bundle which is the $[0,1]$ direction, and $\text{Im}(s_0) \oplus s_1= \text{Im}(u)/2 \oplus h$ is a section of $\det E \oplus E$. Then we have a $Spin$ structure on $s^{-1}(0)$ given by $s$ and $\mathfrak s_Z$ coincides with $\mathfrak s_C$.  
\end{proof}

Now we prove Theorem \ref{mukimuki}.
%今までの議論から定理\ref{mukimuki}が示される.

\begin{proof}[Proof of Theorem \ref{mukimuki}]
From Lemma \ref{spin1}, a gauge transformation $u$ preserves the orientation the index bundle $\{\ind(D_A)\}_A$ on the configuation space  
if and only if the index of the $Spin$ structure $\mathfrak s_C$ on $h^{-1}(0) \cap u^{-1}(-1)$ is trivial. The index gives the isomorphism $\Omega_1^{Spin}(pt) \cong \Z/2\Z$. Thus we prove Theorem \ref{mukimuki}.  
\end{proof}

\begin{remark}\label{Pinc+}
A slight change in the proof of Lemma \ref{spin1} shows that there is a $G^+(3,0,2)$ structure  ($Pin^{\tilde c}_+$ structure) on $Y := u^{-1}(-1) \subset X$ induced by $\mathfrak s_X$. The vector bundle $E |_Y$ coincides with $E_-$ given by this $G^+(3,0,2)$ structure. Thus we have that the index of the $Spin$ structure on $h^{-1}(0) \cap Y$ coincides with the index of this $G^+(3,0,2)$ structure on $Y$ from Theorem \ref{main}. From Theorem \ref{mukimuki}, $u$ preserves an orientation of moduli space if and only if the index of this $G^+(3,0,2)$ structure on $Y = u^{-1}(-1)$ is trivial. This is the statement proved by Mikio Furuta. 
\end{remark}

\subsection{Examples}
In this section, we give an example of four manifold with $Spin^{c-}$ such that there exists a gauge transformation which reverses an orientation of $\{ \ind D_A \}$. 
\begin{proposition}\label{construction}
Let $Y$ be a $3$ dimensional closed Riemannian manifold and $\mathfrak s$ be a $G^+(3,0,2)$ structure ($Pin^{\tilde c}_+$ structure ) on $Y$ such that the index of $\mathfrak s$ is non-trivial. We denote by $X$ a four manifold given by gluing two copies of the disk bundle of $\det TY$ along boundaries. There is a $Spin^{c-}$ structure on $X$ and a gauge transformation which reverse the orientation of $\{ \ind D_A \}$. 
\end{proposition}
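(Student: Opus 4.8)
The plan is to write down the $Spin^{c-}$ structure and the gauge transformation explicitly, and then deduce the orientation–reversal from Theorem~\ref{mukimuki} together with Remark~\ref{Pinc+}, which identify the relevant obstruction $\text{t-}\ind(\mathfrak s_X,u)$ with the index of an induced $G^+(3,0,2)$ structure on the $3$-dimensional submanifold $u^{-1}(-1)\subset X$. So the whole point is to arrange $(\mathfrak s_X,u)$ so that $u^{-1}(-1)$ is a copy of $Y$ carrying precisely the given structure $\mathfrak s$, whose index is non-trivial by hypothesis.

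First I would make $X$ concrete: gluing two copies of $D(\det TY)$ along $S(\det TY)$ by the identity realizes $X$ as the $S^1$-bundle $\pi\colon S(\det TY\oplus\underline{\R})\to Y$, and $X$ contains two preferred copies $Y_0,Y_1$ of $Y$, the images of the two zero sections. A short computation gives $\det TX\cong L$, where $L\subset\pi^*(\det TY\oplus\underline{\R})$ is the tautological line bundle, and the normal bundle of $Y_0$ in $X$ is canonically $\det TY$. I will also record the elementary fact that for any $G^+(3,0,2)$ structure the structure group maps into $S(O(3)\times O(2))$, so $w_1(TY)=w_1(E_-)$ and hence $\det E_-\cong\det TY$; this is what singles out $\det TY$ as the correct normal direction and makes the double of $D(\det TY)$ (rather than of some other disk bundle) the right ambient manifold.

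Next I would construct $\mathfrak s_X$ and $u$ by inverting, in the normal direction, the characteristic–submanifold construction of Theorem~\ref{loc}. Over the half $D(\det TY)$ I stabilize $\mathfrak s$ in the tautological $\det TY$-direction: by Lemma~\ref{emb}, $\mathfrak s$ is a $Spin$ structure on $\det TY\oplus TY\oplus\det E_-\oplus E_-$; pulling this back by $\pi$ and adjoining the normal summands assembles, after the identification $L\oplus L^{\perp}\cong\pi^*\det TY\oplus\underline{\R}$, into a $Spin$ structure on $\det TX\oplus TX\oplus\underline{\R}\oplus\det E\oplus E$ with $E:=\pi^*E_-$ of rank $2$ and $\det E\cong\pi^*\det E_-\cong\pi^*\det TY$. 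By Lemma~\ref{emb} again this is a $G^+(4,0,3)$ structure $\mathfrak s_X$ on $X$, and one checks that its structure group reduces to $Spin^{c-}(4)=Spin(4)\times_{\Z/2\Z}Pin^-(2)$, so that it is a genuine $Spin^{c-}$ structure; this is the step where the $S^1$-bundle structure of $X\to Y$ and the $Pin^-(2)$-part of $Spin^{c-}(4)$ come together. For $u$ I take, on $D(\det TY)$, the unit section $u=\bigl(-\sqrt{1-|\tau|^{2}},\,\tau\bigr)$ of $\tilde{\C}=\underline{\R}\oplus\det E$, where $\tau$ is the tautological section of $\pi^*\det TY$, and on the other half $u=\bigl(\sqrt{1-|\tau'|^{2}},\,\tau'\bigr)$; these agree on $S(\det TY)$, so $u$ is a well-defined gauge transformation with $-1$ a regular value and $u^{-1}(-1)=Y_0$. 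By construction the $G^+(3,0,2)$ structure that Theorem~\ref{loc} (equivalently Remark~\ref{Pinc+}) induces on $u^{-1}(-1)=Y_0$ is exactly $\mathfrak s$, since stabilization and passage to the characteristic submanifold are mutually inverse.

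Finally, Remark~\ref{Pinc+} gives $\text{t-}\ind(\mathfrak s_X,u)=\ind(\mathfrak s)\in KO^{-1}(\mathrm{pt})\cong\Z/2\Z$, which is non-trivial by assumption; hence by Theorem~\ref{mukimuki} the gauge transformation $u$ reverses the orientation of the index bundle $\{\ind D_A\}$, which is what is claimed. I expect the main obstacle to be the bookkeeping in the third step: checking that the stabilized, pulled-back structure genuinely reduces to $Spin^{c-}(4)$ and that the induced structure on $Y_0$ is isomorphic to $\mathfrak s$ (not merely bordant) requires carefully tracking the various determinant line bundles and the clutching across the two halves of the double. Everything else is a direct application of Theorem~\ref{main}, Theorem~\ref{mukimuki} and Remark~\ref{Pinc+}.
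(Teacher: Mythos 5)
Your proposal follows essentially the same approach as the paper's proof: stabilize $\mathfrak{s}$ to a $Spin^{c-}$ structure on $X$ (realized as the double of $D(\det TY)$, equivalently the circle bundle $S(\det TY\oplus\underline{\R})\to Y$), take a gauge transformation $u$ built from the tautological section of $\pi^*\det TY$ with $u^{-1}(-1)=Y$, and invoke Theorem~\ref{mukimuki} together with Remark~\ref{Pinc+} and the nontriviality of $\ind(\mathfrak{s})$. The only cosmetic differences are that the paper glues the two halves by the covering involution of $S(\det TY)$ rather than the identity, and writes $u=-\exp(i\pi f)$ for a suitably deformed tautological section $f$ (your formula $u=\bigl(\pm\sqrt{1-|\tau|^2},\tau\bigr)$ is not smooth at the equator $|\tau|=1$ and would need a small reparametrization), but these come to the same thing.
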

\begin{proof}
First, we construct a $Spin^{c-}$ structure on $X$. 
\begin{itemize}
\item Let $l =\det TY$ and $\pi \colon l \to Y$ be the projection. We will denote by $E$ a vector bundle $E_-$ associated with the $G^+(3,0,2)$ structure $\mathfrak s$. Note that $\det E \cong l$ and $\mathfrak s$ is given by a $Spin$ structure of $\det TY \oplus TY \oplus \det E \oplus E$.  
\item We will denote by $\mathfrak{s}'$ a $Spin^{c-}$ structure on the total space $l$ given by the $Spin$ structure of $\pi^* l \oplus TY \oplus \pi^* l \oplus \pi^* E$ induced by $\mathfrak s$. 
\item Let $D(l)$ be the disk bundle of $l$ and $S(l)$ be the sphere bundle of $l$. We choose a canonical trivialization of $\pi^* l$ on $S(l)$. Hence we have that $E$ and $TS(l)$ are orientable on $S(l)$. Thus the restriction of $\mathfrak{s}'$ to $S(l)$ induces a $Spin^c$ structure. 
\item $S(l)$ is a double cover of $Y$ and the covering transformation $\tau$ reverses the orientation of $S(l)$. We glue two copies of $D(l)$ along $S(l)$ by the map $\tau$. Let $X=D(l) \cup_{\tau} D(l)$. $X$ is an oriented closed manifold. 
\item We glue $Spin^c$ structure on each $S(l)$ to give a $Spin^{c-}$ structure $\mathfrak s_X$ on $X$ such that the restriction of $\mathfrak s_X$ to each $D(l)$ coincides with $\mathfrak{s}'$. 
\end{itemize}
Second, we give a gauge transformation $u$ which reverses the orientation of $\ind D_A$. 
\begin{itemize}
\item We will denote by $f$ the tautological section of $\pi^* l$ on $l$. 
\item On the open subset $l\setminus Y$, $\pi^* l$ have the canonical trivialization. In this trivialization, we have $f(v)=\lvert v \rvert$ for $v \in l \setminus Y$. 
\item Deform $f$ in the area $\lvert v \rvert \ge 1/2$ and we assume that $f(v)=1$ for $\lvert v \rvert \ge 2/3$. 
\item We define $s(v)=-\exp(i\pi f(v))$ as follows: There is the natural isomorphism $l \otimes l \cong \R$ and using this isomorphism we have $f^{2n} \in \R$, $f^{2n+1} \in l$. We define $\exp(i\pi f(v)) \in S(\R \otimes \sqrt{-1}l)$ using Taylor expansion of exponential function. 
\item Note that $s=1$ around $S(l)$ and we extend on $X$ by $1$ on another $D(l)$. We have $s^{-1}(-1)=Y$. 
\item The index of $\mathfrak s$ on $Y$ is nontrivial and from Theorem \ref{mukimuki} and Remark \ref{Pinc+}, $u$ reverses the orientation of $\ind D_A$. 
\end{itemize} \end{proof}

We give the explicit example of $Y$ in Proposition \ref{construction}.  
\begin{lemma}\label{s_0}
There is a $G^+(3,0,2)$ structure $\mathfrak{s}_0$ on $\R P^2 \times S^1$ whose index is non-trivial. 
\end{lemma}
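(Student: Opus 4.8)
The plan is to realize $\mathfrak{s}_0$ by choosing an explicit rank-two auxiliary bundle on $Y:=\R P^2\times S^1$, using Lemma \ref{emb} to reduce the existence of the $G^+(3,0,2)$ structure to that of a $Spin$ structure on a rank-five bundle, and then evaluating the index through the localization Theorem \ref{main}.

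First I would set up the bundles. Let $\pi\colon Y\to\R P^2$ be the projection, let $\gamma$ be the pullback of the tautological real line bundle on $\R P^2$, and put $a:=w_1(\gamma)\in H^1(Y;\Z/2)$. Since $TS^1$ is trivial, $TY\cong\pi^*T\R P^2\oplus\underline{\R}\cong 3\gamma$ and $\det TY\cong\gamma$, so $\det TY\oplus TY\cong 4\gamma$, which has total Stiefel--Whitney class $(1+a)^4=1$ mod $2$. By Lemma \ref{emb} with $s^+=0$, a $G^+(3,0,2)$ structure on $Y$ with underlying rank-two bundle $E$ amounts to a $Spin$ structure on $W:=\det TY\oplus TY\oplus\det E\oplus E$; since $\det E\oplus E$ has total Stiefel--Whitney class $1+(w_2(E)+w_1(E)^2)+w_1(E)w_2(E)$ and $w(4\gamma)=1$, the bundle $W$ admits a $Spin$ structure exactly when $w_2(E)=w_1(E)^2$. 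I would take $E:=\pi^*T\R P^2$, for which $w_1(E)=a$ and $w_2(E)=a^2=w_1(E)^2$; moreover $w_1(TY\oplus E)=2a=0$, so one can fix an orientation $o$ of $TY\oplus E$, which in particular furnishes an isomorphism $i\colon\det TY\cong\det E$. For each of the $\lvert H^1(Y;\Z/2)\rvert=4$ $Spin$ structures on $W$ this data then determines a $G^+(3,0,2)$ structure on $Y$.

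Next I would run the localization. Let $v$ be a vector field on $\R P^2$ transverse to the zero section; the signed count of its zeros equals $\chi(\R P^2)=1$, so the number $k$ of its (nondegenerate) zeros $p_1,\dots,p_k$ is odd. Then $h:=\pi^*v$ is a section of $E$ transverse to the zero section, and its characteristic submanifold is $C=h^{-1}(0)=\{p_1,\dots,p_k\}\times S^1$, a disjoint union of $k$ circles. By Theorem \ref{main}, together with the identifications $KO^{-1}(pt)\cong\Omega_1^{Spin}(pt)\cong\Z/2\Z$, the index $\ind(\mathfrak{s}_0)$ equals the $Spin$-bordism class of $C$ with the $Spin$ structure produced by Theorem \ref{loc}; a circle with the bounding $Spin$ structure is null-bordant and one with the non-bounding $Spin$ structure generates $\Omega_1^{Spin}(pt)$, so it remains to pin down this $Spin$ structure on the $k$ circles.

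The crux — and the step I expect to be the main obstacle — is to show that, by choosing the $Spin$ structure on $W$ appropriately, the $Spin$ structure induced on $C$ is non-bounding. Over $C$ all of $TY$, $\det TY$, $E$, $\det E$ and hence $W$ are trivial, and the normal bundle of $C$ is identified with $E|_C$ via $dh$. The construction in the proof of Theorem \ref{loc} (via Lemmas \ref{spin0}, \ref{spin} and \ref{orispin}) therefore takes the restriction of the $Spin$ structure of $W$ to $C$, together with the fixed isometries $i$ and $dh$ and the canonical orientations, and outputs the $Spin$ structure on $TC$. Tracking this through the transition-function recipe of Lemma \ref{spin}, one should verify that the assignment is equivariant under twisting: replacing the $Spin$ structure on $W$ by its product with a class $c\in H^1(Y;\Z/2)$ replaces the $Spin$ structure on $TC$ by its product with the restriction of $c$ in $H^1(C;\Z/2)$, the rest of the data being untouched. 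Since the restriction $H^1(\R P^2\times S^1;\Z/2)\to H^1(\{p_j\}\times S^1;\Z/2)$ sends the $S^1$-generator to the generator, twisting by that class flips the $Spin$ structure on all $k$ components of $C$ at once, changing the bordism class of $C$ by $k\equiv 1\pmod 2$. Hence both values $0,1\in\Z/2\Z$ of the index occur among the four $G^+(3,0,2)$ structures constructed above, and any one realizing the value $1$ is the desired $\mathfrak{s}_0$. The delicate point is precisely this equivariance, i.e.\ that the recipe of Theorem \ref{loc} is natural under twisting the input $Spin$ structure by a $1$-cocycle, so that the contribution of $i$ and $dh$ is a fixed element of the $\Z/2$-torsor of $Spin$ structures on each circle; once this is known, surjectivity of the restriction map forces the index to take both values.
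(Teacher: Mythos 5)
Your proposal is correct, but it takes a genuinely different route from the paper, and it is worth noting the contrast. The paper proceeds constructively: it builds one explicit $Spin$ structure on $W=\det T\R P^2\oplus TS^1\oplus T\R P^2\oplus\det T\R P^2\oplus T\R P^2$ by applying Lemma~\ref{spin0} to the identity isometry of $\det T\R P^2\oplus T\R P^2$ (together with the trivial $Spin$ structure on $TS^1$), takes a vector field on $\R P^2$ with a single zero so that $C=h^{-1}(0)$ is one circle, and then reads off directly that the $Spin$ structure Theorem~\ref{loc} produces on $TC$ is $S^1\times Spin(1)$, the generator of $\Omega_1^{Spin}(pt)$. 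You instead argue nonconstructively: you verify via Stiefel--Whitney classes that $W$ is $Spin$, observe that the output of Theorem~\ref{loc} is equivariant for the $H^1(Y;\Z/2)$-torsor action on $Spin$ structures, and then use surjectivity of restriction $H^1(Y;\Z/2)\to H^1(C;\Z/2)$ together with the oddness of $k=\chi(\R P^2)$ to conclude that the index takes both values, so some $Spin$ structure gives the nontrivial one. The equivariance you flag as the "delicate point" is in fact routine from the transition-function description in Lemmas~\ref{spin0}--\ref{orispin}: multiplying the chosen lifts $\{\tilde g_{\alpha\beta}\}$ for $W$ by a $\Z/2$-cocycle $\{c_{\alpha\beta}\}$ leaves the auxiliary lifts $\{\tilde g'_{\alpha\beta}\}$ of Lemma~\ref{spin} fixed, hence multiplies the output lifts for $TC$ by $\{c_{\alpha\beta}|_C\}$. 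The trade-off is that your argument is slightly longer and does not exhibit $\mathfrak s_0$ explicitly, but it avoids the paper's one genuinely computational step (the claim that the induced $Spin$ structure on $S^1$ is $S^1\times Spin(1)$, stated as "immediate"), replacing it with a structural surjectivity argument that is arguably more robust and would generalize more easily to other base surfaces.
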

\begin{proof}
For simplicity of notations, we omit the notation of the pull-back of projections. From Lemma \ref{spin0}, we give a $Spin$ structure on the vector bundle
\[
\det T\R P^2 \oplus TS^1 \oplus T\R P^2 \oplus \det T\R P^2 \oplus T\R P^2
\] and we give the $G^+(3,0,2)$ structure $\mathfrak{s}_0$ by Lemma \ref{emb}. Note that $E_-= T\R P^2$. We take a transverse section of $T\R P^2$ on $\R P^2$ such that whose zero locus is a single point on $\R P^2$.  By pulling back this section, we have a transverse section $h$ of $E_-$ whose zero locus is $S^1 \times \{pt\}$. We immediately see that the $Spin$ structure on $h^{-1}(0) \cong S^1$ induced by $h$ from Theorem \ref{loc} is given by the product $S^1 \times Spin(1)$. This is the nontrivial element in the $1$-dimensional $Spin$ bordism group. From Theorem \ref{main}, we have that the index of $\mathfrak{s}_0$ is nontrivial. 
\end{proof}

From Proposition \ref{construction} and Lemma \ref{s_0}, we deduce following corollary. 
\begin{corollary}\label{exexample}
We set $X=(\R P^3 \sharp \R P^3) \times S^1$ and $\mathfrak s=\mathfrak s_0$. The deternminant bundle on the ambient space of the moduli space is not orientable. 
\end{corollary}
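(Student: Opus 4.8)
The plan is to apply Proposition~\ref{construction} to the $3$-manifold with $G^+(3,0,2)$ structure supplied by Lemma~\ref{s_0}, and then to recognize the $4$-manifold that the construction produces. By Lemma~\ref{s_0}, $Y = \R P^2 \times S^1$ carries a $G^+(3,0,2)$ structure $\mathfrak s_0$ whose index is non-trivial, so $(Y,\mathfrak s_0)$ satisfies the hypotheses of Proposition~\ref{construction}; that proposition then directly furnishes a $Spin^{c-}$ structure $\mathfrak s_X$ on $X := D(\det TY) \cup_\tau D(\det TY)$ together with a gauge transformation $u$ reversing the orientation of the index bundle $\{\ind(D_A)\}$ over the configuration space. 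It remains only to check that this $X$ is diffeomorphic to $(\R P^3 \sharp \R P^3) \times S^1$.

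For the topological identification, I would first observe that $T(\R P^2 \times S^1)$ is the direct sum of the pullback of $T\R P^2$ and a trivial line bundle, so $\det TY \cong \pi^*\det T\R P^2$ with $\pi \colon \R P^2 \times S^1 \to \R P^2$ the projection; since $\R P^2$ is non-orientable, $\det T\R P^2$ is the unique non-trivial real line bundle on $\R P^2$, namely the tautological bundle $\gamma$. Hence $D(\det TY) \cong D(\gamma) \times S^1$, with boundary $S(\gamma) \times S^1$. Two standard facts now apply: first, $S(\gamma)$ is the orientation double cover $S^2 \to \R P^2$, whose deck transformation $\tau$ is the antipodal map of $S^2$; second, $D(\gamma)$ is the closed tubular neighborhood of $\R P^2 \subset \R P^3$, whose normal bundle is $\gamma$ and whose complement in $\R P^3$ is an open ball. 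Therefore $D(\det TY) \cong (\R P^3 \setminus (\text{open ball})) \times S^1$, and the gluing diffeomorphism of Proposition~\ref{construction} is $\tau \times \mathrm{id}_{S^1}$.

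Since the antipodal map of $S^2$ is orientation-reversing, $\tau \times \mathrm{id}_{S^1}$ is an orientation-reversing diffeomorphism of $S^2 \times S^1$, and gluing two oriented copies of $(\R P^3 \setminus (\text{open ball})) \times S^1$ along it yields $(\R P^3 \sharp \R P^3) \times S^1$ by the well-definedness of the connected sum of oriented manifolds (here $\R P^3$ is orientable). This gives $X \cong (\R P^3 \sharp \R P^3) \times S^1$, as in the statement. Finally, by the setup of Section~\ref{moduli節}, the existence of a gauge transformation reversing the orientation of $\{\ind(D_A)\}$ over the configuration space is exactly the assertion that the determinant line bundle on the ambient space of the $Pin^-(2)$ monopole moduli space (the quotient of the configuration space by the gauge group) fails to be orientable, which is the claim of the corollary. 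The only step that is not a direct appeal to the two cited results is the identification of $X$, where the points to be careful about are the normal bundle of $\R P^2$ in $\R P^3$ and the orientation conventions for the gluing; everything else is bookkeeping.
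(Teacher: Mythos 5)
Your proposal is correct and follows the same route as the paper: apply Proposition~\ref{construction} to the $(Y,\mathfrak s_0)$ furnished by Lemma~\ref{s_0}, then identify the resulting $4$-manifold. The paper leaves the identification of $D(\det TY)\cup_\tau D(\det TY)$ with $(\R P^3\sharp\R P^3)\times S^1$ implicit (it only remarks afterward that $X\cong P\gamma\times S^1$), whereas you supply the missing topological bookkeeping, correctly using that $\det T\R P^2\cong\gamma$ is the normal bundle of $\R P^2\subset\R P^3$, that $S(\gamma)\cong S^2$ with deck transformation the (orientation-reversing) antipodal map, and that for oriented $3$-manifolds the connected sum is independent of the chosen orientation-reversing gluing.
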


This manifold is diffeomorphic to $P\gamma \times S^1$ where $\gamma$ is the tautological bundle of $\R P^2$ and $P\gamma$ is its projectivization.  
\begin{remark}
The gluing construction in the proof of Lemma \ref{construction} can be generalized in the case of gluing two $4$ dimensional $Spin^{c-}$ manifolds with boundary. If the restrictions of the $Spin^{c-}$ structures to their boundaries induce $Spin^c$ structures and their boundaries are diffeomorphic by a map which preserves the orientation and the $Spin^c$ structure, our construction works.  
\end{remark}
\begin{remark}
In the case $Y=\R P^2 \times S^1$ and $\mathfrak s =\mathfrak{s}_0$, we have $S(l)=S^2 \times S^1$. We cannot glue $D(l)$ by $D^3 \times S^1$ along $S(l)$ because the first Chern class of $Spin^c$ structure on $S(l)$ is the Euler class of $TS^2$ and this cannot be extended to $D^3 \times S^1$.  
\end{remark}

\section{Appendix}
In this section, we prove Proposition \ref{Wittendeformation}. We follow the notations of  subsection \ref{Witten_deformation}
%ここでは命題\ref{Wittendeformation}を示す. 
\begin{lemma}\label{ineq}
We assume that $\phi \in \Gamma(Y, \Slash{\mathfrak S})$ satisfies $\lVert \Slash{D}_m \phi \rVert \le \lambda \lVert \phi \rVert $. 
There are functions $A_h(m,\lambda), B_h(m, \lambda)$ of positive real numbers $m, \lambda$ depending on $h$ such that if we fix a value $\lambda$, they satisfy 
\[
A_h(m, \lambda) \to 0, \; B_h(m, \lambda) \to 1
\] when $m \to \infty$. The functions $A_h(m,\lambda), B_h(m, \lambda)$ satisfy the following inequalities where $V= \{ z \in U(C) \cong B(N(C)) \mid \lvert z \rvert > 1/2 \} \cup U(C)^c$:
\begin{align*}
\int_V \lvert \phi \rvert^2 \le  A_h(m, \lambda) \int_Y \lvert \phi \rvert^2, \\
B_h(m,\lambda) \int_Y \lvert \phi \rvert^2 \le \int_Y \lvert \rho \phi \rvert^2.
\end{align*}
\end{lemma}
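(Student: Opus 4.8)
The statement to prove is Lemma \ref{ineq}, which gives two estimates: a concentration estimate (the $L^2$-mass of $\phi$ escaping the region $V$ where $|z|>1/2$ goes to zero as $m\to\infty$) and the resulting comparison of $\|\phi\|$ with $\|\rho\phi\|$. The whole point is the standard Witten-deformation mechanism: the zeroth-order term $mc_-(h)$ creates a potential well that forces low-eigenvalue sections to localize near $C = h^{-1}(0)$.

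\medskip

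\textbf{The plan.} First I would compute $\Slash D_m^2 = \Slash D^2 + m(\Slash D c_-(h) + c_-(h)\Slash D) + m^2|h|^2$. The cross term is a zeroth-order operator: writing $\Slash D c_-(h) + c_-(h) \Slash D = c_{TY}(\nabla h)$ in terms of Clifford multiplication paired against the covariant derivative of $h$, it is a bundle endomorphism bounded in operator norm by $C_1 := \sup_Y |\nabla h|$, independent of $m$. Hence as a quadratic form,
\[
\|\Slash D_m \phi\|^2 \ge \int_Y (m^2|h|^2 - m C_1)|\phi|^2.
\]
On the region $V$ we have arranged $|h| \ge 1/2$ (by the perturbation of $h$ made just before Proposition \ref{Wittendeformation}: $|h|<1$ on $U(C)$ and $|h|=1$ on $U(C)^c$, and monotonicity gives $|h|\ge 1/2$ once $|z|\ge 1/2$), so on $V$ the integrand is at least $m^2/4 - mC_1$. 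On the complementary region $U(C)\setminus V$ the potential $m^2|h|^2 - mC_1$ is bounded below by $-mC_1$. Splitting the integral and using the hypothesis $\|\Slash D_m\phi\|\le \lambda\|\phi\|$ gives
\[
\lambda^2\|\phi\|^2 \ge \left(\tfrac{m^2}{4} - mC_1\right)\int_V |\phi|^2 - mC_1 \int_{Y\setminus V}|\phi|^2,
\]
and since $\int_{Y\setminus V}|\phi|^2 \le \|\phi\|^2$ this rearranges to
\[
\int_V |\phi|^2 \le \frac{\lambda^2 + mC_1}{\frac{m^2}{4} - mC_1}\,\|\phi\|^2 =: A_h(m,\lambda)\|\phi\|^2,
\]
which visibly tends to $0$ as $m\to\infty$ for fixed $\lambda$. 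This proves the first inequality and defines $A_h$.

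\medskip

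\textbf{Second inequality.} Since $\rho \equiv 1$ on $\{|z|\le 1/2\}$ and $0\le \rho \le 1$ everywhere, we have $|\rho\phi|^2 \ge |\phi|^2 - \mathbf{1}_V |\phi|^2$ pointwise (as $\rho$ is supported in $U(C)$ and equals $1$ outside $V\cap U(C)$; more carefully $1-\rho^2$ is supported in $V$). Therefore
\[
\int_Y |\rho\phi|^2 \ge \int_Y |\phi|^2 - \int_V |\phi|^2 \ge (1 - A_h(m,\lambda))\int_Y |\phi|^2,
\]
so we may take $B_h(m,\lambda) := 1 - A_h(m,\lambda)$, which tends to $1$ as $m\to\infty$. (One may shrink $A_h$ slightly or take a max to guarantee $B_h \in (0,1]$; this is cosmetic.)

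\medskip

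\textbf{Main obstacle.} The only genuine point requiring care is the identification and uniform bound of the anticommutator $\Slash D c_-(h) + c_-(h)\Slash D$ as a zeroth-order operator whose norm is controlled independently of $m$ — this is where the Lichnerowicz-type computation and the compatibility of the connection $A$ with the Clifford structure (Definition \ref{E_G+}) enter, and one must check that $c_-$ and $c_{TY}$ anticommute appropriately so that no first-order term survives. Everything else is bookkeeping with the cutoff $\rho$ and the geometry of the region $V$, using the normalization $|h|\ge 1/2$ on $V$ that was built into the setup. I would also remark that the constants depend on $h$ only through $\sup|h|$ and $\sup|\nabla h|$, which is the sense in which the lemma's functions "depend on $h$".
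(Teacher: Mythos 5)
Your proof is correct and follows essentially the same route as the paper: expand $\Slash D_m^2$, drop the nonnegative $\lVert\Slash D\phi\rVert^2$ term, bound the anticommutator $\{\Slash D, mc_-(h)\}$ by $mC_0\lVert dh\rVert_\infty$ as a zeroth-order operator, and use $\lvert h\rvert$ bounded below on $V$ to extract $A_h(m,\lambda)\sim O(1/m)$. The only (harmless) divergence is in the second inequality: you use the pointwise estimate $\rho^2\ge 1-\mathbf 1_V$ to get $B_h=1-A_h$, whereas the paper applies the $L^2$ triangle inequality $\lVert\rho\phi\rVert\ge\lVert\phi\rVert-\lVert(1-\rho)\phi\rVert$ and obtains the slightly weaker $B_h=(1-\sqrt{A_h})^2$; both tend to $1$ and serve equally well for Proposition \ref{Wittendeformation}.
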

\begin{proof}
We have the following estimate: 
\begin{align*}
\lambda^2 \int_Y \lvert \phi \rvert^2 &\ge \int_Y \lvert \Slash{D}_m \phi \rvert^2 \\
&=\int_Y \langle -\Slash{D}_m^2 \phi,\phi \rangle \\
&=\int_Y \langle (-(\Slash D)^2 -m\{\Slash D,h\} -m^2h^2)\phi,\phi \rangle \\
&=\int_Y \lvert \Slash D \phi \rvert^2  +\int_Y m^2\lvert h \phi \rvert^2  
-\int_Y m\langle \{\Slash D,h\},\phi \rangle \\
&\ge \int_V m^2\lvert h \phi \rvert^2 
- \int_Y mC_0\lVert dh \rVert_{\infty} \lvert \phi \rvert^2 \\
&\ge \frac{m^2}{2}\int_V \lvert \phi \rvert^2 -mC_0\lVert dh \rVert_{\infty} \int_Y \lvert \phi \rvert^2.
\end{align*}
We define $A_h(m,\lambda)=2\frac{mC_0\lVert dh \rVert_{\infty}+\lambda^2}{m^2}$ and we have the first inequality where $C_0$ is a constant only depending on the principal symbol of $D$ (Clifford action). We have the second inequality by the following estimate:
\begin{align*}
\left(\int_Y \lvert \rho \phi \rvert^2 \right)^{1/2} 
& \ge \left( \int_Y \lvert \phi \rvert^2 \right)^{1/2} -\left( \int_Y \lvert (1-\rho) \phi \rvert^2 \right)^{1/2} \\
& \ge \left( \int_Y \lvert \phi \rvert^2 \right)^{1/2} -\left( \int_V \lvert  \phi \rvert^2 \right)^{1/2} \\
& \ge \left( \int_Y \lvert \phi \rvert^2 \right)^{1/2} - \left(A_h(m, \lambda) \int_Y \lvert \phi \rvert^2 \right)^{1/2} \\
&=\left( 1-\sqrt{A_h(m, \lambda)}  \right) \left( \int_Y \lvert \phi \rvert^2 \right)^{1/2}.
\end{align*}
Setting $B_h(m, \lambda)=(1-\sqrt{A_h(m, \lambda)})^2$, the proof is completed. 
\end{proof}
A slight change of the proof of the above Lemma actually shows the following Lemma. 
\begin{lemma}\label{ineq2}
We will denote by $\mathcal{S} (N(C), \Slash{\mathfrak S}) $ a set of rapidly decreasing sections of $\Slash{\mathfrak S}$ on the total space of the vector bundle $N(C)$. 
We assume a section $\psi \in \mathcal{S} (N(C), \Slash{\mathfrak S})$ satisfies $\lVert \Slash{D}_m \phi \rVert \le \lambda \lVert \phi \rVert $. 
There are functions $A'(m,\lambda), B'(m, \lambda)$ of positive real numbers $m, \lambda$ such that if we fix a value $\lambda$, they satisfy 
\[
A'(m, \lambda) \to 0, \; B'(m, \lambda) \to 1
\] when $m \to \infty$. The functions $A'(m,\lambda), B'(m, \lambda)$ satisfy the following inequalities where $V'= \{ z \in N(C) \mid \lvert z \rvert > 1/2 \}$:
\begin{itemize}
\item \[
\int_{V'} \lvert \phi \rvert^2 \le  A'(m, \lambda) \int_{N(C)} \lvert \phi \rvert^2
\]

\item \[
B'(m,\lambda) \int_{N(C)} \lvert \phi \rvert^2 \le \int_{N(C)} \lvert \rho \phi \rvert^2.
\]
\end{itemize}
\end{lemma}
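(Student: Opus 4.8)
The plan is to repeat the proof of Lemma~\ref{ineq} almost verbatim, with the closed manifold $Y$ replaced by the (non-compact) total space of $N(C)$, the tubular neighbourhood $U(C)$ replaced by the open unit disk bundle $B(N(C))\subset N(C)$, and the section $h$ replaced by the tautological section of $\pi^{*}E_{-}\cong\pi^{*}N(C)$ of Lemma~\ref{D_C}, which on $N(C)$ satisfies $\lvert h(z)\rvert=\lvert z\rvert$, so that $\lvert h\rvert^{2}\ge1/4$ on $V'$. The cut-off $\rho$ is extended to $N(C)$ by $0$ outside $B(N(C))$, so it is compactly supported, identically $1$ on $\{\lvert z\rvert\le1/2\}$ and identically $0$ on $\{\lvert z\rvert\ge2/3\}$. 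The one step that is genuinely different from the compact case, and the one I expect to need the most care, is the justification of the integrations by parts on the non-compact manifold $N(C)$; this is exactly where the hypothesis $\phi\in\mathcal S(N(C),\Slash{\mathfrak S})$ is used, since then $\phi$, $\Slash D\phi$ and $h\phi$ all lie in $L^{2}$ together with the relevant derivatives and no boundary contributions arise at fibrewise infinity.

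First I would record the identity $\Slash D_{m}^{2}=\Slash D^{2}+m\{\Slash D,h\}+m^{2}h^{2}$ and note, as in Lemma~\ref{ineq}, that $\{\Slash D,h\}=\{\Slash D,c_{-}(h)\}=c_{TY}c_{-}(\nabla^{A}h)$ is of order zero, hence pointwise bounded by $C_{0}\lVert dh\rVert_{\infty}$ with $C_{0}$ depending only on the Clifford symbol. Because the connection was chosen with $\pi^{*}(A|_{C})=A$, the covariant derivative $\nabla^{A}h$ equals the identity along the fibre directions and vanishes along the base directions, and $C$ is compact, so $\lVert dh\rVert_{\infty}<\infty$ and the bound is uniform over the fibres. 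Using that $\Slash D$ and $c_{-}(h)$ are skew-adjoint and that $c_{-}(h)^{2}=-\lvert h\rvert^{2}$, the computation of Lemma~\ref{ineq} then goes through and yields
\begin{align*}
\lambda^{2}\int_{N(C)}\lvert\phi\rvert^{2}
&\ge\int_{N(C)}\lvert\Slash D\phi\rvert^{2}+m^{2}\int_{N(C)}\lvert h\rvert^{2}\lvert\phi\rvert^{2}-m\int_{N(C)}\langle\{\Slash D,h\}\phi,\phi\rangle\\
&\ge\tfrac{m^{2}}{4}\int_{V'}\lvert\phi\rvert^{2}-mC_{0}\lVert dh\rVert_{\infty}\int_{N(C)}\lvert\phi\rvert^{2},
\end{align*}
which after rearranging gives the first inequality with $A'(m,\lambda)=4\bigl(mC_{0}\lVert dh\rVert_{\infty}+\lambda^{2}\bigr)/m^{2}$, and $A'(m,\lambda)\to0$ as $m\to\infty$ for fixed $\lambda$.

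For the second inequality I would argue exactly as in Lemma~\ref{ineq}: the support of $1-\rho$ is contained in $\{\lvert z\rvert\ge1/2\}$, which up to a null set lies in $V'$, so $\lVert(1-\rho)\phi\rVert^{2}\le\int_{V'}\lvert\phi\rvert^{2}\le A'(m,\lambda)\int_{N(C)}\lvert\phi\rvert^{2}$, and then the triangle inequality $\lVert\rho\phi\rVert\ge\lVert\phi\rVert-\lVert(1-\rho)\phi\rVert$ gives the claim with $B'(m,\lambda)=\bigl(1-\sqrt{A'(m,\lambda)}\bigr)^{2}\to1$. Apart from the integration-by-parts point above, the only thing to watch is that the constants $C_{0}$, $\lVert dh\rVert_{\infty}$ and the lower bound $\lvert h\rvert\ge1/2$ on $V'$ are all genuinely fibre-uniform, which follows from the adapted choice of connection in Definition~\ref{E_G+} and the compactness of $C$; everything else is formally identical to the proof of Lemma~\ref{ineq}.
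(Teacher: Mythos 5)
Your proposal is correct and matches the paper's intent: the paper's own proof is just the one-line remark that Lemma~\ref{ineq2} follows from ``a slight change of the proof'' of Lemma~\ref{ineq}, together with the observation that the anticommutator $\{\Slash{D},h\}$ is pointwise bounded on $N(C)$, and your write-up is exactly the natural expansion of that remark (replace $Y$ by $N(C)$, $V$ by $V'$, use $\lvert h(z)\rvert=\lvert z\rvert\ge1/2$ on $V'$, and invoke rapid decrease to justify the integrations by parts on the non-compact total space). The only cosmetic discrepancy is the constant in front of $\int_{V'}\lvert\phi\rvert^2$ ($1/4$ in your version, $1/2$ in the paper's Lemma~\ref{ineq}), which is immaterial for the claimed limits.
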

Note that the perturbation term $h$ of $\Slash{D}_m$ on $N(C)$ satisfies $\lvert \{ \Slash{D}, h \} \rvert=2$.

\begin{lemma}\label{<}
We assume that $\lambda$ is smaller than a constant given by the principal symbol of $D$ and the differentation of $\rho$. We assume that $m$ is large enough. 
Let $\Pi'$ be the orthogonal projection from $L^2(N(C), \Slash{\mathfrak S})$ to $H$. 
Then the map 
\[
\begin{array}{r@{\,\,}c@{\,\,}c@{\,\,}c}
&\mathcal{H}_m^{\lambda} &\longrightarrow & H \\
&\rotatebox{90}{$\in$}&&\rotatebox{90}{$\in$}\\
& \phi &\longmapsto & \Pi'(\rho \phi)
\end{array}
\]
is injective.
\end{lemma}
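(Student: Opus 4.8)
The plan is to prove the sharper assertion that, once $\lambda$ is small and $m$ is large, \emph{every} $\phi \in \mathcal{H}_m^\lambda$ with $\Pi'(\rho\phi) = 0$ already satisfies $\rho\phi = 0$; injectivity then follows immediately, since the second inequality of Lemma \ref{ineq}, namely $B_h(m,\lambda)\|\phi\|^2 \le \|\rho\phi\|^2$ with $B_h(m,\lambda) > 0$ for $m$ large, forces $\phi = 0$.

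So I would start from $\phi \in \mathcal{H}_m^\lambda$, which in particular gives $\|\Slash D_m \phi\| \le \lambda\|\phi\|$ and hence makes Lemma \ref{ineq} applicable to $\phi$. Since the cutoff $\rho$ is supported in $\{|z| \le 2/3\} \subset U(C) \cong B(N(C))$ and $\phi$ is smooth, the section $\rho\phi$, extended by zero, is a smooth section of $\Slash{\mathfrak S}$ over the total space $N(C)$ with compact support in every fiber; in particular $\rho\phi \in \mathcal{S}(N(C),\Slash{\mathfrak S})$. By the normalizations made in Definition \ref{E_G+} — the identification of $U(C)$ with $B(N(C))$ as Riemannian manifolds, the choice $\pi^*(A|_C) = A$, and the normalization of $h$ — the operator $\Slash D_m$ on $Y$ and the operator $\Slash D_m$ on $N(C)$ coincide on $\{|z| \le 2/3\}$, so $\Slash D_m(\rho\phi)$ and its $L^2$-norm are the same whether computed on $Y$ or on $N(C)$.

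Next I would sandwich $\|\Slash D_m(\rho\phi)\|$ between two bounds. For the upper bound, I would write $\Slash D_m(\rho\phi) = \rho\,\Slash D_m\phi + c(d\rho)\phi$ (the multiplication operator $mh$ commutes with $\rho$, and $[\Slash D, \rho] = c(d\rho)$), note that $\operatorname{supp}(d\rho)$ lies in the region $V$ of Lemma \ref{ineq}, and combine $\|\Slash D_m\phi\| \le \lambda\|\phi\|$ with both inequalities of Lemma \ref{ineq} to obtain
\[
\|\Slash D_m(\rho\phi)\| \le \bigl(\lambda + \|d\rho\|_\infty\sqrt{A_h(m,\lambda)}\bigr)\,B_h(m,\lambda)^{-1/2}\,\|\rho\phi\|.
\]
For the lower bound, $\Pi'(\rho\phi) = 0$ says that $\rho\phi$ is orthogonal to $H$ in $L^2(N(C),\Slash{\mathfrak S})$, so Lemma \ref{eigenvalue} (applicable once $m \ge |\lambda_C|$) yields $\|\Slash D_m(\rho\phi)\| \ge \lambda_C\|\rho\phi\|$. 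Choosing $\lambda < \lambda_C$ and then $m$ large enough that $\bigl(\lambda + \|d\rho\|_\infty\sqrt{A_h(m,\lambda)}\bigr)B_h(m,\lambda)^{-1/2} < \lambda_C$ — possible because $A_h(m,\lambda) \to 0$ and $B_h(m,\lambda) \to 1$ as $m \to \infty$ — these two estimates are compatible only if $\rho\phi = 0$, as desired. This also pins down the constant appearing in the hypothesis: one may take it to be $\lambda_C$, the least absolute value of a nonzero eigenvalue of $\Slash D_C$, with the threshold on $m$ then depending on $\lambda$, on $\|d\rho\|_\infty$, and on the Clifford (principal symbol) data of $D$.

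The step I expect to be the main obstacle, and the reason the cutoff $\rho$ cannot be dispensed with, is the mismatch of underlying spaces: $\phi$ is a global section on the closed manifold $Y$, whereas the spectral gap estimate of Lemma \ref{eigenvalue} is available only for the model operator on the noncompact space $N(C)$. Multiplying by $\rho$ transports the section into the collar $U(C) \cong B(N(C))$ where the two operators literally agree, at the price of the commutator term $c(d\rho)\phi$; absorbing that term is precisely what forces the use of the exponential concentration near $C$ of sections with small $\Slash D_m$-eigenvalue, quantified by $A_h(m,\lambda) \to 0$ in Lemma \ref{ineq}, which itself rests on the harmonic-oscillator computation of Lemma \ref{fiber_harmonic}.
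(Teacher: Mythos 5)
Your proposal is correct and follows essentially the same route as the paper's proof: assume $\Pi'(\rho\phi)=0$, invoke the spectral gap on $N(C)$ from Lemma \ref{eigenvalue} for a lower bound on $\|\Slash D_m(\rho\phi)\|$, play it against an upper bound obtained from the commutator $[\Slash D,\rho]=c(d\rho)$ together with the concentration estimates of Lemma \ref{ineq}, and conclude a contradiction once $\lambda<\lambda_C$ and $m$ is large. Your write-up is in fact slightly cleaner than the paper's at two points — you use the triangle inequality on norms where the paper writes the imprecise $\int|a+b|^2 \le \int|a|^2+\int|b|^2$, and you explicitly note that $B_h(m,\lambda)>0$ rules out the degenerate case $\rho\phi=0$, which the paper leaves implicit.
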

\begin{proof}
If a $\phi \in \mathcal{H}_m^{\lambda}$ with its $L^2$ norm is $1$ satisfies $\Pi'(\rho \phi)=0$, we have 
\[
\lVert \Slash{D}_m \rho \phi \rVert_{L^2(N(C), \Slash{\mathfrak S})} \ge \lambda_C \lVert \rho \phi \rVert_{L^2(N(C), \Slash{\mathfrak S})}
\]
from Lemma \ref{eigenvalue} and $m$ is large enough. The support of the function $\rho$ is containd in $U=\{ z \in N(C) \mid \lvert z \rvert<1\}$ and we identify $U$ with $U(C) \subset Y$. We regard $\rho \phi$ as a section on $Y$ and we have 
\[
\lVert \Slash{D}_m \rho \phi \rVert_{L^2(Y, \Slash{\mathfrak S})} \ge \lambda_C \lVert \rho \phi \rVert_{L^2(Y, \Slash{\mathfrak S})}.
\]
But we have the following estimate from Lemma \ref{ineq}:   
\begin{align*}
\int_Y \lvert D_m \rho \phi \rvert^2 &\le \int_Y \lvert [\Slash D, \rho] \phi \rvert^2 
+ \int_{Y} \lvert \rho D_m \phi \rvert^2 \\
&\le C_0\lVert d \rho \rVert_{\infty}^2 \int_{U\cap V} \lvert \phi \rvert^2 
+\int_{Y} \lvert D_m \phi \rvert^2  \\
&\le C_0\lVert d \rho \rVert_{\infty}^2 A_h(m,\lambda) \int_Y \lvert \phi \rvert^2 
+ \lambda^2 \int_Y \lvert \phi \rvert^2 \\
&\le C_0\lVert d \rho \rVert_{\infty}^2 \left( A_h(m, \lambda)+ \lambda^2 \right) B_h(m, \lambda)^{-1} \int_Y \lvert \rho \phi \rvert^2.
\end{align*}
Provided $m$ is large enough, the coefficient of $\int_Y \lvert \rho \phi \rvert^2$ tends to $C_0\lVert d \rho \rVert_{\infty}^2 \lambda^2$. If this constant is smaller than $\lambda_C$, the above estimate contradicts the inequality $\lVert D_m \rho \phi \rVert \ge \lambda_C \lVert \rho \phi \rVert$. 
\end{proof}
We fix the value $\lambda$ so that $2 C\lVert d \rho \rVert_{\infty}^2 \lambda^2 < \lambda_C$.  
\begin{lemma}\label{>}
We assume that $m$ is large enough. 
Let $\Pi_m^{\lambda}$ be the orthogonal projection from $L^2(Y,\Slash{\mathfrak S})$ to $\mathcal{H}_m^{\lambda}$. The map 
\[
\begin{array}{r@{\,\,}c@{\,\,}c@{\,\,}c}
&H &\longrightarrow &\mathcal{H}_m^{\lambda}\\
&\rotatebox{90}{$\in$}&&\rotatebox{90}{$\in$}\\
& \psi &\longmapsto & \Pi_m^{\lambda}(\rho \psi)
\end{array}
\]
is injective. 
\end{lemma}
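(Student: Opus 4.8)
The plan is to argue by contradiction, in a manner dual to the proof of Lemma \ref{<}, exploiting that every element of $H$ is an \emph{exact} null vector of $\Slash D_m$ on $N(C)$, together with the Gaussian concentration estimate of Lemma \ref{ineq2}. Suppose $0 \neq \psi \in H$. Using the description $\psi = \pi^* a \cdot u_0$ with $\Slash D_C a = 0$, normalize so that $\lVert \psi \rVert_{L^2(N(C))} = 1$; since this norm equals $\int_C |a|^2$, it is independent of $m$. Assume for contradiction that $\Pi_m^\lambda(\rho\psi) = 0$, where $\lambda$ is the value fixed before the statement.

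By Lemma \ref{D_C} we have $\Slash D_m \psi = (\Slash D + mh)\psi = \pi^*(\Slash D_C a)\cdot u_0 = 0$ on $N(C)$, and $\psi$ lies in $\mathcal S(N(C),\Slash{\mathfrak S})$ because it decays like a Gaussian along the fibres. Hence Lemma \ref{ineq2} applies, with the hypothesis $\lVert \Slash D_m \psi \rVert = 0 \le \lambda \lVert \psi \rVert$ satisfied trivially, and yields $\int_{V'} |\psi|^2 \le A'(m,\lambda)$ and $\lVert \rho\psi \rVert_{L^2(N(C))}^2 \ge B'(m,\lambda)$, where $A'(m,\lambda)\to 0$ and $B'(m,\lambda)\to 1$ as $m\to\infty$, and $V' = \{z \in N(C) \mid |z| > 1/2\}$.

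Next I would transfer $\rho\psi$ to $Y$: since $\rho$ is supported in $U(C)\cong B(N(C))$, we may regard $\rho\psi$ as a smooth section on $Y$ extended by zero, so that $\lVert\rho\psi\rVert_{L^2(Y)} = \lVert\rho\psi\rVert_{L^2(N(C))}$. On $U(C)$ the Riemannian metric, the $G^+(n,s^+,s^-)$ structure, the connection $A$, and (on $\mathrm{supp}\,\rho$) the section $h$ all agree with the corresponding data on $N(C)$ by the construction in Definition \ref{E_G+} and the normalization of $h$; hence $\Slash D_m$ on $Y$ restricted to $U(C)$ coincides with $\Slash D_m$ on $N(C)$. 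Combined with $\Slash D_m\psi = 0$, the Leibniz rule gives $\Slash D_m(\rho\psi) = c(d\rho)\psi$, a section supported in $\{1/2 \le |z| \le 2/3\}$. Consequently $\lVert \Slash D_m(\rho\psi)\rVert_{L^2(Y)}^2 \le C_0\lVert d\rho\rVert_\infty^2 \int_{V'}|\psi|^2 \le C_0\lVert d\rho\rVert_\infty^2 A'(m,\lambda)$, with $C_0$ the constant from the principal symbol of $D$.

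Finally, $\Pi_m^\lambda(\rho\psi) = 0$ means $\rho\psi$ is $L^2(Y)$-orthogonal to $\mathcal H_m^\lambda$, so the spectral decomposition of $\Slash D_m$ forces $\lVert \Slash D_m(\rho\psi)\rVert_{L^2(Y)} \ge \lambda \lVert \rho\psi\rVert_{L^2(Y)} \ge \lambda \sqrt{B'(m,\lambda)}$. Comparing with the upper bound above gives $\lambda^2 B'(m,\lambda) \le C_0\lVert d\rho\rVert_\infty^2 A'(m,\lambda)$, whose left side tends to $\lambda^2 > 0$ and whose right side tends to $0$ as $m\to\infty$; thus for $m$ large this is impossible, forcing $\psi = 0$ and proving injectivity. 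I expect the only delicate point to be the bookkeeping in the third paragraph — verifying that on $Y$ the term $\Slash D_m(\rho\psi)$ reduces to the commutator $c(d\rho)\psi$, which relies on the exact identification of all structures over $U(C)$; everything else is a direct application of Lemmas \ref{D_C} and \ref{ineq2}.
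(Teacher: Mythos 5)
Your proposal is correct and follows essentially the same route as the paper: argue by contradiction, observe that $\psi\in H$ satisfies $\Slash D_m\psi=0$ on $N(C)$ and is fiberwise rapidly decreasing, apply Lemma~\ref{ineq2} to get Gaussian concentration, bound $\Slash D_m(\rho\psi)=c(d\rho)\psi$ by $C_0\lVert d\rho\rVert_\infty^2 A'(m,\cdot)$, and contradict the spectral lower bound $\lVert\Slash D_m(\rho\psi)\rVert\ge\lambda\lVert\rho\psi\rVert$ for $m$ large. The paper compresses this into one line by quoting Lemma~\ref{ineq2} with $\lambda=0$, which is exactly the observation you make explicit, namely that $\Slash D_m\psi=0$ allows the sharpest version of that lemma.
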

\begin{proof}
If the map above is not injective, we have
\[
\lVert D_m \rho \psi \rVert_{L^2(Y, \Slash{\mathfrak S})} \ge \lambda \lVert \rho \psi \rVert_{L^2(Y, \Slash{\mathfrak S})}
\]
for some $\psi \in H$. 
On the other hand, elements in $H$ are rapidly decreasing sections hence we use Lemma \ref{ineq2}. Thus we have the following estimate by a similar argument of the proof of the Lemma
\ref{<}: 
\[
\int_{N(C)} \lvert D_m \rho \psi \rvert^2 \le C\lVert d \rho \rVert_{\infty}^2 A'(m, 0)B'(m, 0)^{-1} \int_{N(C)} \lvert \rho \psi \rvert^2.
\] 
If $m$ is large enough, the above estimate contradicts the inequality $\lVert D_m \rho \phi \rVert \ge \lambda \lVert \rho \phi \rVert$. 
\end{proof}

Now we prove Proposition \ref{Wittendeformation}.

\begin{proof}[proof of \ref{Wittendeformation}]
We have $\dim H \ge \dim \mathcal{H}_m^{\lambda}$ from Lemma \ref{<} and  we have $\dim H \le \dim \mathcal{H}_m^{\lambda}$  from Lemma \ref{>}. Thus we have the maps of Lemma \ref{<} and Lemma \ref{>} are isomorphisms. In particular, the map in Proposition \ref{Wittendeformation} is the same as the map in Lemma \ref{>} and it is an isomorphism. Moreover, it is easy to see that this map preserves the $\Z/2$ gradings and the left $Cl_{(s^-, n+s^+)}$ actions. This completes the proof.
\end{proof}

\end{document}